\Crefname{equation}{}{}
\theoremstyle{plain}
\newtheorem{theorem}{Theorem}[section]
\newtheorem{proposition}[theorem]{Proposition}
\newtheorem{lemma}[theorem]{Lemma}
\newtheorem{claim}{Claim}
\newtheorem{remark}[theorem]{Remark}
\newcommand{\argmax}{\mathop{\mathrm{arg\,max}}\limits}
\newcommand{\conv}{\mathrm{conv}}
\newcommand{\diam}{\mathrm{diam}}
\newcommand{\R}{\mathbb{R}}
\newcommand{\ip}[2]{\langle #1, #2 \rangle}
\newcommand{\dist}{\mathrm{dist}}
\renewcommand{\beta}{\upbeta}
\renewcommand{\eta}{\upeta}
\renewcommand{\leq}{\leqslant}
\renewcommand{\geq}{\geqslant}
\DeclareMathOperator*{\argmin}{arg\,min}
\numberwithin{equation}{section}
\title{ Attention's forward pass and Frank-Wolfe }
\author{Albert Alcalde}
\affil{FAU Erlangen--N\"urnberg}
\author{Borjan Geshkovski}
\affil{Inria \& Sorbonne Université}
\author{Domènec Ruiz-Balet}
\affil{Université Paris-Dauphine}
\date{ \today }
\begin{document}

\maketitle

\begin{abstract}
We study the \emph{hardmax} limit of self-attention dynamics for token embeddings obtained in the zero-temperature ($\beta\to+\infty$) regime, and relate it to the finite-$\beta$ setting. In this limit, the update rule can be viewed as a Frank--Wolfe step for a quadratic objective over the convex hull of the current token embeddings. When the key-query matrix is negative semidefinite, the method linearly contracts all tokens to a single cluster at the origin. When it is positive semidefinite, extending the hardmax rule to the entire convex hull induces a Voronoi diagram: vertices are stationary, interior points remain in their initial cells, and each token moves along a straight line toward its cell’s vertex, yielding (super-)exponential convergence. As a byproduct, we also establish well-posedness of the associated ODE limit in this regime. Returning to the finite-$\beta$ regime, we model self-attention dynamics as a Markov chain and prove \emph{dynamic metastability}: with high probability, interior tokens reach near-vertex configurations in a constant number of steps and remain within a small neighborhood for times that grow exponentially in the inverse temperature $\beta$, before ultimately collapsing to the origin. Thus, the hardmax dynamics accurately approximate the finite-$\beta$ process over exponentially long time horizons.
\end{abstract}

\paragraph{Keywords} self-attention dynamics; Frank--Wolfe; Voronoi diagram; preconditioning; sparsity; metastability.

\setcounter{tocdepth}{2}
\tableofcontents

\section{Introduction}

Since their introduction in the groundbreaking work \cite{vaswani2017attention}, Transformers have been at the center of every major development in large language and foundation models. Various attempts have been made to analyze the inner functioning of these models. Here we focus on the question of signal propagation: given a trained Transformer and an arbitrary prompt, we study how information flows and is transformed across layers to produce the final representation. This follows a line of recent theoretical work that began with interpreting Transformers as interacting particle systems in \cite{lu2019understanding, sander2022sinkformers, geshkovski2023emergence, geshkovski2025mathematical}, and has since been developed extensively in subsequent studies\footnote{and aligns somewhat with empirical observations commonly discussed under the umbrella of "mechanistic interpretability"; see
\href{https://www.anthropic.com/research/mapping-mind-language-model}
{\color{blue}{https://www.anthropic.com/research/mapping-mind-language-model}.}}.

This question is not purely theoretical: it is well accepted that a major part of compute in large language models is expended during inference. As such, the a posteriori analysis of trained models offers a way to understand the representations they learn, with the practical aim of reverse-engineering modules of the complete architecture, as to reduce costs. While the multi-layer perceptron (MLP) component has been optimized and parallelized through adaptive mechanisms such as mixture-of-experts \cite{dai2024deepseekmoe}, the attention mechanism remains a significant bottleneck due to its $O(n^2)$ complexity in the number of tokens/context length $n$. In this paper, we investigate whether this computational burden can be mitigated in particular settings of parameters.

\subsection{Setup}

We consider encoder-only Transformers with a single head and without MLP components. Given a sequence of token embeddings \((x_i^0)_{i\in \llbracket 1, n\rrbracket}\in(\R^d)^n\), the $(t+1)$-th layer of the architecture is given by
\begin{equation} \label{eq:sa}
x^{t+1}_i = x_i^t + V^t \sum_{j=1}^n \frac{e^{\beta \langle B^t x_i^t, x^t_j\rangle}}{\displaystyle \sum_{k=1}^n e^{\beta \langle B^t x_i^t, x^t_k\rangle}} x^t_j,
\end{equation}
for all \(i \in \llbracket 1, n \rrbracket \coloneqq \{ 1, \dots, n \}\), where \(V^t\) and \(B^t\) are square parameter matrices, and \(\beta > 0\) is fixed. In the literature (\cite{sander2022sinkformers, geshkovski2025mathematical, geshkovski2023emergence}), \eqref{eq:sa} is referred to as the \emph{self-attention} model.
In practical implementations, the matrix \(B^t\) is typically parameterized as \((Q^t)^\top K^t\), where \(K^t\) and \(Q^t\) are (possibly low-rank) matrices referred to as the \emph{key} and \emph{query} matrices, respectively---in this regard, $V^t$ is called the \emph{value} matrix. We shall therefore call $B^t$ the \emph{key-query matrix}.

Due to the sign of the eigenvalues of the value matrix \(V^t\), the iteration \eqref{eq:sa} may diverge exponentially to \(\pm\infty\). In practice, tokens are therefore renormalized at each step to lie on the unit sphere for simplicity; this procedure is known as \emph{layer normalization}. 
Inspired by \cite{geshkovski2023emergence}, we consider a proxy for layer normalization that is easier to analyze, particularly in discrete time:
\begin{equation*}
    x^{t+1}_i \leftarrow (\mathsf{R}^t)^{-1} x^{t+1}_i,
\end{equation*}
where \(\mathsf{R}^t = I_d + V^t\). Then, the renormalized dynamics read
\begin{equation} \label{eq:sm-att-norm}
    x^{t+1}_i = (I_d + V^t)^{-1} x^t_i + (I_d + V^t)^{-1} V^t \sum_{j = 1}^n \frac{e^{\beta \ip{B^t x_i^t}{x^t_j}}}{\displaystyle\sum_{k = 1}^n e^{\beta \ip{B^t x_i^t}{x^t_k}}}x^t_j.
\end{equation}
We ought to ensure the invertibility of $I_d + V^t$ for all $t\geq 0$. This is satisfied if $V^t$ is diagonalizable and $-1$ is not one of its eigenvalues (seen as a consequence of Woodbury's matrix formula). Since 
$$(I_d + V^t)^{-1} + (I_d + V^t)^{-1} V^t = (I_d + V^t)^{-1} (I_d + V^t) = I_d,$$
we rewrite \cref{eq:sm-att-norm} as
\begin{equation} \label{eq: rescaled.Tformers}
    x^{t+1}_i = x^t_i + (I_d + V^t)^{-1} V^t \Bigg( \sum_{j = 1}^n \frac{e^{\beta \ip{B^t x_i^t}{x^t_j}}}{\displaystyle\sum_{k = 1}^n e^{\beta \ip{B^t x_i^t}{x^t_k}}}x^t_j - x_i^t\Bigg).
\end{equation}

\subsection{Contributions and outline}

The principal objective of the present paper is to study the behavior of token embeddings $x_i^t$---henceforth referred to as \emph{particles}---following the equation obtained by taking the formal singular limit $\beta\to+\infty$ (keeping $d, n$ fixed), and to relate this knowledge to the case $\beta<+\infty$. 
The limit equation reads
    \begin{equation}\label{eq: hardmax.dynamics.V}
        x_i^{t+1} = x_i^t + (I_d+V^t)^{-1}V^t \left(\frac{1}{\#\mathscr{C}_i^t}\sum_{y\in\mathscr{C}_i^t} y-x_i^t\right),
    \end{equation}
    where 
    \begin{equation*}
        \mathscr{C}_i^t \coloneqq \left\{y\in \{x_j^t\}_{j\in\llbracket 1, n\rrbracket}\colon \langle B^t x_i^t, y\rangle = \max_{z \in \{x^t_j\}_{j\in \llbracket 1, n\rrbracket}}\langle B^t x_i^t, z \rangle\right\}.
    \end{equation*}
    We organize the paper as follows.

\begin{itemize}[label=\LARGE\textbullet]
  \item In {\bf \Cref{sec: derivations}}, 
  after showing that $\mathscr{C}_i^t$ is generically a singleton, we view \eqref{eq: hardmax.dynamics.V} as a Frank-Wolfe update for a quadratic objective when $B^t$ is symmetric. Recall that in the setup of a convex function $\mathsf{J}:\mathcal{K}\to\R$ on a compact convex set $\mathcal{K}\subset\R^d$, the Frank-Wolfe method \cite{frank1956algorithm}, \cite[Chapter 9]{bach2024learning} with step-size $\gamma^t\in(0, 1)$ minimizes $\mathsf{J}$ by using a linear oracle as 
    \[
        z^{t+1} = (1-\gamma^t) z^t + \gamma^t \underset{y \in \mathcal{K}}{\argmin}\ip{\nabla \mathsf{J}(z^t)}{y}.
    \]
    We also discuss an interpretation of the matrix multiplier in \eqref{eq: hardmax.dynamics.V} as a pre-conditioner ({\Cref{sec: preconditioner}}), since throughout the subsequent analysis, we will exclusively focus on the case $V^t=h^t I_d$ with $h^t>0$. This yields 
    \begin{equation} \label{HSA}
    x^{t+1}_i = x_i^t + \gamma^t \left( \underset{y \in \mathcal{K}^t}{\argmax} \ip{B^t x_i^t}{y} - x_i^t\right) \tag{SA$_{\infty}$}
    \end{equation} 
    where $\gamma^t=h^t/(1+h^t)$ and $\mathcal{K}^t\coloneqq\conv\{x_i^t\}_{i\in\llbracket 1, n\rrbracket}$.  
    
    \item In {\bf\Cref{sec: B.nsd}}, we focus on $-B^t\succcurlyeq0$ in \eqref{HSA}. We recover and extend results on Frank-Wolfe for convex objectives \cite{frank1956algorithm, jaggi2013revisiting, bach2024learning}, obtaining convergence to a single cluster at the origin with a linear rate ({\it\Cref{thm: fw.cluster}}). 

    \item In {\bf\Cref{sec: B.psd}}, we focus on the case $B^t\equiv B \succcurlyeq 0$, where \eqref{HSA} is a Frank-Wolfe update for a concave objective. Since existing theory only provides coarse bounds on the duality gap, we instead focus on an \emph{ad-hoc} analysis. By extending the definition of $\mathscr{C}_i^t$ to the entire convex hull, we obtain a Voronoi tessellation of cells, under generic assumptions on the vertices. It turns out that the vertices remain stationary over time in this setting, and interior points never leave the cell in which they start. As a result, each particle simply approaches the vertex of its cell along a straight line, leading to convergence with a super-exponential rate ({\it \Cref{thm: exp.fast.polytope}}). As a byproduct of the elementary geometric arguments, we also establish well-posedness for the ordinary differential equation analog of \eqref{HSA} in this setting ({\it \Cref{thm: ode}}), partially resolving an open question from \cite{geshkovski2025mathematical}.

    \item In {\bf\Cref{sec: metastability}}, we relate these insights to the case $\beta < +\infty$. We focus on $B^t \equiv I_d$ and $\gamma^t \equiv \gamma$ to avoid additional technicalities. Motivated by the Gumbel trick\footnote{\href{https://francisbach.com/the-gumbel-trick/}{\color{blue}{https://francisbach.com/the-gumbel-trick/}}}, we view the self-attention model as a Markov chain with transition probabilities given by the attention scores. We show that this process exhibits \emph{dynamic metastability}. Specifically, with high probability, in $T_1 = O(1)$ steps, interior particles converge to the vertices of the convex hull, which move very little up to this time ({\it \Cref{lem: first.phase}}), much like the conclusion of \Cref{sec: B.psd}. Then, particles remain within a ball of radius $\varepsilon$ around this configuration at time $T_1$ until elapsing $\gamma t/\varepsilon \sim e^{\beta}$ steps ({\it \Cref{lem: metastab.1}}).
    Interestingly, unlike for \eqref{HSA}, particles at finite-$\beta$ do not remain stationary---in infinite time, they collapse to a single cluster at the origin (\emph{\Cref{prop: origin}}). Thus \eqref{HSA} can be seen as a valid approximation of \eqref{eq:sa} up to $O(e^\beta)$ steps. 
    \end{itemize}

\subsection{Discussion and related work}

\subsubsection*{The $n^2$ complexity of attention and counting vertices}
At each layer of a Transformer, standard soft attention as in \eqref{eq:sa} requires $O(n^{2})$ operations to compute all pairwise attention scores between $n$ tokens. In contrast, our analysis hints that the mechanism may only need to identify the structural extremes of the token cloud.

Concretely, let $(x_{1},\dots,x_{n})\in(\mathbb{R}^{d})^n$ denote the token embeddings at a fixed layer, and let $\kappa$ be the number of vertices of their convex hull. Classical output-sensitive algorithms from computational geometry compute $\kappa$ efficiently when the ambient dimension $d$ is fixed. For instance, the gift-wrapping (Jarvis march) algorithm \cite{jarvis1973identification} identifies one vertex at a time in $O(n\kappa)$ time using constant memory, making it especially effective when $\kappa \ll n$. Chan’s algorithm \cite{chan1996optimal} improves this to $O(n \log \kappa)$ time with $O(n)$ space, matching known lower bounds for exact enumeration. In streaming or memory-constrained settings, the convex hull can be incrementally maintained online with expected $O(n\kappa)$ cost \cite{cormen2022introduction, preparata2012computational}. The application of such algorithms to the setting of natural language processing has already borne fruit in the past---see \cite{vinyals2015pointer} for instance.

Empirically, $\kappa$ appears to grow sublinearly with $n$ and often remains in the tens even for thousands of tokens. Several  studies support this observation---attention in large language models typically concentrates on a small subset of input tokens, with less than 20--30\% of tokens contributing meaningfully to the output \cite{brahma2022breaking}, and sometimes as few as 1--2\% sufficing for accurate predictions in long-context inference \cite{synk2025exploiting}---see also \cite{clark2019does}---such conclusions have also been made in the context of oversmoothing or rank-collapse \cite{dong2021attention, noci2022signal, shi2022revisiting, zhai2023stabilizing, nguyen2023mitigating, dovonon2024setting, scholkemper2024residual, wu2024role, alman2025only}, and attention sinks \cite{son2024prefixing, gu2024attention, barbero2025llms}.
These findings align with our geometric picture where only a small number of extreme points govern the dynamics. They also motivate algorithmic strategies that exploit sparsity, such as top-$k$ attention or token pruning, to reduce the  computational burden of attention \cite{nawrot2025sparse, wang2024zero, desai2024hashattention}.

\subsubsection*{Self-attention dynamics}

Since the clear presentation in \cite{sander2022sinkformers}, the dynamics in \eqref{eq:sa} have been studied in great detail in the mathematical literature, much as in the neural ODE literature \cite{esteve2020large, geshkovski2022turnpike}. In \cite{geshkovski2023emergence}, the authors consider the continuous-time version and prove various clustering results as time tends to infinity, depending on the spectral properties of the value and key–query matrices. These results are consistent with consensus phenomena known in collective-behavior models (see \cite{motsch2014heterophilious,Tadmor2021Swarming} and the references therein). The mean-field case (without rescaling) is then studied in greater depth in \cite{castin2025unified}. The dynamics also bear a striking similarity to the mean-shift method \cite{fukunaga1975estimation}.

Our work is strongly inspired by \cite{geshkovski2023emergence} and focuses on the discrete-time setting; we obtain precise rates, allow time-dependent parameters, and explain intermittent behavior. A related discrete-time work is \cite{alcalde2024clustering}, which assumes time-independent parameter matrices and restricts to the symmetric positive-definite key–query case, without rates.

The above-cited works omit true layer normalization. With layer normalization, the dynamics evolve on the unit sphere, as observed in \cite{geshkovski2025mathematical}, where various clustering results are proved using synthetic gradient-flow techniques. In two dimensions, the authors originally established the result only for certain values of $\beta$; this was improved in \cite{criscitiello2024synchronization} and then completely resolved—and, surprisingly, generalized to a small window of negative temperatures—in \cite{polyanskiy2025synchronization}.

These works have since impelled a number of refinements: dynamic metastability \cite{geshkovski2024dynamic,bruno2024emergence}; bounds on the number of clusters \cite{geshkovski2024number}; extensions to more general parameters \cite{burger2025analysis,abellaconsensus}; additive noise \cite{shalova2024solutions,kan2025ot}; masked attention \cite{karagodin2024clustering,wu2024transformer}; the mean-field regime \cite{chen2025quantitative,zimin2025learning}; mean-field control \cite{geshkovski2024measure,adu2024approximate,biswal2024identification,mehta2025functional}; LoRA-style ideas \cite{koubbi2024impact,huan2025fine,gang2025smarter}; and applications to operator learning \cite{calvello2024continuum,JMLR:v25:23-1547}. See also \cite{cowsik2024geometric, hu2024depth,viswanathan2025geometry,bao2024self,tomihari2025recurrent,ruan2024towards} for related directions; initialization issues are studied in \cite{giorlandino2025two}; and connections to  clustering algorithms appear in \cite{clarkson2025finding,zimin2025learning}.

\subsubsection*{Metastability}

We use the notion of dynamical metastability as in partial differential equations such as the Allen--Cahn equation (see \cite{otto2007slow} and references therein): the dynamics rapidly approach a nearly stationary state, remain there for a very long time, and only eventually converge to equilibrium. In this regard, there are several results for the continuous-time analogue of \eqref{eq:sa} on the unit sphere; see \cite{geshkovski2024dynamic, bruno2024emergence}. Our setting is much closer to \cite{geshkovski2024dynamic}, whereas \cite{bruno2024emergence} does not consider the limit $\beta\to+\infty$ and instead performs a perturbative analysis around the unstable equilibrium in a mean-field regime $n\to+\infty$. Beyond the substantive differences that we work in discrete time and use a different normalization mechanism, the metastable state in our setting is characterized by the hardmax dynamics.

Finally, for Markov chains similar to \eqref{eq: softmax.process}, several works obtain analogous results and provide general criteria under which they hold \cite{gayrard2004metastability, bovier2005metastability, bovier2016metastability, landim2023resolvent}. Applications include models from statistical physics such as the Curie--Weiss model \cite{schlichting2019poincare}. We leave it to future work to determine whether our model fits within these frameworks.

\subsection{Notation}

We denote by $\|x\|$ the Euclidean norm of $x\in\R^d$, by $\langle x,y\rangle=x^\top y$ the inner product of $x$ and $y$, by $B(0, r)$ the closed Euclidean ball centered at $0$ with radius $r$, and $B_1=B(0, 1)$. 
For a bounded $\mathcal{K}\subset\R^d$, we always denote $\mathsf{d}(\mathcal{K})=\diam(\mathcal{K})$.

\subsection*{Acknowledgments}

B.G. thanks Francis Bach for pointing him to the Frank-Wolfe method and the Gumbel trick. 
\smallskip

\noindent
{\small {\bf Funding.}  A.A acknowledges funding from the European Union (Horizon Europe MSCA project ModConFlex, grant number 101073558). D.RB acknowledges “France 2030” support managed by the Agence Nationale de la Recherche, under the reference ANR-23-PEIA-0004.}

\section{Derivations} \label{sec: derivations}

In this section, we further rewrite \eqref{eq: rescaled.Tformers} and then motivate the choice of $V^t$ as a multiple of the identity matrix by viewing it as a pre-conditioner. 

\subsection{The argmax is a singleton}

We begin with the following lemma.

\begin{lemma}[$\argmax$ is a singleton] \label{lem:singleLeader}
    Suppose $B^t$ is invertible for all $t\geq 0$. Then, for almost every initial configuration $(x_i^0)_{i\in \llbracket 1, n\rrbracket}\in(\R^d)^n$, $$\#\mathscr{C}_i^t=1$$
    for all $t\geq0$ and $i\in\llbracket 1, n\rrbracket$
\end{lemma}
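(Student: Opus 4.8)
The plan is to show that the "bad set" of initial configurations, for which some $\mathscr{C}_i^t$ fails to be a singleton at some time $t$, is a Lebesgue-null subset of $(\R^d)^n$. The key structural observation is that the map sending an initial configuration $(x_i^0)_i$ to the configuration $(x_i^t)_i$ at any later time $t$ is, on a generic open set, a composition of rational maps: at each step, provided no tie occurs, the update \eqref{eq: hardmax.dynamics.V} adds to $x_i^t$ an affine function of the averaged leader(s), and the selection of the leader is locally constant. So I would argue by induction on $t$: assuming that up to time $t$ the dynamics are well-defined and each $x_i^s$ ($s \le t$) is a (locally constant branch of a) real-analytic — in fact rational, away from poles — function of the initial data on a full-measure set, it suffices to show that the set of initial data for which $\mathscr{C}_i^{t}$ is not a singleton for some $i$ is null.

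For fixed $i$ and fixed pair $j \ne k$, a tie in $\mathscr{C}_i^t$ along the branch under consideration means $\langle B^t x_i^t, x_j^t - x_k^t\rangle = 0$, which is a single real-analytic equation $F_{ijk}(x_1^0,\dots,x_n^0) = 0$ in the $nd$ initial coordinates. By the standard fact that the zero set of a real-analytic function is either all of the (connected) domain or a set of measure zero, it is enough to exhibit, on each relevant connected piece, one configuration where $F_{ijk} \ne 0$; then the union over the finitely many triples $(i,j,k)$ and the countably many times $t$ is still null. The natural witness is to perturb: the identity dynamics ($h^t$ small or, at $t=0$, directly in the initial data) makes $x_i^0$, $x_j^0$, $x_k^0$ nearly free, and for generic $B^t$ invertible one can clearly choose $x_i^0, x_j^0, x_k^0$ with $\langle B^0 x_i^0, x_j^0 - x_k^0\rangle \ne 0$, settling $t = 0$; for later $t$ one propagates genericity using that the flow map is a non-degenerate (e.g. near-identity for small step sizes, or more robustly, dominant-term) perturbation, so the pulled-back analytic function is not identically zero. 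Invertibility of $B^t$ is what rules out the degenerate possibility that $B^t x_i^t$ is forced to be orthogonal to the whole span of token differences.

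I would organize the write-up as: (i) reduce to showing that for each $t$ the exceptional set $\mathcal{N}_t = \{(x_i^0)_i : \exists\, i,\ \#\mathscr{C}_i^t \ge 2\}$ is null, and take $\bigcup_{t \ge 0}\mathcal{N}_t$, a countable union; (ii) on the complement of $\bigcup_{s < t}\mathcal{N}_s$, express $x_i^t$ as a real-analytic function of the initial configuration (piecewise, according to which leaders were selected at each earlier step), using that each update map is rational with poles only where $I_d + V^t$ — here $1 + h^t \ne 0$ — is singular, which we have excluded; (iii) for each tie equation $F_{ijk}^t = \langle B^t x_i^t, x_j^t - x_k^t\rangle$, invoke the analytic-zero-set dichotomy and produce an explicit non-vanishing point, using invertibility of $B^t$; (iv) conclude. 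The main obstacle I anticipate is purely bookkeeping rather than conceptual: making precise the piecewise-analytic structure of the time-$t$ map, since the domain of definition is cut out by the (finitely many, per step) previously-resolved tie inequalities, so one is really working on a finite partition of a full-measure open set into connected analytic pieces, and one must check the non-vanishing witness lies in — or can be chosen within — the correct piece. A clean way to sidestep some of this is to note that the full-measure open set on which the first $t$ steps are tie-free is a finite disjoint union of connected open sets, argue on each separately, and only ever need existence of a single good point in the closure, using continuity to push it inside.
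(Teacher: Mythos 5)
Your overall structure---reducing to a countable union over $t$, exploiting that the flow map branches on locally constant leader selections and is nice on each branch, and controlling a tie set at each step---matches the paper's, and your plan can almost certainly be pushed through, but you travel a harder road than necessary. You describe the per-step update as piecewise \emph{rational}/analytic and propose to show each composed tie equation $F_{ijk}^t$ is not identically zero on each connected piece via the real-analytic zero-set dichotomy together with a hand-built witness point; you correctly flag as the ``main obstacle'' the difficulty of placing that witness inside the right piece. The paper instead uses the sharper structural fact that on each tie-free piece the one-step update is \emph{affine} with linear part $(I_d + V^t)^{-1}$, which is invertible (this is exactly why the paper assumes $-1 \notin \operatorname{spec}(V^t)$); an affine map with invertible linear part pulls back Lebesgue-null sets to null sets. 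Hence the flow map $\Phi^{0\to t}$ is piecewise affine with invertible pieces and pulls back null sets to null sets, and since the tie locus at time $t$, read in time-$t$ coordinates, is contained in a finite union of proper hyperplanes $\{x : \langle B^t x, v_i^t - v_j^t\rangle = 0\}$---proper precisely because $B^t$ is invertible---the conclusion follows with no witness-hunting and no bookkeeping of connected components. The ``main obstacle'' you anticipate is exactly what this invertibility observation removes; and any careful execution of your non-vanishing-somewhere argument would implicitly be re-deriving that the flow does not collapse a piece onto a hyperplane, which is the same invertibility fact in disguise, so you might as well state it directly.
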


\begin{proof}[Proof of \Cref{lem:singleLeader}]
Fix $t\geq 0$, let $\mathcal{K}^t \coloneqq \conv \{x_j^t\}_{j\in \llbracket 1, n\rrbracket}$ with $v_1^t, \dots, v_{\kappa}^t$ its vertices for $\kappa\leq n$. Consider
\begin{equation*}
    H_{ij}^t \coloneqq \left\{ x\in \R^d  :  \langle B^t x, v_i^t - v_j^t\rangle = 0 \right\}
\end{equation*}
for $i,j \in \llbracket 1, \kappa\rrbracket$. By construction, $H_{ij}^t$ are $(d-1)$-dimensional hyperplanes and since $B^t$ is invertible, they have zero measure in $\R^d$. 
Since 
\begin{equation*}
    H^t \coloneqq \bigcup_{i,j \in \llbracket 1, \kappa\rrbracket}  H_{ij}^t
\end{equation*} 
is a finite union of zero measure sets, for almost every $x \in \R^d$, $x\notin H^t$. Thus, the map $T^t: \R^d \to \R^d$ such that
\begin{equation*}
    T^t(x) = x + (I_d + V^t)^{-1} V^t \left( \argmax_{y \in \mathcal{K}^t} \langle B^t x, y\rangle-x\right)
\end{equation*}
is uniquely defined. To extend the argument for all $t\geq 1$, we need to verify that $T^t$ does not map positive measure sets into zero measure sets. This is clear because $T^t$ is piecewise affine, so it maps any positive measure set into a finite union of positive measure sets.    
\end{proof}

\subsection{The value matrix as a pre-conditioner} \label{sec: preconditioner}

\Cref{lem:singleLeader} allows us to write \cref{eq: hardmax.dynamics.V}, for almost every initial configuration, as
\begin{equation}\label{eq: hardmax.dynamics.V_ae}
    x^{t+1}_i = x^t_i + (I_d + V^t)^{-1} V^t \left( \underset{y \in \{ x^t_j \}_{j\in \llbracket 1, n\rrbracket}}{\argmax} \ip{B^t x_i^t}{y} - x_i^t\right).
\end{equation}
Since the linear function $\ip{B^t x_i^t}{\,}$ attains its maximum on $
 \mathcal{K}^t\coloneqq\conv \{ x^t_j\}_{j\in \llbracket 1, n\rrbracket}$ at the extreme points, the $\argmax$ can equivalently be taken over
$\mathcal{K}^t$. When $B^t$ is symmetric, we can view \eqref{eq: hardmax.dynamics.V} (for each $i$) as a Frank-Wolfe update with "matrix-valued step-sizes" for the quadratic function $\mathsf{J}(x) = \frac12\langle B^tx,x\rangle$ over the convex set $\mathcal{K}^t$. 

Throughout the rest of the paper we focus on the case $V^t=h^t I_d$ with $h^t\geq0$, as it is not clear how to extend our methods to the case of such "matrix-valued step-sizes". 
We nonetheless discuss a possible interpretation of the role of the matrix $V^t$ which motivate our particular choice. This informal discussion is almost entirely motivated by the elementary observation that for an invertible matrix $P$,
\begin{equation*}
\argmax_{y\in \mathcal{K}^t} \langle B^t x^t_i, y\rangle = P^{-1} \argmax_{z\in P \mathcal{K}^t} \langle (P^{-1})^\top B^t x^t_i, z\rangle.    
\end{equation*}
Hence the update \eqref{eq: hardmax.dynamics.V}, rearranged as
\[
x^{t+1}_i = x^t_i + P^t \left( \argmax_{y \in \mathcal{K}^t} \ip{B^t x_i^t}{y} - x_i^t \right),
\]
where $P^t \coloneqq (I_d + V^t)^{-1} V^t$
can be understood as a \emph{preconditioned Frank-Wolfe} iteration. The term inside the parentheses plays---as usual---the role of a direction selected by a linear oracle, and the matrix \( P^t \) determines how this direction is scaled and warped. Since \( P^t = (I_d + V^t)^{-1} V^t \), it behaves like a smoothed projection onto the image of \( V^t \), with limiting behavior $\lim_{V^t \to 0} P^t = 0$ and $\lim_{V^t \to +\infty} P^t = I_d.$
This shows that \( P^t \) interpolates between no update and a full step depending on the magnitude and spectrum of \( V^t \).
Furthermore,

\begin{enumerate}
    \item When \( V^t \succ0 \), the expression \( (I_d + V^t)^{-1} V^t \) can be seen as a surrogate for a \emph{natural gradient step}, where the pre-conditioning matrix is derived from a Riemannian metric or Fisher information approximation. In particular, if \( V^t \approx \nabla^2 \mathsf{J}(x^t_i) \), then \( P^t \) plays a role of \( \left(I + \nabla^2 \mathsf{J}(x^t_i)\right)^{-1} \nabla^2 \mathsf{J}(x^t_i) \)---a damped Newton-like correction.
    \item The update can also be interpreted in the framework of \emph{mirror descent} with a quadratic mirror map \( \phi(x) = \frac{1}{2} \langle (I_d + V^t) x, x\rangle \). In this case, the matrix \( P^t \) arises from mapping a dual-space step back to the primal space via the inverse Hessian \( \nabla^2 \phi(x)^{-1} = (I_d + V^t)^{-1} \), followed by applying \( V^t \). Thus, \( P^t \) captures how the dual geometry modifies the primal update direction.
\end{enumerate}

\subsection{Shrinkage}

In view of the above discussion, we consider
\begin{equation} \label{HSA}
x^{t+1}_i = x_i^t + \gamma^t \left( \underset{y \in \mathcal{K}^t}{\argmax} \ip{B^t x_i^t}{y} - x_i^t\right) \tag{SA$_{\infty}$}
\end{equation}
where $\gamma^t=h^t/(1+h^t)$.
Notice that, by definition of \eqref{HSA}, we immediately deduce the following.

\begin{lemma}[The convex hull shrinks] \label{lem:convHullDecreases}
    Suppose that $\gamma^t\in(0, 1)$ for all $t\geq 0$. Then, the map $t\mapsto \mathcal{K}^t$ is decreasing: $\mathcal{K}^{t+1} \subseteq \mathcal{K}^t$ for all $t\geq 0$.
\end{lemma}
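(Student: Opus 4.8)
The claim is that $\mathcal{K}^{t+1} \subseteq \mathcal{K}^t$ when $\gamma^t \in (0,1)$. Since $\mathcal{K}^{t+1} = \conv\{x_i^{t+1}\}_{i \in \llbracket 1,n\rrbracket}$, and the convex hull of a set is the smallest convex set containing it, it suffices to show that each updated particle $x_i^{t+1}$ lies in $\mathcal{K}^t$; then $\mathcal{K}^{t+1} = \conv\{x_i^{t+1}\}_i \subseteq \mathcal{K}^t$ because $\mathcal{K}^t$ is itself convex and already contains all the $x_i^{t+1}$.

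So the plan reduces to a one-line convexity check. Fix $i$ and $t$. By the defining update rule \eqref{HSA},
\[
x_i^{t+1} = (1-\gamma^t)\, x_i^t + \gamma^t \underset{y \in \mathcal{K}^t}{\argmax}\,\ip{B^t x_i^t}{y}.
\]
This exhibits $x_i^{t+1}$ as a convex combination (with weights $1-\gamma^t$ and $\gamma^t$, both in $(0,1)$ and summing to $1$) of two points: $x_i^t$, which lies in $\mathcal{K}^t$ by definition of $\mathcal{K}^t$ as the convex hull of the $x_j^t$; and the $\argmax$ point, which is an extreme point (vertex) of $\mathcal{K}^t$ and hence also lies in $\mathcal{K}^t$. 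Since $\mathcal{K}^t$ is convex, it contains this convex combination, i.e. $x_i^{t+1} \in \mathcal{K}^t$.

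There is essentially no obstacle here; the only minor point to be careful about is that the $\argmax$ is well-defined as a point of $\mathcal{K}^t$ — for almost every initial configuration this is guaranteed by \Cref{lem:singleLeader}, and in any case the linear functional $y \mapsto \ip{B^t x_i^t}{y}$ attains its maximum over the compact convex set $\mathcal{K}^t$ at some extreme point, so any selection lies in $\mathcal{K}^t$. One then iterates the inclusion $\mathcal{K}^{t+1} \subseteq \mathcal{K}^t$ over all $t \geq 0$ to obtain monotonicity of $t \mapsto \mathcal{K}^t$.
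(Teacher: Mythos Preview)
Your argument is correct and is precisely what the paper intends: it states the lemma as an immediate consequence of the definition of \eqref{HSA}, and your one-line convexity check (each $x_i^{t+1}$ is a convex combination of $x_i^t\in\mathcal{K}^t$ and a maximizer in $\mathcal{K}^t$) is exactly that observation spelled out.
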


The shrinkage of the convex hull of the particles is a property that will be of significant use in what follows. Notably, this property does not hold in general for arbitrary value matrices, as illustrated in the following example.

\begin{remark} 
Consider $V^t = \mathrm{diag} \left( \lambda_1^t, \dots, \lambda_d^t \right),$
where \(\lambda_i^t \geq 0\). Then
\begin{align*}
    P^t \coloneqq ( I_d + V^t)^{-1} V^t = \mathrm{diag} \left( \frac{\lambda_1^t}{1 + \lambda_1^t}, \dots, \frac{\lambda_d^t}{1 + \lambda_d^t} \right).
\end{align*}
Each coordinate of \(x_i^{t+1}\) is then a convex combination of elements in \(\mathcal{K}^t\). However, this is not sufficient to ensure that \(x_i^{t+1} \in \mathcal{K}^t\). Indeed, consider $(x_1^t, x_2^t, x_3^t)=(0_{\R^2},e_1,e_2)$ to be the vertices of the unit triangle in $\R^2$, and choose \(B^t\) such that $$\underset{y \in \mathcal{K}^t}{\arg\max} \langle x_2^t, y\rangle = x_3^t,$$
and $P^t=\mathrm{diag} \left( 0.6, 0.7 \right)$. Then $x_2^{t+1} = x_2^t + P^t(x_3^t-x_2^t) = (0.4, 0.7)\notin \mathcal{K}^t$.
\end{remark}

\section{Negative-definite key-query} \label{sec: B.nsd}

We first consider \eqref{HSA} with a symmetric $B^t$, which we reparametrize as $$B^t=-B^t_{*}.$$ This allows us to rewrite \eqref{HSA} equivalently as
\begin{equation}\label{eq: hardmax.dynamics.V_fw}
    x^{t+1}_i = x^t_i + \gamma^t \left( \underset{y \in \mathcal{K}^t}{\argmin} \ip{B^t_* x_i^t}{y} - x_i^t\right).
\end{equation}
Consider
\begin{equation*}
    \mathsf{J}^t(x) \coloneqq \frac{1}{2}\langle B^t_* x, x\rangle,
\end{equation*}
which is convex when $B_*^t\succcurlyeq0$, and $\nabla \mathsf{J}^t (x) = B^t_* x$. Thus \eqref{eq: hardmax.dynamics.V_fw} is a standard Frank-Wolfe scheme for $\mathsf{J}^t$ over the convex set $\mathcal{K}^t$.
Adapting mostly standard theory \cite[Chapter 9.3]{bach2024learning} to $\mathsf{J}^t$, we can show the following.

\begin{theorem}[Frank-Wolfe convergence (to a cluster)] \label{thm: fw.cluster}
Suppose $B^t_* - B^{t+1}_* \succcurlyeq 0$ and $B^t_*\succcurlyeq0$ for all $t\geq0$.
Fix $\gamma^t = 2/(t+2)$ and suppose $0\in\mathcal{K}^0$. For all $i\in\llbracket 1, n\rrbracket$, particles evolving according to \cref{eq: hardmax.dynamics.V_fw} satisfy
\begin{equation*}
    \mathsf{J}^t (x^{t+1}_i) \leq \frac{2}{t+1} \cdot \lambda_{\max}(B_*^0) \cdot \mathsf{d}(\mathcal{K}^0)^2.
\end{equation*}
\end{theorem}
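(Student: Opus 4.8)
The plan is to run the standard Frank--Wolfe descent argument for the convex objectives $\mathsf{J}^t$, but carefully tracking how the time-dependence of $B_*^t$ and the shrinkage of $\mathcal{K}^t$ interact. Fix $i$ and write $z^t \coloneqq x_i^t$, $p^t \coloneqq \argmin_{y\in\mathcal{K}^t}\ip{B_*^t z^t}{y}$, so that $z^{t+1} = z^t + \gamma^t(p^t - z^t)$. The first step is the one-step decrease inequality: since $\mathsf{J}^t$ is quadratic,
\begin{equation*}
\mathsf{J}^t(z^{t+1}) = \mathsf{J}^t(z^t) + \gamma^t \ip{\nabla\mathsf{J}^t(z^t)}{p^t - z^t} + \tfrac{(\gamma^t)^2}{2}\ip{B_*^t(p^t-z^t)}{p^t-z^t},
\end{equation*}
and the curvature term is bounded by $\tfrac{(\gamma^t)^2}{2}\lambda_{\max}(B_*^t)\,\mathsf{d}(\mathcal{K}^t)^2$. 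The key quantity is the duality gap $g^t \coloneqq \ip{\nabla\mathsf{J}^t(z^t)}{z^t - p^t} = \max_{y\in\mathcal{K}^t}\ip{B_*^t z^t}{z^t - y} \geq \mathsf{J}^t(z^t) - \mathsf{J}^t(y)$ for any $y\in\mathcal{K}^t$; in particular, since $0\in\mathcal{K}^0$ and $\mathcal{K}^t$ is decreasing by \Cref{lem:convHullDecreases} (using $\gamma^t\in(0,1)$, which holds as $\gamma^t = 2/(t+2)$), we have $0\in\mathcal{K}^t$ for all $t$, so $g^t \geq \mathsf{J}^t(z^t) - \mathsf{J}^t(0) = \mathsf{J}^t(z^t) \geq 0$. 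This gives $\mathsf{J}^t(z^{t+1}) \leq \mathsf{J}^t(z^t) - \gamma^t \mathsf{J}^t(z^t) + \tfrac{(\gamma^t)^2}{2} C^t$ with $C^t \coloneqq \lambda_{\max}(B_*^t)\mathsf{d}(\mathcal{K}^t)^2$.

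The second step is to handle the time-dependence of the objective so as to telescope. Since $B_*^t - B_*^{t+1}\succcurlyeq 0$, we have $\mathsf{J}^{t+1}(x) = \tfrac12\ip{B_*^{t+1}x}{x} \leq \tfrac12\ip{B_*^t x}{x} = \mathsf{J}^t(x)$ pointwise, so $\mathsf{J}^{t+1}(z^{t+1}) \leq \mathsf{J}^t(z^{t+1})$. Likewise $\lambda_{\max}(B_*^t)$ is nonincreasing in $t$ (by $B_*^t \succcurlyeq B_*^{t+1} \succcurlyeq 0$ and the variational characterization of the top eigenvalue) and $\mathsf{d}(\mathcal{K}^t)$ is nonincreasing by \Cref{lem:convHullDecreases}, so $C^t \leq C^0 = \lambda_{\max}(B_*^0)\mathsf{d}(\mathcal{K}^0)^2$. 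Setting $e^t \coloneqq \mathsf{J}^t(z^t) \geq 0$ we obtain the recursion $e^{t+1} \leq (1-\gamma^t) e^t + \tfrac{(\gamma^t)^2}{2}C^0$.

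The third step is the routine induction with $\gamma^t = 2/(t+2)$: one shows $e^{t} \leq \tfrac{2C^0}{t+1}$ for $t\geq 0$ — the base case $e^0 = \mathsf{J}^0(z^0) \leq \tfrac12\lambda_{\max}(B_*^0)\|x_i^0\|^2 \leq 2C^0$ holds (using $\|x_i^0\|\leq \mathsf{d}(\mathcal{K}^0)$ since $0\in\mathcal{K}^0$ forces $\|x_i^0\| = \|x_i^0 - 0\| \leq \mathsf{d}(\mathcal{K}^0)$), and the inductive step is
\begin{equation*}
e^{t+1} \leq \Bigl(1-\tfrac{2}{t+2}\Bigr)\tfrac{2C^0}{t+1} + \tfrac12\cdot\tfrac{4}{(t+2)^2}C^0 = \tfrac{2C^0\, t}{(t+2)(t+1)} + \tfrac{2C^0}{(t+2)^2} \leq \tfrac{2C^0}{t+2},
\end{equation*}
where the last inequality is the elementary $\tfrac{t}{(t+2)(t+1)} + \tfrac{1}{(t+2)^2} \leq \tfrac{1}{t+2}$, i.e. $\tfrac{t}{t+1} + \tfrac{1}{t+2}\leq 1$, which rearranges to $t(t+2) + (t+1) \leq (t+1)(t+2)$, i.e. $t^2+3t+1 \leq t^2+3t+2$. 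Applying this with $t+1$ in place of $t$ and using $\mathsf{J}^{t+1}(z^{t+1}) = e^{t+1} \leq \tfrac{2C^0}{t+2} = \tfrac{2}{t+1}\cdot C^0$ — wait, I want the bound at index $t$ on the left as stated, namely $\mathsf{J}^t(x_i^{t+1}) \leq \tfrac{2}{t+1}C^0$; since $\mathsf{J}^t(z^{t+1}) \leq \mathsf{J}^t(z^t) - \gamma^t e^t + \tfrac{(\gamma^t)^2}{2}C^0$ with the right side already shown $\leq \tfrac{2C^0}{t+2} \leq \tfrac{2C^0}{t+1}$, this is exactly the claim. I do not anticipate a serious obstacle here; the one point requiring a little care is making sure the monotonicity hypotheses are used in the right places — in particular that $\mathsf{J}^{t+1}\leq\mathsf{J}^t$ pointwise is what lets the per-step bound at the ``wrong'' objective $\mathsf{J}^t(z^{t+1})$ feed into the next step's $e^{t+1}=\mathsf{J}^{t+1}(z^{t+1})$, and that $0\in\mathcal{K}^t$ for all $t$ (hence the duality gap dominates $\mathsf{J}^t(z^t)$ itself, giving geometric-type contraction rather than merely the generic $O(1/t)$ gap bound). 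The clustering-at-the-origin interpretation then follows since $\mathsf{J}^t(x_i^{t+1}) \to 0$ with $\lambda_{\min}$ considerations, but that is a corollary rather than part of this statement.
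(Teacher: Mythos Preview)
Your proof is correct and follows essentially the same approach as the paper: both expand the quadratic $\mathsf{J}^t$ along the Frank--Wolfe step, use $0\in\mathcal{K}^t$ (via shrinkage) to bound the linear term by $-\mathsf{J}^t(z^t)$, bound the curvature uniformly by $\lambda_{\max}(B_*^0)\mathsf{d}(\mathcal{K}^0)^2$ via the monotonicity hypotheses, and pass from $\mathsf{J}^t(z^{t+1})$ to $\mathsf{J}^{t+1}(z^{t+1})$ using $B_*^{t+1}\preccurlyeq B_*^t$. The only cosmetic difference is in solving the recursion: the paper substitutes $a^t=(t+1)\mathsf{J}^t(x_i^t)$ and telescopes, whereas you induct directly on $e^t\leq 2C^0/(t+1)$ --- your version is in fact slightly cleaner and more attentive to the index $\mathsf{J}^t(x_i^{t+1})$ appearing in the statement.
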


The proof follows by adapting standard arguments \cite[Chapter 9.3]{bach2024learning} and can be found in {\bf \Cref{sec: bach.proof}}. We fix $\gamma^t = 2/(t+1)$ to obtain a linear convergence rate. But in fact all the known theory \cite[Chapter 9.3]{bach2024learning} adapts to this setting, and one can readily 
    deduce a qualitative convergence result 
    assuming only that $\gamma^t\in(0,1)$ satisfies $\sum_{t=0}^{+\infty} (\gamma^t)^2<+\infty$ and $\sum_{t=0}^{+\infty} \gamma^t=+\infty$.

\section{Positive-definite key-query} \label{sec: B.psd}

We now consider \eqref{HSA} where $B^t$ is positive definite. In this case, the system is a Frank-Wolfe scheme for 
\begin{equation*}
    \min_{y\in \mathcal{K}^t} \mathsf{J}^t(y)
\end{equation*}
where $\mathsf{J}^t(y)=-\frac{1}{2} \langle B^t y,y\rangle$.
In the setting where the objective function is concave, less is known about the Frank-Wolfe scheme. One naturally expects a dual behavior to that of the convex case---in this instance particles should converge to the boundary of the convex hull. 
A first result one can show follows directly from the literature; for instance, following \cite[Lemma 2.1]{yurtsever2022cccp}\footnote{Another related work is \cite{lacoste2016convergence}, which shows convergence of the same quantity with a $O(1/\sqrt{t})$-rate, for general non-convex objectives.},

\begin{proposition} \label{prop: trash}
Suppose $B^t=\beta^t B$ for $B\succcurlyeq0$, with $\beta^t/\beta^{t+1}=\gamma^t/\gamma^{t+1}$ for all $t\geq0$. For all $i\in\llbracket 1, n\rrbracket$, $t\geq 0$, particles evolving according to \eqref{HSA} satisfy 
\begin{equation*}
    \min_{\tau\in\llbracket 1, t\rrbracket}\max_{y\in\mathcal{K}^\tau}\ip{\nabla \mathsf{J}^\tau (x^\tau_i)}{x^\tau_i - y} \leq  \frac{1}{t} \left(\frac{\mathsf{J}^1 (x^1_i)}{\gamma^1}  -  \frac{\inf_{y\in \mathcal{K}^t} \mathsf{J}^t (y)}{\gamma^t} \right).
\end{equation*}
\end{proposition}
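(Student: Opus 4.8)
The plan is to adapt the standard Frank--Wolfe descent-lemma computation, as in \cite[Lemma 2.1]{yurtsever2022cccp}, but carried out for the \emph{time-dependent} objective $\mathsf{J}^t(y) = -\frac12\langle B^t y, y\rangle$ with $B^t = \beta^t B$. The key point is that the scaling hypothesis $\beta^t/\beta^{t+1} = \gamma^t/\gamma^{t+1}$ is exactly what is needed to make the telescoping argument work despite the moving objective. Concretely, I would first fix $i$ and write $z^t \coloneqq x_i^t$, $d^t \coloneqq \argmax_{y\in\mathcal{K}^t}\langle B^t z^t, y\rangle = \argmin_{y\in\mathcal{K}^t}\langle \nabla\mathsf{J}^t(z^t), y\rangle$, so that \eqref{HSA} reads $z^{t+1} = z^t + \gamma^t(d^t - z^t)$. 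Denote the dual/Frank--Wolfe gap by $g^t \coloneqq \langle \nabla\mathsf{J}^t(z^t), z^t - d^t\rangle = \max_{y\in\mathcal{K}^t}\langle \nabla\mathsf{J}^t(z^t), z^t - y\rangle \geq 0$ (nonnegativity since $z^t\in\mathcal{K}^t$).

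The second step is the descent estimate. Since $\mathsf{J}^t$ is a concave quadratic, expanding exactly gives
\[
\mathsf{J}^t(z^{t+1}) = \mathsf{J}^t(z^t) + \gamma^t\langle \nabla\mathsf{J}^t(z^t), d^t - z^t\rangle - \frac{(\gamma^t)^2}{2}\langle B^t(d^t - z^t), d^t - z^t\rangle = \mathsf{J}^t(z^t) - \gamma^t g^t + \frac{(\gamma^t)^2}{2}\langle B^t(d^t-z^t), d^t-z^t\rangle,
\]
and because $B^t = \beta^t B \succcurlyeq 0$ the last term is nonnegative, so we can only bound it away from above by $0$ if we are careful --- actually we want the reverse: we drop it only if it has the favorable sign. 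Here it is $\geq 0$, which is the ``wrong'' direction, so instead one should not expand but use concavity directly: $\mathsf{J}^t(z^{t+1}) \leq \mathsf{J}^t(z^t) + \langle \nabla\mathsf{J}^t(z^t), z^{t+1} - z^t\rangle = \mathsf{J}^t(z^t) - \gamma^t g^t$. Rearranged, $\gamma^t g^t \leq \mathsf{J}^t(z^t) - \mathsf{J}^t(z^{t+1})$. Dividing by $(\gamma^t)^2$ and using that $\mathsf{J}^t$ scales linearly in $\beta^t$ together with $\beta^t/(\gamma^t)^2 \cdot$ (something): more precisely, write $\mathsf{J}^t = \beta^t \mathsf{J}$ where $\mathsf{J}(y) = -\frac12\langle By, y\rangle$, so $\mathsf{J}^t(z^t)/\gamma^t = \beta^t\mathsf{J}(z^t)/\gamma^t$, and the hypothesis $\beta^t/\gamma^t = \beta^{t+1}/\gamma^{t+1} =: c$ makes this ratio into $c\,\mathsf{J}(z^t)$ uniformly. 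Then $g^t/\gamma^t \leq \mathsf{J}^t(z^t)/\gamma^t - \mathsf{J}^t(z^{t+1})/\gamma^t$, and I would massage the right-hand side using $\mathsf{J}^t(z^{t+1})/\gamma^t = \mathsf{J}^{t+1}(z^{t+1})/\gamma^{t+1}$ (again by the scaling hypothesis) to obtain the telescoping inequality $g^t/\gamma^t \leq \mathsf{J}^t(z^t)/\gamma^t - \mathsf{J}^{t+1}(z^{t+1})/\gamma^{t+1}$, valid for all $t\geq 1$.

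The third step is to sum from $\tau = 1$ to $t$: the right-hand side telescopes to $\mathsf{J}^1(z^1)/\gamma^1 - \mathsf{J}^{t+1}(z^{t+1})/\gamma^{t+1}$. To close, I bound $\mathsf{J}^{t+1}(z^{t+1})/\gamma^{t+1} \geq \inf_{y\in\mathcal{K}^{t+1}}\mathsf{J}^{t+1}(y)/\gamma^{t+1}$; by \Cref{lem:convHullDecreases} the convex hulls are nested, and since $\mathsf{J} \leq 0$ on the shrinking hulls one checks that $\inf_{y\in\mathcal{K}^{t+1}}\mathsf{J}^{t+1}(y)/\gamma^{t+1} = c\inf_{y\in\mathcal{K}^{t+1}}\mathsf{J}(y) \geq c\inf_{y\in\mathcal{K}^{t}}\mathsf{J}(y) = \inf_{y\in\mathcal{K}^t}\mathsf{J}^t(y)/\gamma^t$ (using $\mathcal{K}^{t+1}\subseteq\mathcal{K}^t$ and that the infimum of the concave quadratic over a larger set is smaller; one also needs to check this replacement is consistent with the index $t$ appearing in the statement — a minor reindexing). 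Hence $\sum_{\tau=1}^t g^\tau/\gamma^\tau \leq \mathsf{J}^1(z^1)/\gamma^1 - \inf_{y\in\mathcal{K}^t}\mathsf{J}^t(y)/\gamma^t$. Finally, $\min_{\tau\in\llbracket 1,t\rrbracket}(g^\tau/\gamma^\tau) \cdot t \leq \min_\tau g^\tau \cdot \sum ?$ — cleaner: since all $\gamma^\tau > 0$, we have $\min_{\tau\in\llbracket 1,t\rrbracket} g^\tau \leq$ nothing directly, so instead I note $\min_{\tau} g^\tau = \min_\tau \gamma^\tau (g^\tau/\gamma^\tau)$; but the statement has $\min_\tau g^\tau$ (i.e. the duality gap $\max_{y\in\mathcal{K}^\tau}\langle\nabla\mathsf{J}^\tau(x_i^\tau), x_i^\tau - y\rangle$) on the left, bounded by $\frac1t(\cdots)$. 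This matches if one replaces $g^\tau/\gamma^\tau$ by $g^\tau$ using $\gamma^\tau \leq 1$, or if the intended reading uses the averaging $\min_\tau g^\tau \leq \frac1t\sum_\tau g^\tau$ after absorbing the $\gamma^\tau$; I would follow whichever normalization makes the displayed constant exact, most likely $\min_\tau g^\tau \leq \frac1t\sum_{\tau=1}^t g^\tau$ combined with $g^\tau \leq g^\tau/\gamma^\tau$ (since $\gamma^\tau\in(0,1)$).

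The main obstacle is bookkeeping the interaction between the time-dependent objective and the step-size: getting the telescoping to actually close requires the $\beta^t/\gamma^t = $ const hypothesis to be used in two places (once to turn $\mathsf{J}^t(z^{t+1})/\gamma^t$ into $\mathsf{J}^{t+1}(z^{t+1})/\gamma^{t+1}$, once to handle the $\inf$ term), and one must be careful that $\mathsf{J} \leq 0$ so that dividing/comparing infima over nested sets preserves the right inequality direction. A secondary subtlety is that the naive exact expansion of the quadratic produces a term with the unfavorable sign, so one must invoke concavity (the first-order upper bound) rather than the exact second-order identity — this is the one place where concavity, rather than mere smoothness, is essential and explains why only a $O(1/t)$ bound on the gap (and not convergence of iterates) comes out.
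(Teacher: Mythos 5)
Your proof takes essentially the same route as the paper's (which proceeds by writing the gap as $\tfrac{1}{\gamma^t}\langle\nabla\mathsf{J}^t(x_i^t), x_i^t-x_i^{t+1}\rangle$, bounding it via concavity, using the hypothesis $\beta^t/\gamma^t=\beta^{t+1}/\gamma^{t+1}$ to telescope the sum, and invoking \Cref{lem:convHullDecreases} for the final infimum). The overall structure is correct, and you correctly identify the two places where the scaling hypothesis is used. However, you have two bookkeeping slips that create the confusion you notice at the end.

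First, the exact quadratic expansion has the \emph{favorable} sign, not the unfavorable one. With $\mathsf{J}^t(y)=-\tfrac12\langle B^ty,y\rangle$ the Hessian is $-B^t$, so the second-order term is
\[
-\tfrac{(\gamma^t)^2}{2}\langle B^t(d^t-z^t),\,d^t-z^t\rangle \;\leq\; 0,
\]
which can be dropped to give exactly the concavity upper bound $\mathsf{J}^t(z^{t+1})\leq \mathsf{J}^t(z^t)-\gamma^t g^t$. You wrote this term with a $+$ sign in your second rewriting, which led you to conclude (wrongly) that the exact expansion is the ``wrong direction'' and to retreat to concavity. Both routes give the same inequality.

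Second, the division step is off by one power of $\gamma^t$. From $\gamma^t g^t\leq \mathsf{J}^t(z^t)-\mathsf{J}^t(z^{t+1})$ you should divide by $\gamma^t$ (not $(\gamma^t)^2$), obtaining
\[
g^t \;\leq\; \frac{\mathsf{J}^t(z^t)}{\gamma^t}-\frac{\mathsf{J}^t(z^{t+1})}{\gamma^t}.
\]
Now the hypothesis gives $\mathsf{J}^t(z^{t+1})/\gamma^t=\mathsf{J}^{t+1}(z^{t+1})/\gamma^{t+1}$, so the sum $\sum_{\tau=1}^t g^\tau$ telescopes to $\mathsf{J}^1(z^1)/\gamma^1-\mathsf{J}^{t+1}(z^{t+1})/\gamma^{t+1}$, and applying the scaling once more plus $z^{t+1}\in\mathcal{K}^{t+1}\subseteq\mathcal{K}^t$ gives $\mathsf{J}^{t+1}(z^{t+1})/\gamma^{t+1}=\mathsf{J}^t(z^{t+1})/\gamma^t\geq \inf_{y\in\mathcal{K}^t}\mathsf{J}^t(y)/\gamma^t$. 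Then $\min_\tau g^\tau\leq \tfrac1t\sum_\tau g^\tau$ finishes. With this correction, the dangling normalization question you raise at the end disappears: the stated bound has $g^\tau$ (not $g^\tau/\gamma^\tau$) on the left, and that is exactly what emerges.
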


We omit the proof since it is a straightforward adaptation of \cite[Lemma 2.1]{yurtsever2022cccp}, and serves no particular purpose in our analysis.
The result is also not particularly informative as there is no effective control over the step $\tau$. We instead focus on making more structural assumptions on the initial configuration under which we can establish a significantly stronger result. 

We recall that for a convex polytope $\mathcal{K}\subset\R^d$ with vertices $v=(v_1,\ldots, v_{\kappa})$, and a square matrix $B$, the definition of the cells
\begin{equation} \label{eq: cells}
    \mathscr{C}_i(v) \coloneqq \left\{ x \in \mathcal{K} \colon \langle Bx, v_i \rangle = \max_{y\in\mathcal{K}}\langle Bx, y \rangle\right\}.
\end{equation}
Our main result of this section is 

\begin{theorem}[Super-exponential convergence to vertices] \label{thm: exp.fast.polytope}
    Let $B^t\equiv B\succ0$ and $\gamma^t\in(0, 1)$ for all $t\geq0$.  Consider an initial configuration $(x_i^0)_{i\in\llbracket 1, n\rrbracket}\in(\R^d)^n$ such that 
    \begin{enumerate}
        \item the vertices $v=(v_1, \ldots, v_{\kappa})$ of $\mathcal{K}\coloneqq\conv\{x_i^0\}_{i\in\llbracket 1, n\rrbracket}$ satisfy
        \begin{equation} \label{eq: vertices.own.cell}
            v_j \in \mathscr{C}_j(v) \setminus \bigcup_{i \neq j} \mathscr{C}_i(v);
        \end{equation}
        \item if $x_i^0$ is not a vertex, then it doesn't lie on any face of two adjacent cells.
    \end{enumerate}
    Then the map $\upsigma:\llbracket 1, n\rrbracket\to\llbracket 1, \kappa\rrbracket$ which is so that $x_i^0\in\mathscr{C}_{\upsigma(i)}(v)$, is well-defined, and particles evolving according to \eqref{HSA} satisfy
    \begin{equation*}
        x_i^t = \left( \prod_{\tau=0}^{t-1} (1 - \gamma^\tau) \right) x^0_i + \sum_{\tau=0}^{t-1} \left( \gamma^\tau \prod_{s=\tau+1}^{t-1} (1 - \gamma^s) \right) v_{\upsigma(i)}
    \end{equation*}
    for all $i\in\llbracket 1, n\rrbracket$. 
    
    In particular, $x_i^t$ converges to $v_{\upsigma(i)}$ at least exponentially fast as $t\to+\infty$.
\end{theorem}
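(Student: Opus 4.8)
The plan is to argue by induction on $t$ that the configuration at time $t$ has the same "cell structure" as the configuration at time $0$ — namely that (i) the vertices $v_1,\dots,v_\kappa$ are still exactly the vertices of $\mathcal{K}^t$, (ii) each $v_j$ is fixed by the update, and (iii) each $x_i^t$ lies in the cell $\mathscr{C}_{\upsigma(i)}(v)$ of the \emph{original} vertex tuple $v$ (equivalently, its argmax in $\mathcal{K}^t$ is still $v_{\upsigma(i)}$). Once this invariant is in place, the update \eqref{HSA} becomes the linear recursion
\begin{equation*}
    x_i^{t+1} = (1-\gamma^t)\,x_i^t + \gamma^t\, v_{\upsigma(i)},
\end{equation*}
whose closed-form solution is precisely the stated formula (a routine unrolling of the recursion, using that $v_{\upsigma(i)}$ is constant in $\tau$). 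Exponential convergence then follows since $\prod_{\tau=0}^{t-1}(1-\gamma^\tau)\to 0$ whenever the $\gamma^\tau$ stay bounded away from $0$, and more generally the coefficient of $x_i^0$ tends to $0$ while the coefficients of $v_{\upsigma(i)}$ sum to $1-\prod(1-\gamma^\tau)\to 1$; in fact if $\inf_\tau \gamma^\tau = \gamma_* > 0$ one gets the geometric bound $\|x_i^t - v_{\upsigma(i)}\| \le (1-\gamma_*)^t \|x_i^0 - v_{\upsigma(i)}\|$, and "super-exponential" when $\gamma^\tau \to 1$.

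The heart of the argument is the induction step, which splits into two geometric facts. First, each vertex is stationary: since $v_j \in \mathscr{C}_j(v)$, we have $\underset{y\in\mathcal{K}^0}{\argmax}\,\ip{Bv_j}{y} = v_j$ (and this argmax is unique by assumption (1), the condition $v_j \notin \bigcup_{i\ne j}\mathscr{C}_i(v)$, which forces $v_j$ to be the strict maximizer), so the update leaves $v_j$ unchanged; hence the vertices of $\mathcal{K}^0$ are still present at time $1$, and by Lemma~\ref{lem:convHullDecreases} $\mathcal{K}^1 \subseteq \mathcal{K}^0$, so $\mathcal{K}^1$ has exactly the vertex set $\{v_1,\dots,v_\kappa\}$ (no new vertices can appear, none of the old ones moved). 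Second, an interior particle stays in its cell: if $x_i^t \in \mathscr{C}_{\upsigma(i)}(v)$, then its argmax over $\mathcal{K}^t$ — which by the previous point equals its argmax over $\conv\{v_1,\dots,v_\kappa\} = \mathcal{K}$ — is $v_{\upsigma(i)}$, so $x_i^{t+1}$ is a convex combination of $x_i^t$ and $v_{\upsigma(i)}$, i.e.\ it moves along the segment $[x_i^t, v_{\upsigma(i)}]$ toward the defining vertex. The key point is that this whole segment lies in the closed cell $\mathscr{C}_{\upsigma(i)}(v)$: the cells \eqref{eq: cells} are intersections of $\mathcal{K}$ with half-spaces $\{x : \ip{B x}{v_{\upsigma(i)} - v_k} \ge 0\}$ (using symmetry of $B$ to move the linear functional onto $x$), hence convex, and both endpoints belong to it, so $x_i^{t+1}\in\mathscr{C}_{\upsigma(i)}(v)$, closing the induction.

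The main obstacle — and the only place assumption (2) is needed — is the uniqueness of the argmax along the orbit, i.e.\ ensuring $\#\mathscr{C}_i^t = 1$ so that $x_i^{t+1}$ is genuinely the single point $(1-\gamma^t)x_i^t + \gamma^t v_{\upsigma(i)}$ and the dynamics are well-defined. By Lemma~\ref{lem:singleLeader} this holds for a.e.\ initial configuration, but here we want it for \emph{every} configuration satisfying (1)–(2). The subtlety is that $x_i^t$ could, a priori, drift onto a shared face $\mathscr{C}_j(v)\cap\mathscr{C}_k(v)$ of two adjacent cells (where the argmax is the edge $[v_j,v_k]$, not a vertex). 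Assumption (2) rules this out at $t=0$; one must check it is preserved. This follows because the motion $x_i^t \mapsto (1-\gamma^t)x_i^t + \gamma^t v_{\upsigma(i)}$ is a contraction toward $v_{\upsigma(i)}$, which lies in the \emph{relative interior} of $\mathscr{C}_{\upsigma(i)}(v)$ (again by (1)): a strict convex combination of a point not on the shared face with the "deep interior" point $v_{\upsigma(i)}$ cannot land on the shared face — formally, the defining inequalities $\ip{B x}{v_{\upsigma(i)} - v_k} \ge 0$ hold \emph{strictly} at $v_{\upsigma(i)}$ for all $k\ne\upsigma(i)$, and strict inequality is preserved under convex combination with any point where the inequality is at least non-strict, as long as $\gamma^t > 0$. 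Hence if $x_i^t$ avoids all shared faces so does $x_i^{t+1}$, completing the verification that the update is single-valued and the closed-form holds for all $t$.
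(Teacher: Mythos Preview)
Your proposal is correct and follows essentially the same approach as the paper: show the vertices are stationary (by assumption (1)), use convexity of the cells $\mathscr{C}_j(v)$ (from \Cref{lem: cells}) to conclude that $x_i^{t+1} = (1-\gamma^t)x_i^t + \gamma^t v_{\upsigma(i)}$ stays in $\mathscr{C}_{\upsigma(i)}(v)$, and iterate. Your treatment of the uniqueness issue (preservation of strict inequalities under convex combination with $v_{\upsigma(i)}$) is in fact more explicit than the paper's, which dispatches this in the parenthetical ``and in fact, remains in the interior of the cell''.
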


We provide the proof in {\bf\Cref{sec: proof.exp.fast}}, which straightforwardly follows after studying some geometric properties of the cells defined in \eqref{eq: cells}. We also comment on the possible genericity of the first condition in the statement in \Cref{prop: d.to.infty}.

\begin{remark}[Time-dependent key-query]
    The key takeaway from the proof of \Cref{thm: exp.fast.polytope} is that, due to the assumptions on the initial polytope, particles originating from the vertices remain fixed, while particles inside a cell move toward the cell’s vertex via linear interpolation. This constitutes one step, and since $B^t$ (and thus the polytope $\mathcal{K}^t$) is constant, the argument can be iterated. 

    An extension to time-dependent $B^t \succ 0$ can be envisioned, but it introduces technical complications that we leave for future work. In particular, if the sequence $(B^t)_t$ evolves as $B^t = \beta^t B$ with $\beta^t > 0$, then the inner product structure is preserved and the vertices of the initial polytope remain the unique maximizers within their respective cells. In this case, the argument extends directly. More generally, if $(B^t)_t$ preserves the ordering of the inner products $\langle B^t x, v_i \rangle$ among the initial vertices $v_i$ for all $x\in\mathcal{K}^t$, then the cell structure remains unchanged over time. However, under weaker assumptions—such as monotonicity of the sequence—while particles still move linearly toward local maximizers at each step, the identity of the vertices ought to vary with $t$ and must be tracked accordingly.
\end{remark}

\subsection{The cells}

\begin{figure}[h!]
    \centering
    \includegraphics[scale=0.3]{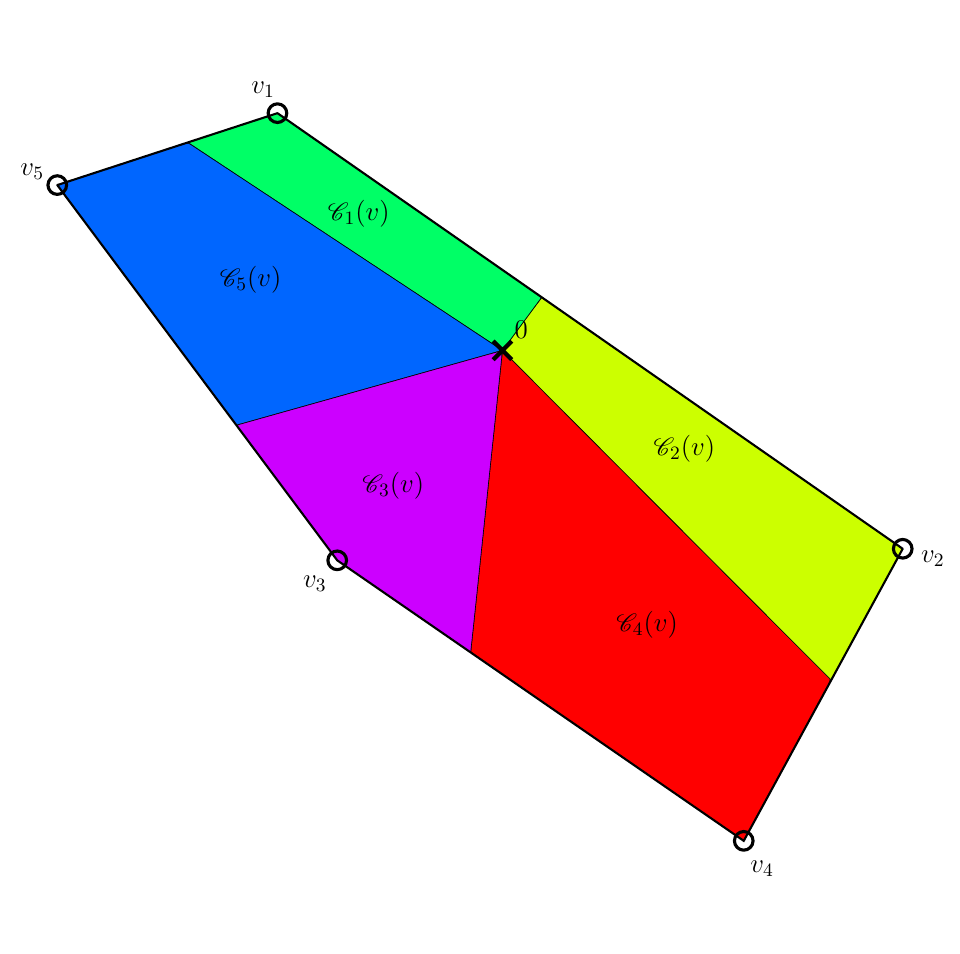}
    \includegraphics[scale=0.3]{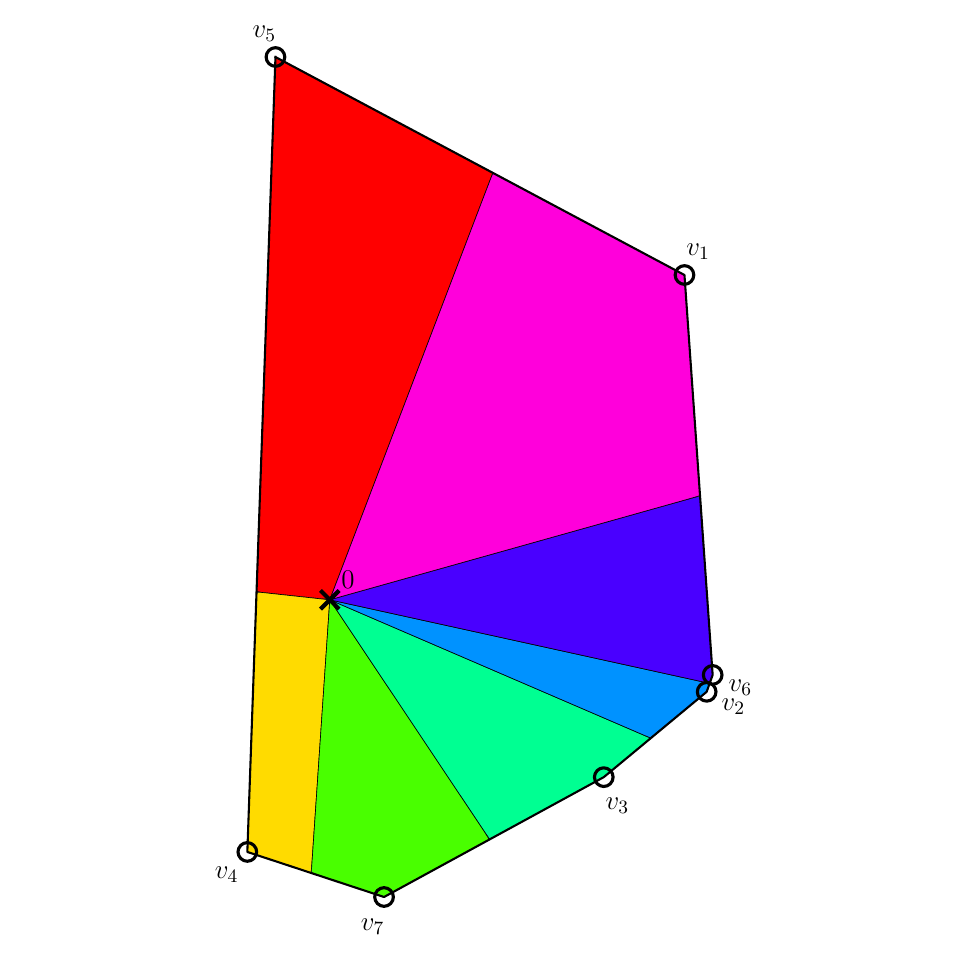}
    \includegraphics[scale=0.3]{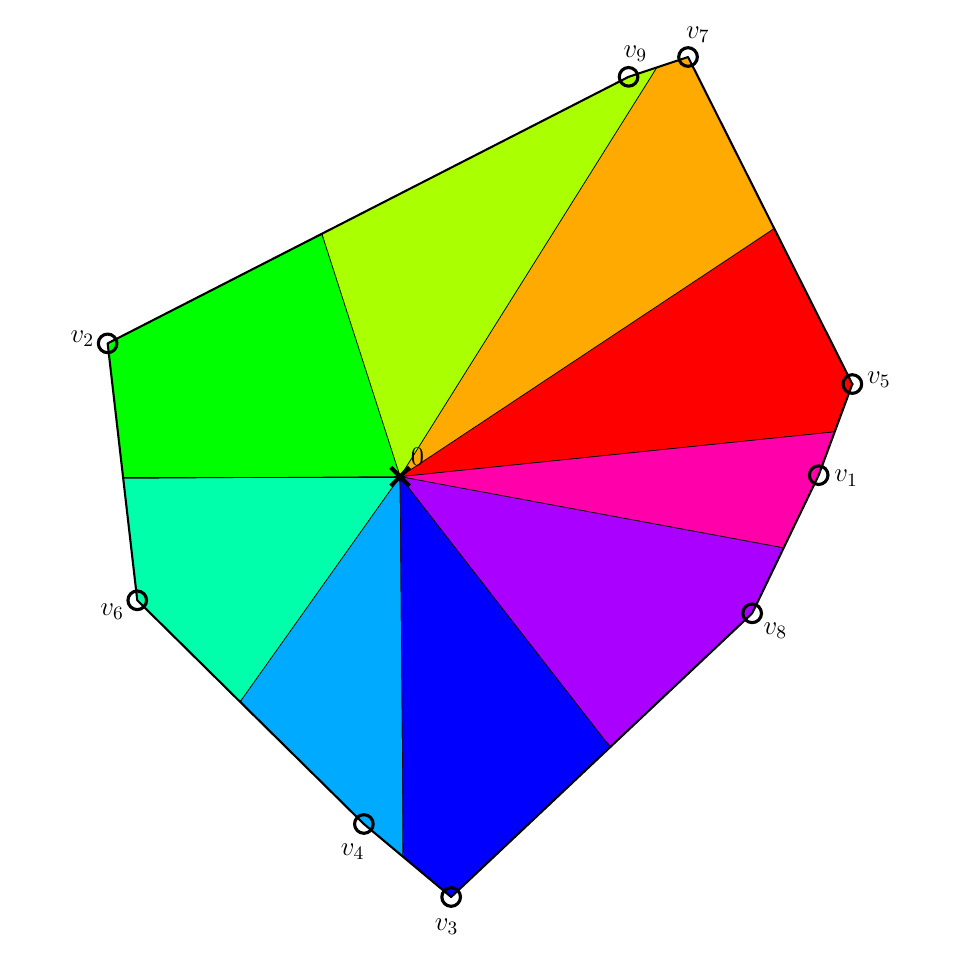}
    \includegraphics[scale=0.3]{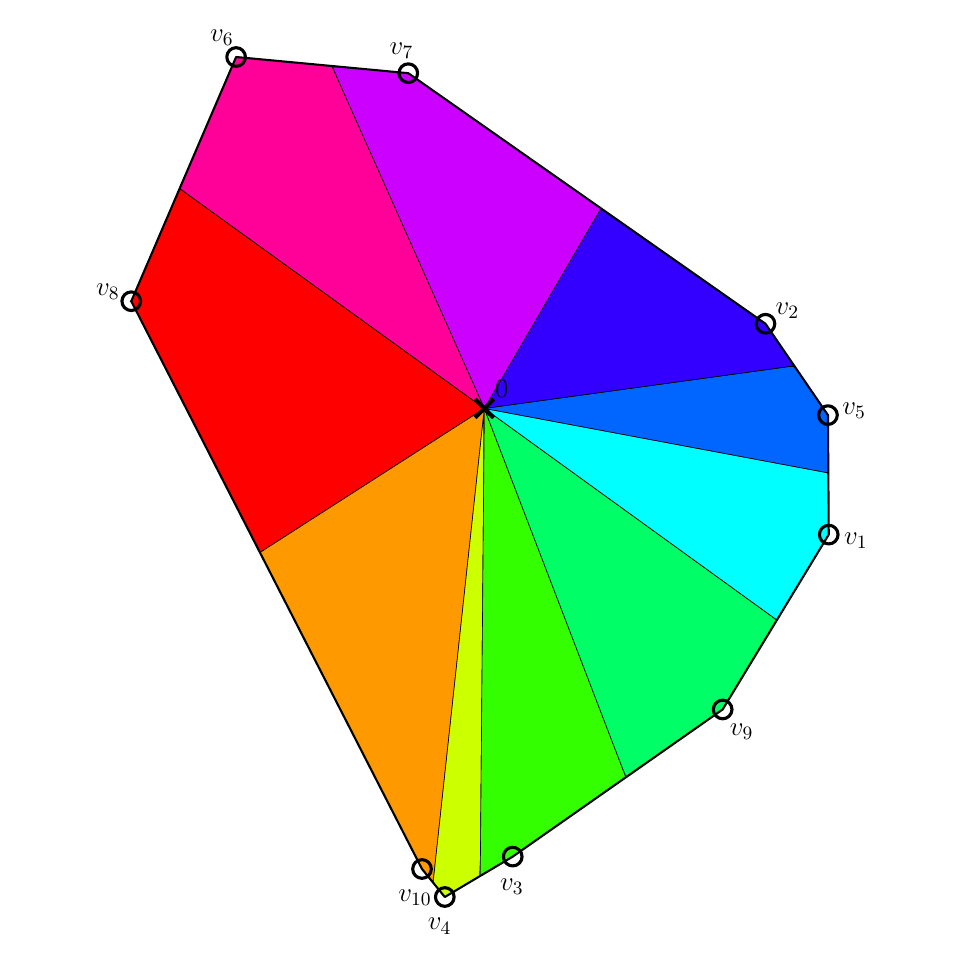}
    \caption{Each panel shows the cells $\mathscr{C}_i(v)$ (colored). Left to right, top to bottom: $\kappa=5$, $7$, $9$, and $10$.  
    Axes are suppressed for visual clarity; all plots are rendered on the same metric scale. Code available at \href{https://github.com/borjanG/2025-transformers-frank-wolfe}{\color{blue}https://github.com/borjanG/2025-transformers-frank-wolfe}.}

    \label{fig: first.cells}
\end{figure}

We begin by studying the geometry of the cells $\mathscr{C}_i(v)$ in \eqref{eq: cells}.
Throughout this section, $\mathcal{K}\subset\R^d$ is a convex polytope with vertices $v=(v_1,\ldots,v_\kappa)$, and $B\succ0$.

\begin{lemma}[Cell geometry] \label{lem: cells}
The cells $\mathscr{C}_i(v)$ satisfy the following properties.
\begin{enumerate}
    \item Each cell $\mathscr{C}_i(v)$ is convex.
    \item They have pairwise disjoint interiors: $\operatorname{int}(\mathscr{C}_i(v) \cap \mathscr{C}_j(v)) = \varnothing$ for $i \neq j$.
    \item They form a partition of $\mathcal{K}$---more specifically, $\bigcup_{i\in\llbracket 1,\kappa\rrbracket} \mathscr{C}_i(v) = \mathcal{K}$.
\end{enumerate}
\end{lemma}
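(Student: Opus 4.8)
The plan is to exploit that $y \mapsto \langle Bx, y\rangle$ is linear, so that for every fixed $x$ its maximum over $\mathcal{K}$ is attained at a vertex of $\mathcal{K}$. I would first record that, consequently,
\begin{equation*}
    \mathscr{C}_i(v) = \left\{ x \in \mathcal{K} \colon \langle Bx, v_i\rangle \geq \langle Bx, v_j\rangle \ \text{ for all } j \in \llbracket 1, \kappa\rrbracket \right\} = \mathcal{K} \cap \bigcap_{j\in\llbracket 1, \kappa\rrbracket} H_{ij}^+,
\end{equation*}
where, using $B = B^\top$, one sets $H_{ij}^+ \coloneqq \{ x \in \R^d \colon \langle x, B(v_i - v_j)\rangle \geq 0\}$ (which is all of $\R^d$ when $j = i$). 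All three claims then follow quickly from this representation.

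For (1): $\mathscr{C}_i(v)$ is a finite intersection of the convex set $\mathcal{K}$ with closed half-spaces, hence convex---indeed, again a polytope. For (3): given $x \in \mathcal{K}$, choosing an index $i$ attaining $\max_{j} \langle Bx, v_j\rangle$ puts $x \in \mathscr{C}_i(v)$, so $\mathcal{K} \subseteq \bigcup_i \mathscr{C}_i(v)$; the reverse inclusion is immediate since each $\mathscr{C}_i(v)\subseteq\mathcal{K}$ by definition.

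For (2): if $x \in \mathscr{C}_i(v)\cap\mathscr{C}_j(v)$ with $i\neq j$, then $\langle Bx, v_i\rangle = \langle Bx, v_j\rangle$, i.e. $x$ lies on $H_{ij} \coloneqq \{x\in\R^d : \langle x, B(v_i-v_j)\rangle = 0\}$. This is a genuine $(d-1)$-dimensional hyperplane, because $v_i\neq v_j$ (distinct vertices) and $B\succ0$ together force the normal vector $B(v_i-v_j)$ to be nonzero; hence $\mathscr{C}_i(v)\cap\mathscr{C}_j(v)\subseteq H_{ij}$ has empty interior.

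There is no genuine obstacle here---the statement is elementary convex geometry. The one point deserving care is the meaning of $\operatorname{int}$ in (2) when $\mathcal{K}$ is not full-dimensional, where it should be read as interior relative to the affine hull of $\mathcal{K}$; the conclusion still holds, because $\mathcal{K}\not\subseteq H_{ij}$ (otherwise every vertex would satisfy $\langle v_k, B(v_i-v_j)\rangle=0$, and subtracting the identities for $k=i$ and $k=j$ would give $\langle v_i-v_j, B(v_i-v_j)\rangle = 0$, contradicting $B\succ0$). This last step is also the only place where positive-definiteness of $B$, rather than mere symmetry, is genuinely used.
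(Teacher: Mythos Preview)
Your proof is correct and follows essentially the same route as the paper's. The paper verifies convexity directly by checking that a convex combination of two points in $\mathscr{C}_i(v)$ stays in $\mathscr{C}_i(v)$, whereas you first record the half-space representation $\mathscr{C}_i(v)=\mathcal{K}\cap\bigcap_j H_{ij}^+$ and then invoke that intersections of convex sets are convex; the arguments for (2) and (3) are identical in spirit. Your additional remark on relative interiors when $\mathcal{K}$ is not full-dimensional is a nice point that the paper does not address.
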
 

\begin{proof}[Proof of \Cref{lem: cells}]
We begin by showing the first point. 
Let $x_1, x_2 \in \mathscr{C}_i(v)$ and $\lambda \in [0,1]$. For all $y \in \mathcal{K}$, we have
    \[
    \langle Bx_1, v_i \rangle \geq \langle Bx_1, y \rangle, \quad \langle Bx_2, v_i \rangle \geq \langle Bx_2, y \rangle.
    \]
    Taking a convex combination,
    \[
    \langle B(\lambda x_1 + (1-\lambda) x_2), v_i \rangle = \lambda \langle Bx_1, v_i \rangle + (1-\lambda) \langle Bx_2, v_i \rangle
    \]
    and similarly for $\langle B(\lambda x_1 + (1-\lambda) x_2), y \rangle$. Thus,
    \[
    \langle B(\lambda x_1 + (1-\lambda) x_2), v_i \rangle \geq \langle B(\lambda x_1 + (1-\lambda) x_2), y \rangle,
    \]
    showing that $\lambda x_1 + (1-\lambda) x_2 \in \mathscr{C}_i(v)$. Hence $\mathscr{C}_i(v)$ is convex.

    We now show the second point. Suppose $\operatorname{int}(\mathscr{C}_i(v) \cap \mathscr{C}_j(v)) \neq \varnothing$ for $i \neq j$. Then there exists $x \in \operatorname{int}(\mathscr{C}_i(v) \cap \mathscr{C}_j(v))$. In particular,
    \[
    \langle Bx, v_i \rangle \geq \langle Bx, y \rangle \quad \text{and} \quad \langle Bx, v_j \rangle \geq \langle Bx, y \rangle \quad \forall y \in \mathcal{K}.
    \]
    In particular, taking $y = v_j$ and $y = v_i$ respectively, we get
    \[
    \langle Bx, v_i \rangle \geq \langle Bx, v_j \rangle, \quad \langle Bx, v_j \rangle \geq \langle Bx, v_i \rangle,
    \]
    thus $\langle Bx, v_i \rangle = \langle Bx, v_j \rangle$. Now, if $B$ is full rank, the face 
    \begin{equation*}
    \mathsf{F}_{i\between j} \coloneqq \{ x \in \mathcal{K} \colon \langle Bx, v_1  - v_2 \rangle=0 \}
    \end{equation*}
    is a hyperplane, hence has empty interior, contradicting the assumption that $x$ lies in the interior.

    We conclude by showing the third point.
    Let $x \in \mathcal{K}$. Then, since $\{v_i\}_{i\in\llbracket 1, \kappa\rrbracket}$ is a finite set, there exists $v_i$ such that
    \[
    \langle Bx, v_i \rangle \geq \langle Bx, v_j \rangle \quad \forall j \in \llbracket 1, \kappa\rrbracket.
    \]
    Thus $x \in \mathscr{C}_i(v)$, and so $\mathcal{K} \subseteq \bigcup_{i\in\llbracket 1,\kappa\rrbracket} \mathscr{C}_i(v)$.
\end{proof}

\subsubsection*{Further observations on the geometry of the cells}

One can naturally ask if the cells $\mathscr{C}_i(v)$ coincide with well-known tessellations, such as the Voronoi diagram. It turns out that this is indeed the case if the vertices all lie on the same level set of 
\begin{equation*}
    \mathsf{J}(x) \coloneqq \frac12 \langle Bx,x\rangle.
\end{equation*}

\begin{proposition}[Voronoi cells] \label{prop: voronoi}
Suppose $B\succ0$, let $v=(v_1,\ldots, v_{\kappa})$ be the vertices of the convex polytope $\mathcal{K}\subset\R^d$, and suppose that $\mathsf{J}(v_i) = c > 0$. Define the $B$-norm Voronoi cells
\[
\mathsf{Vor}_B(v_i) \coloneqq \left\{x \in \mathbb{R}^d : \|x - v_i\|_B \leq \|x - v_j\|_B \quad \forall j \in \llbracket 1, \kappa\rrbracket\right\}.
\]
Then, for each $i \in \llbracket 1, \kappa\rrbracket$, 
\[
\mathscr{C}_i(v) = \mathsf{Vor}_B(v_i) \cap \mathcal{K}.
\]
\end{proposition}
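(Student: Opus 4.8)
The plan is to reduce the distance comparison defining $\mathsf{Vor}_B(v_i)$ to the linear comparison defining $\mathscr{C}_i(v)$, using the level-set hypothesis to kill the quadratic-in-$v$ terms. Concretely, since $B$ is symmetric (being positive definite), for any $x\in\R^d$ and any vertex $v_j$ one expands
\[
\|x-v_j\|_B^2 = \langle B(x-v_j), x-v_j\rangle = \langle Bx,x\rangle - 2\langle Bx,v_j\rangle + \langle Bv_j,v_j\rangle.
\]
The hypothesis $\mathsf{J}(v_j)=c$ means $\langle Bv_j,v_j\rangle = 2c$ for every $j$, so
\[
\|x-v_j\|_B^2 = \|x\|_B^2 + 2c - 2\langle Bx,v_j\rangle.
\]
The first two terms on the right are independent of $j$, hence for any $i,j$ one has $\|x-v_i\|_B \le \|x-v_j\|_B$ if and only if $\langle Bx,v_i\rangle \ge \langle Bx,v_j\rangle$. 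Taking the conjunction over all $j\in\llbracket 1,\kappa\rrbracket$ gives the clean identification
\[
\mathsf{Vor}_B(v_i) = \Bigl\{x\in\R^d \colon \langle Bx,v_i\rangle \ge \langle Bx,v_j\rangle \ \ \forall j\in\llbracket 1,\kappa\rrbracket\Bigr\}.
\]

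Next I would recall the standard fact that a linear functional over a polytope attains its maximum at a vertex: for every $x$, $\max_{y\in\mathcal{K}}\langle Bx,y\rangle = \max_{j\in\llbracket 1,\kappa\rrbracket}\langle Bx,v_j\rangle$, since $\mathcal{K}=\conv\{v_1,\ldots,v_\kappa\}$ and any $y\in\mathcal{K}$ is a convex combination of the $v_j$. Consequently, for $x\in\mathcal{K}$, the defining condition $\langle Bx,v_i\rangle = \max_{y\in\mathcal{K}}\langle Bx,y\rangle$ of \eqref{eq: cells} is equivalent to $\langle Bx,v_i\rangle \ge \langle Bx,v_j\rangle$ for all $j\in\llbracket 1,\kappa\rrbracket$ — exactly the inequalities cutting out $\mathsf{Vor}_B(v_i)$. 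Intersecting the latter with $\mathcal{K}$ therefore yields $\mathscr{C}_i(v) = \mathsf{Vor}_B(v_i)\cap\mathcal{K}$, which is the claim.

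There is essentially no serious obstacle here; the proof is a one-line algebraic cancellation followed by the polytope LP fact. The only points that need care are: (i) invoking symmetry of $B$ so that the cross term is genuinely $2\langle Bx,v_j\rangle$ and not $\langle Bx,v_j\rangle + \langle v_j, Bx\rangle$ with a different pairing — this is where $B\succ0$ (hence symmetric) is used, not merely its positivity; and (ii) being explicit that $\|\cdot\|_B$ is a genuine norm, so that comparing squared $B$-distances is equivalent to comparing $B$-distances, which again uses $B\succ0$. Positivity of the constant $c$ is not actually needed for the argument (any common value of $\mathsf{J}$ on the vertices works), but it is harmless and consistent with $B\succ0$ and $v_i\neq 0$.
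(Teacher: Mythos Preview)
Your proof is correct and follows essentially the same approach as the paper: expand $\|x-v_j\|_B^2$ via the polarization identity, use the level-set hypothesis $\langle Bv_j,v_j\rangle = 2c$ to cancel the $j$-dependent quadratic term, and reduce the Voronoi inequality to the linear inequality $\langle Bx,v_i\rangle \ge \langle Bx,v_j\rangle$. If anything your presentation is slightly more direct, since you compare only vertices and then invoke the LP fact explicitly, whereas the paper routes through general $y\in\mathcal{K}$ with $\|y\|_B^2 \le 2c$ before specializing.
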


\begin{figure}[h!]
    \centering
    \includegraphics[scale=0.3]{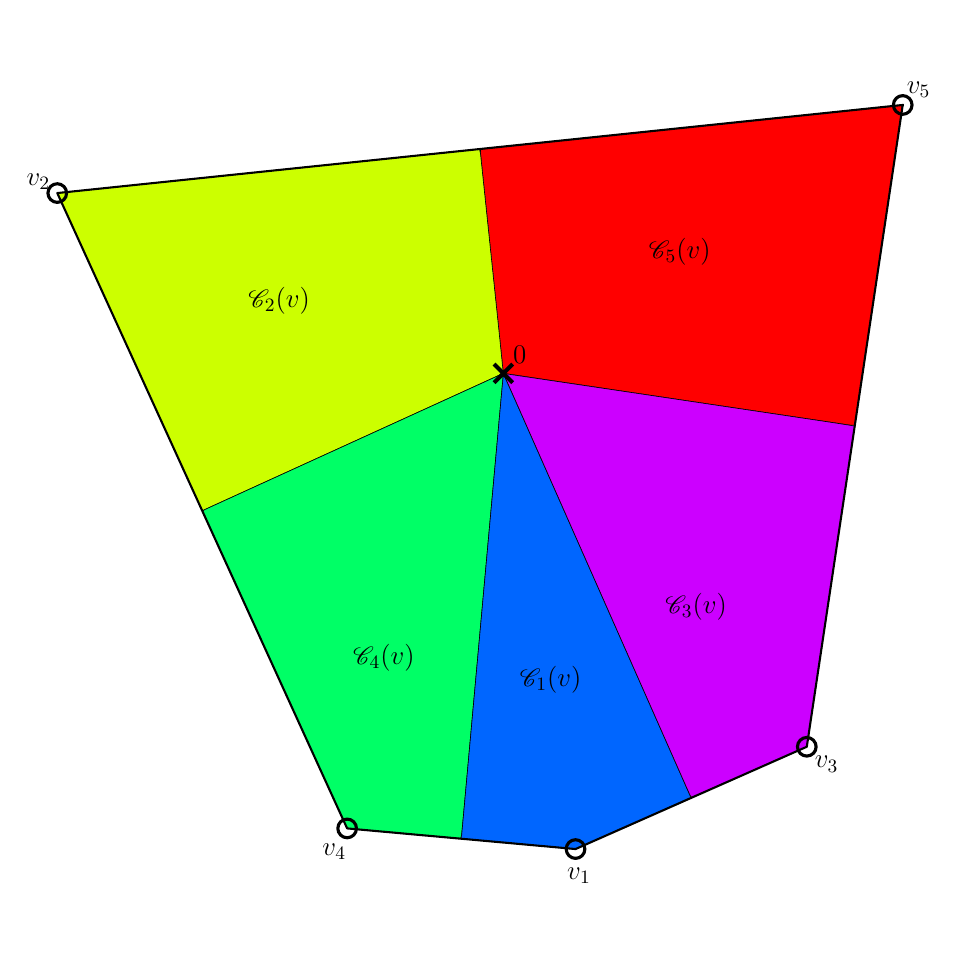}
    \includegraphics[scale=0.3]{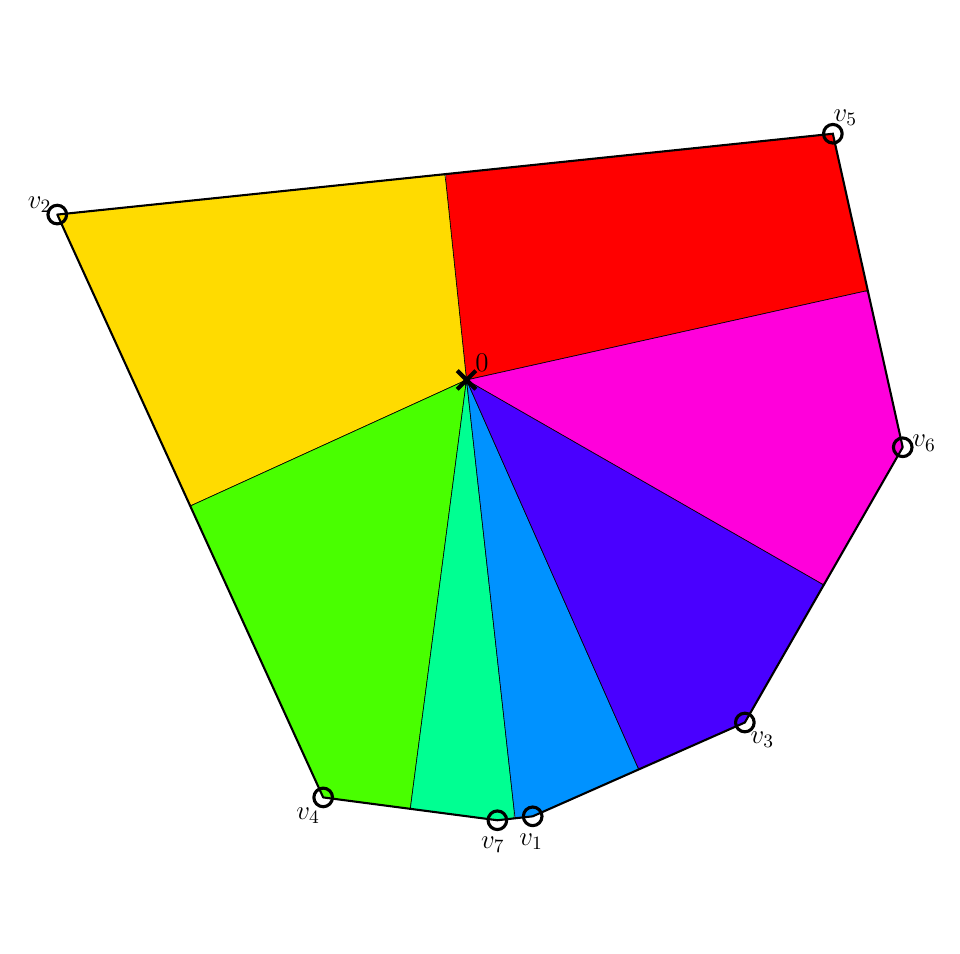}
    \includegraphics[scale=0.3]{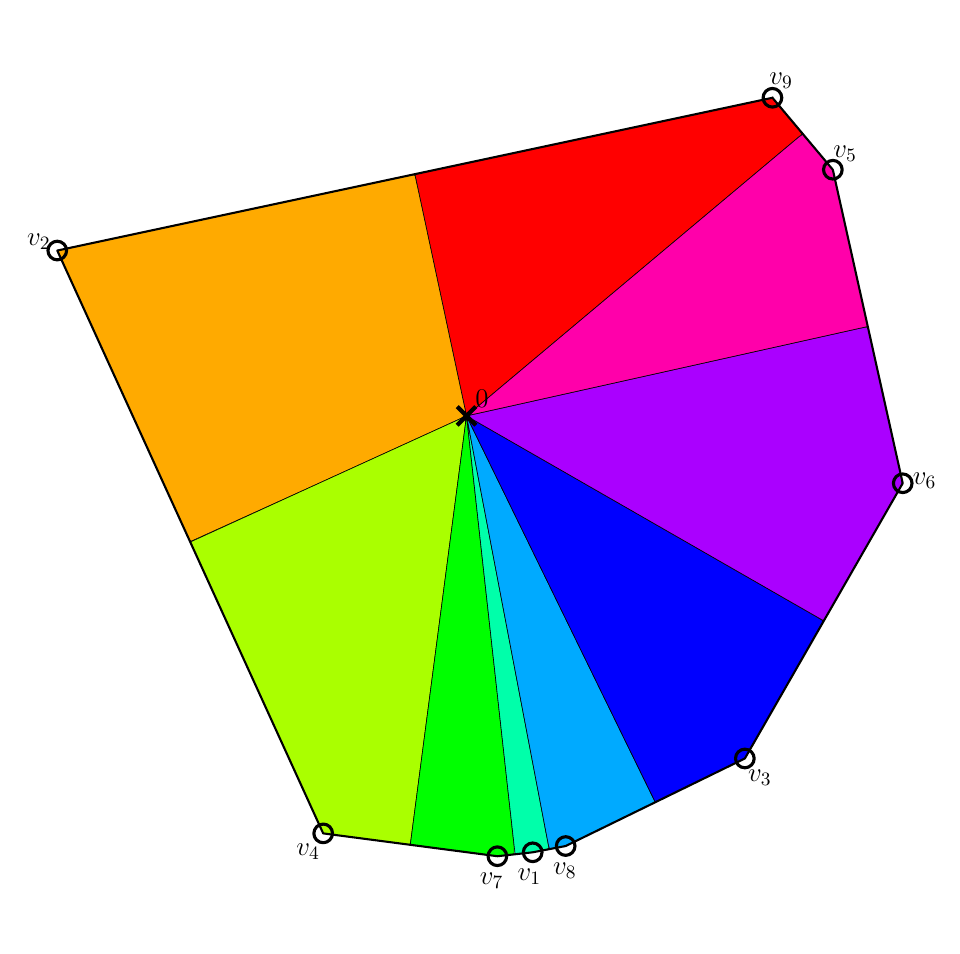}
    \includegraphics[scale=0.3]{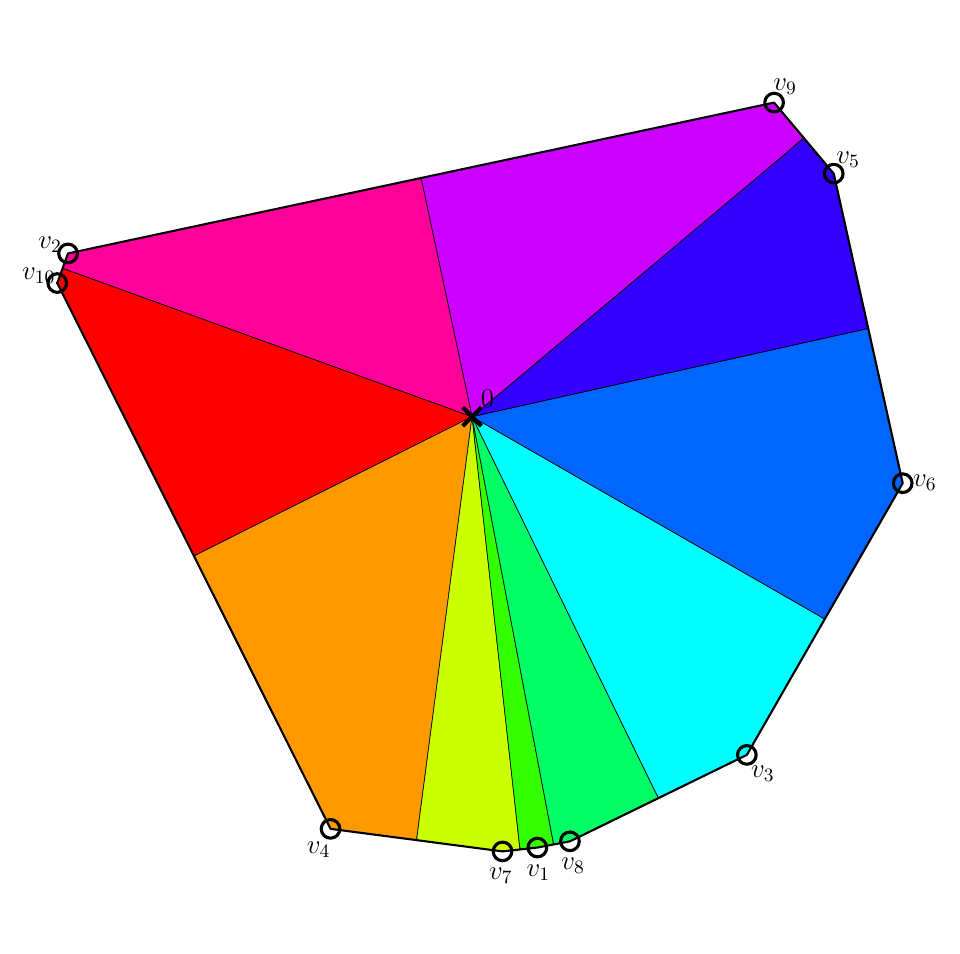}
    \caption{Here every vertex lies on $\mathbb{S}^1$, so the cells coincide with the classical Voronoi partition in $\mathbb{R}^2$ intersected with the polygon $\mathcal{K}$.  Because points of equal length compete only by direction, the cells are radially symmetric wedges truncated by the boundary of $\mathcal{K}$.  Panels correspond to the same values of $\kappa$ as in Figure \ref{fig: first.cells}: $5$, $7$, $9$, and $10$.  Comparing the two figures highlights the distortion introduced when vertex norms differ: equalizing the norms “untwists’’ the cells, restoring the Voronoi structure inside the polytope.}
\end{figure}  

\begin{proof}[Proof of \Cref{prop: voronoi}]
For any $x \in \mathbb{R}^d$ and $y \in \mathcal{K}$, the polarization identity gives
\[
2\langle Bx, y \rangle = \|x\|_B^2 + \|y\|_B^2 - \|x - y\|_B^2.
\]
Since $\|v_i\|_B^2 = 2c$ for all $i \in \llbracket 1, \kappa\rrbracket$, 
\[
2\langle Bx, v_i \rangle = \|x\|_B^2 + 2c - \|x - v_i\|_B^2,
\]
and for any $y \in \mathcal{K}$,
\[
2\langle Bx, y \rangle = \|x\|_B^2 + \|y\|_B^2 - \|x - y\|_B^2,
\quad \text{with} \quad \|y\|_B^2 \leq 2c.
\]
Now, comparing
\[
\langle Bx, v_i \rangle \geq \langle Bx, y \rangle
\]
is equivalent to
\[
\|x\|_B^2 + 2c - \|x - v_i\|_B^2 \geq \|x\|_B^2 + \|y\|_B^2 - \|x - y\|_B^2,
\]
which simplifies to
\[
2c - \|x - v_i\|_B^2 \geq \|y\|_B^2 - \|x - y\|_B^2.
\]
Since \(\|y\|_B^2 \leq 2c\), we get
\[
\|x - v_i\|_B^2 \leq \|x - y\|_B^2.
\]
Hence,
\[
\langle Bx, v_i \rangle \geq \langle Bx, y \rangle
\quad \Longleftrightarrow \quad
\|x - v_i\|_B \leq \|x - y\|_B
\quad \text{for all } y \in \mathcal{K}.
\]
Thus, $x \in \mathscr{C}_i(v)$ if and only if $x$ belongs to $\mathcal{K}$ and is closer (in $B$-norm) to $v_i$ than to any $y \in \mathcal{K}$.  
In particular, $x$ is closer to $v_i$ than to any $v_j$, meaning $x \in \mathsf{Vor}_B(v_i)$, and belongs to $\mathcal{K}$.
\end{proof}

\begin{figure}
    \centering
    \includegraphics[scale=0.3]{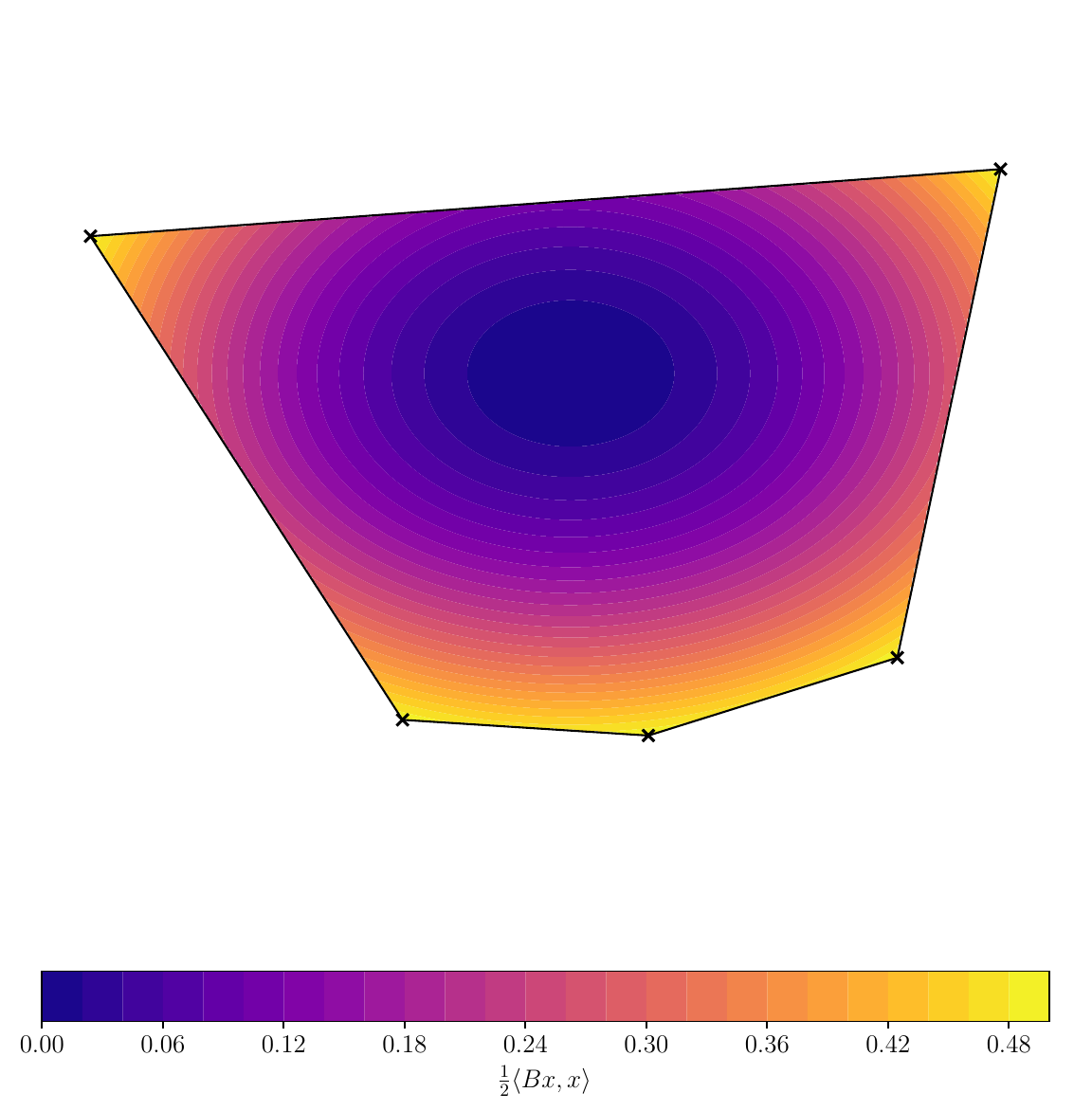}
    \includegraphics[scale=0.3]{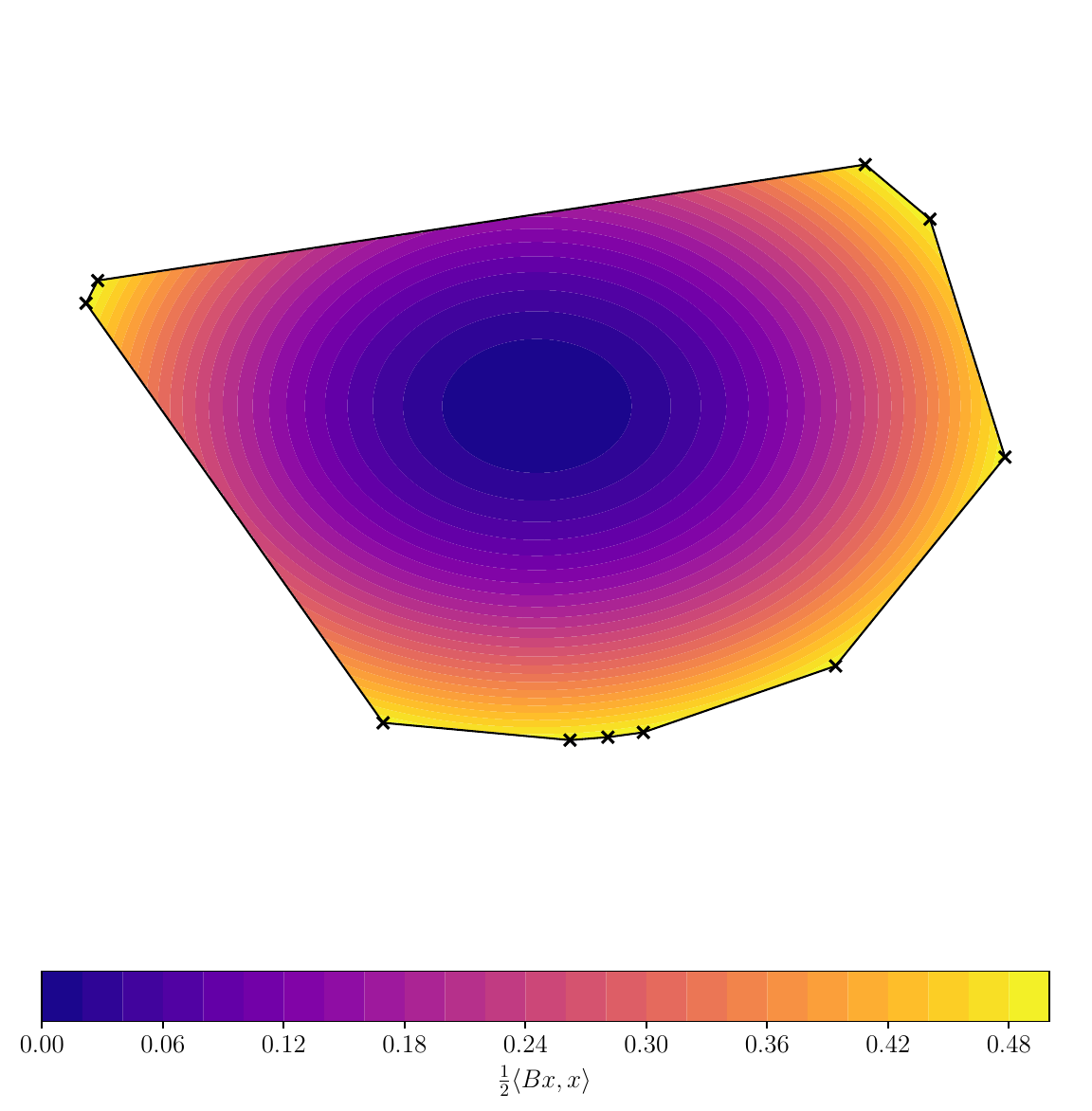}
    \caption{Plot of the quadratic \( x \mapsto \frac{1}{2} \langle Bx, x \rangle \) with \( B = \mathrm{diag}(1, 2) \), over the convex hull of 5 (\emph{left}) and 7 (\emph{right}) vertices.}
    \label{fig: quadratic}
\end{figure}

\subsubsection*{When do the vertices belong (only) to their own cell?}

It's not necessarily true that each vertex belongs to its own cell.
This leads to the natural question of determining conditions under which each vertex belongs to each own cell. We provide a couple of comments on this issue.

\begin{remark}[Gaussian vertices in high dimension] \label{prop: d.to.infty}
Let $v_1,\ldots,v_\kappa\overset{\mathrm{i.i.d.}}{\sim} \mathcal{N}(0, I_d)$ and $B\succ0$ with condition number bounded independently of $d$.
Assume that $\kappa$ is fixed while $d\to+\infty$.
Then, with probability tending to $1$ as $d\to+\infty$, for all $i \in \llbracket 1, \kappa\rrbracket$,
\[
\langle Bv_i, v_i \rangle > \langle Bv_i, v_j \rangle \quad \text{for all } j \neq i,
\]
that is, each vertex $v_i$ belongs to its own cell $\mathscr{C}_i(v)$ and none pair of two vertices belong to the same cell.

Indeed, since $B$ is positive definite with bounded condition number, there exist constants $0 < \lambda_{\min} \leq \lambda_{\max} < \infty$ independent of $d$ such that
\[
\lambda_{\min} \|x\|^2 \leq \langle Bx, x \rangle \leq \lambda_{\max} \|x\|^2 \quad \text{for all } x \in \mathbb{R}^d.
\]
For $v_i \sim \mathcal{N}(0, I_d)$, we have $\|v_i\|^2 \sim \chi^2_d$, which concentrates around $d$ with fluctuations of order $O(\sqrt{d})$, so
\[\langle Bv_i, v_i \rangle \in \left[\lambda_{\min} d - O(\sqrt{d}), \lambda_{\max} d + O(\sqrt{d})\right]
\]
with high probability. On the other hand, for $i \neq j$, since $v_i$ and $v_j$ are independent Gaussians, $\langle Bv_i, v_j \rangle = \langle v_j, Bv_i \rangle$
is a Gaussian random variable with zero mean and variance
\[
\mathbb{E}[\langle Bv_i, v_j \rangle^2] = \mathbb{E}\left[v_j^\top B v_i v_i^\top B v_j\right] = \mathrm{tr}(B^2) = O(d),
\]
so the typical size of $\langle Bv_i, v_j \rangle$ is of order $O(\sqrt{d})$. Thus, with high probability,
\[
\langle Bv_i, v_i \rangle - \langle Bv_i, v_j \rangle \geq \lambda_{\min} d - O(\sqrt{d})
\]
for all $j \neq i$. The inequality thus holds with probability tending to $1$ as $d \to +\infty$.
\end{remark}

\begin{remark}[An angle condition] \label{prop: polytope.condition} 
Suppose \( B = I_d \).
Fix a vertex \( v_i \) and consider all adjacent vertices:
\begin{equation} \label{eq: neigh}
\mathrm{neigh}(v_i) \coloneqq \left\{ v \in \{v_\ell\}_{\ell \in \llbracket 1, \kappa\rrbracket} \colon \text{ there exists an edge connecting } v \text{ and } v_i \right\}.    
\end{equation}
We have \( v_i \in \mathscr{C}_i(v) \) if and only if for every \( v \in \mathrm{neigh}(v_i) \), the angle between \( 0 \) and \( v \) centered at \( v_i \),
$\angle_{v_i}(v,0),$ satisfies
\[
\angle_{v_i}(v,0) < \frac{\pi}{2} \quad \text{for all } v \in \mathrm{neigh}(v_i).
\label{eq:cond.angle}
\]

More generally, if \( B\succ0 \), consider the level sets (ellipsoids) of $\mathsf{J}$:
\[
\mathscr{L}(v) \coloneqq \left\{ x \in \mathbb{R}^d : \frac{1}{2} \langle Bx, x \rangle = \frac{1}{2} \langle Bv, v \rangle \right\}.
\]
Let \( \mathscr{H}(v) \) be the hyperplane tangent to \( \mathscr{L}(v) \) at \( v \), with equation
$
\langle a_{v}, x \rangle + b_{v} = 0.
$
Then, $v_i \in \mathscr{C}_i(v)$ if and only if
\begin{equation}
\begin{aligned}
& \langle a_{v_i}, v \rangle + b_{v_i} > 0 \quad \text{for all } v \in \mathrm{neigh}(v_i) \cup \{0\}, \\
& \quad \text{or} \\
& \langle a_{v_i}, v \rangle + b_{v_i } < 0 \quad \text{for all } v \in \mathrm{neigh}(v_i) \cup \{0\}.
\end{aligned}
\tag{2} \label{eq:cond.hyperplane}
\end{equation}
The idea is that \eqref{eq:cond.hyperplane} is like being a local maximizer of $\mathsf{J}$. The second part can be shown by contradiction: the hyperplane tangent to the level set at the vertex is defined by the gradient at that point. Therefore, if two neighbors lie on different sides of this hyperplane, there must exist a direction along which the value increases. When \( B = I_d \), this reduces to the angle condition \eqref{eq:cond.angle}.

Note that condition \eqref{eq:cond.hyperplane} requires that all neighboring vertices lie strictly on one side of the hyperplane tangent to the level set of the vertex at the vertex itself. When \( B = I_d \), this simplifies to the angle condition \eqref{eq:cond.angle}.
\end{remark}

\subsection{Proof of \Cref{thm: exp.fast.polytope}} \label{sec: proof.exp.fast}

\begin{proof}[Proof of \Cref{thm: exp.fast.polytope}]

Note that the cell-assignment map $\upsigma$ is well-defined since the cells $\mathscr{C}_i(v)$ have mutually disjoint interiors (\Cref{lem: cells}).
By definition of the cells $\mathscr{C}_i(v)$, none of the vertices $v_1, \ldots, v_{\kappa}$ move along the evolution of \eqref{HSA}. 
Generally, since $x_i^0 \in \mathscr{C}_{\upsigma(i)}(v)$, the maximizer of $\langle Bx_i^0, y \rangle$ over $y\in\mathcal{K}$ is $v_{\upsigma(i)}$. In other words,
\[
\argmax_{y \in \mathcal{K}} \langle Bx_i^0, y \rangle = v_{\upsigma(i)},
\]
and thus
\[
x_i^1 = (1-\gamma^0)x_i^0 + \gamma^0 v_{\upsigma(i)}.
\]
By convexity of $\mathscr{C}_{\upsigma(i)}(v)$, and since both $x_i^0$ and $v_{\upsigma(i)}$ lie in $\mathscr{C}_{\upsigma(i)}(v)$, we conclude that $x_i^1 \in \mathscr{C}_{\upsigma(i)}(v)$ (and in fact, remains in the interior of the cell). Repeating this argument inductively yields the stated formula.
\end{proof}

\subsection{The ordinary differential equation}

The considerations of \Cref{thm: exp.fast.polytope}, interestingly, also allow us to make sense of a continuous-time version of \eqref{eq: hardmax.dynamics.V_ae}---a first guess is
\begin{equation} \label{eq: hardmax.ode}
    \dot{x}_i(t) = \argmax_{y \in \{x_j(t)\}_{j \in \llbracket 1, n\rrbracket}} \langle Bx_i(t), y \rangle - x_i(t) \hspace{1cm} t > 0.
\end{equation}
This is not a trivial question at first glance, since most of the classical ODE theory (Cauchy-Lipschitz, Osgood, DiPerna-Lions\ldots) does not apply---the right-hand side is not even continuous! One can ensure existence by looking for solutions in the class of Filippov solutions \cite{filippov2013differential}, but uniqueness is then an arduous procedure. 
We instead see that---under the conditions on the initial configuration as in \Cref{thm: exp.fast.polytope}---well-posedness can be ensured with elementary arguments.

\begin{theorem} \label{thm: ode}
    Let $B\succ0$ and consider an initial configuration $(x_i^0)_{i\in\llbracket 1, n\rrbracket}\in(\R^d)^n$ such that 
    \begin{enumerate}
        \item the vertices $v=(v_1, \ldots, v_{\kappa})$ of $\mathcal{K}\coloneqq\conv\{x_i^0\}_{i\in\llbracket 1, n\rrbracket}$ satisfy
        \begin{equation*}
            v_j \in \mathscr{C}_j(v) \setminus \bigcup_{i \neq j} \mathscr{C}_i(v);
        \end{equation*}
        \item if $x_i^0$ is not a vertex, then it doesn't lie on any face of two adjacent cells.
    \end{enumerate}
    Then for any $T>0$, the Cauchy problem for \eqref{eq: hardmax.ode} with data $x_i(0)=x_i^0$ admits a unique solution $(x_i(\,))_{i\in\llbracket 1,n\rrbracket}\in C^0([0, T];(\R^d)^n)$, which is continuous with respect to the initial data, and satisfies $x_i(t)\in\mathcal{K}$ for all $i\in\llbracket 1,n\rrbracket$ and $t\geq0$.
\end{theorem}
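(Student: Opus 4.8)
The plan is to reduce the ODE \eqref{eq: hardmax.ode} to a system of linear ODEs by exploiting exactly the geometric facts already used for \Cref{thm: exp.fast.polytope}. First I would record the two structural invariants: (i) vertices are stationary, since for each $j$ the maximizer of $\langle Bv_j, y\rangle$ over $y\in\mathcal{K}$ is $v_j$ itself (by \eqref{eq: vertices.own.cell}), so $\dot v_j = v_j - v_j = 0$; and (ii) the convex hull $\mathcal{K}(t)\coloneqq\conv\{x_i(t)\}_i$ does not change, because on any interval where the dynamics are well-defined every particle moves toward a point of $\mathcal{K}$, hence stays in $\mathcal{K}$, and the vertices never move, so $\mathcal{K}(t)\equiv\mathcal{K}$. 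Consequently the cells $\mathscr{C}_i(v)$ and the cell-assignment map $\upsigma$ (well-defined as in \Cref{thm: exp.fast.polytope}, using \Cref{lem: cells}) remain fixed in time, and the right-hand side for particle $i$ reads $\dot x_i = v_{\upsigma(i)} - x_i$ as long as $x_i(t)$ stays in the interior of $\mathscr{C}_{\upsigma(i)}(v)$.

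The key step is a bootstrap/continuation argument showing the particle never leaves that interior. Suppose $x_i(0)$ lies in $\operatorname{int}\mathscr{C}_{\upsigma(i)}(v)$ — which holds by hypothesis (2), since not lying on a face of two adjacent cells, combined with the partition property of \Cref{lem: cells}, forces interiority (for vertices themselves, \eqref{eq: vertices.own.cell} gives $x_i(0)=v_{\upsigma(i)}\in\operatorname{int}\mathscr{C}_{\upsigma(i)}(v)$ after noting the vertex lies in no other cell). On a maximal interval $[0,T^*)$ where $x_i(t)\in\operatorname{int}\mathscr{C}_{\upsigma(i)}(v)$, the ODE is the linear one $\dot x_i = v_{\upsigma(i)}-x_i$, whose explicit solution is
\begin{equation*}
x_i(t) = e^{-t}x_i(0) + (1-e^{-t})\,v_{\upsigma(i)},
\end{equation*}
a straight-line segment from $x_i(0)$ to $v_{\upsigma(i)}$, strictly interior for all $t\geq0$ because $\mathscr{C}_{\upsigma(i)}(v)$ is convex (\Cref{lem: cells}) and both endpoints are interior, so the open segment is interior. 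Hence the solution cannot exit, $T^*=+\infty$, and the formula above is the unique solution on all of $[0,\infty)$. Existence follows by simply verifying this explicit curve solves \eqref{eq: hardmax.ode}; uniqueness follows because any solution must, by continuity, remain in the (open) cell for a short time, where the equation is Lipschitz, and the interior-segment argument propagates this forward — any two solutions agreeing at $t=0$ satisfy the same linear ODE on $[0,\infty)$ and hence coincide. Continuity with respect to initial data is immediate from the closed-form expression, provided the perturbed datum still satisfies the genericity hypotheses (which is an open condition); restricting to $[0,T]$ and to $(\R^d)^n$ as in the statement is then automatic, and $x_i(t)\in\mathcal{K}$ holds since $x_i(t)$ is a convex combination of $x_i(0)\in\mathcal{K}$ and $v_{\upsigma(i)}\in\mathcal{K}$.

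The main obstacle is the continuation step: a priori the right-hand side of \eqref{eq: hardmax.ode} is discontinuous across cell faces, so one must argue that the trajectory genuinely stays away from those faces rather than sliding along them. The resolution is precisely that the linear flow $x\mapsto e^{-t}x+(1-e^{-t})v_{\upsigma(i)}$ maps $\operatorname{int}\mathscr{C}_{\upsigma(i)}(v)$ into itself for all $t\geq 0$ — a one-line consequence of convexity of the cell together with $v_{\upsigma(i)}$ being interior to it — so no face is ever touched and the Filippov/discontinuity pathology never materializes. Once that is in hand, everything else is routine. I would also remark that the same argument gives the exponential contraction rate $\|x_i(t)-v_{\upsigma(i)}\| = e^{-t}\|x_i(0)-v_{\upsigma(i)}\|$, the continuous-time analogue of the (super-)exponential convergence in \Cref{thm: exp.fast.polytope}.
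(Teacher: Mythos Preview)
Your argument is correct and in fact streamlines the paper's own proof. The paper proceeds in three parts: for existence it passes to the continuous limit of the discrete scheme via piecewise-affine interpolants and Arzelà--Ascoli; for uniqueness it runs essentially your bootstrap (bounding the velocity by $2\mathsf{d}(\mathcal{K})$ so that any solution cannot leave its cell on a short interval $[0,\delta/(2\mathsf{d}(\mathcal{K}))]$, where it must coincide with the linear ODE $\dot x_j=v_{\upsigma(j)}-x_j$, then iterating); and for continuity it proves a separate claim that small perturbations of the data preserve the $\argmax$ ordering. You bypass the compactness machinery entirely by writing down the explicit solution $x_i(t)=e^{-t}x_i^0+(1-e^{-t})v_{\upsigma(i)}$ and verifying it satisfies \eqref{eq: hardmax.ode}---more economical here precisely because the ODE becomes linear once the cell is identified. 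The uniqueness and continuity portions match the paper's in spirit.

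One terminological wrinkle: your claim that $v_{\upsigma(i)}\in\operatorname{int}\mathscr{C}_{\upsigma(i)}(v)$ is not literally true, since every vertex lies on $\partial\mathcal{K}$ and hence on the topological boundary of its cell. What you actually need (and implicitly use) is that $v_{\upsigma(i)}\in\mathscr{C}_{\upsigma(i)}(v)\setminus\bigcup_{j\neq\upsigma(i)}\mathscr{C}_j(v)$, i.e., the strict inequalities $\langle Bv_{\upsigma(i)},v_{\upsigma(i)}-v_j\rangle>0$ hold for all $j\neq\upsigma(i)$. This ``open cell'' is convex (an intersection of $\mathcal{K}$ with open half-spaces), contains both $x_i^0$ and $v_{\upsigma(i)}$ by hypothesis, and therefore contains the entire segment; so your invariance argument goes through unchanged once ``interior'' is read in this sense.
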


The proof may be found in {\bf \Cref{sec: proof.thm.ode}}.

The study of this equation is motivated by \cite[Section 8.1.2]{geshkovski2023emergence} (see also \cite[Problem 6]{geshkovski2025mathematical}), where the authors argue that this singular limit is the appropriate object to describe the long-time behavior of the continuous-time self-attention dynamics (under a specific rescaling in which $\beta$ amounts to $e^{2t}$). However, they do not establish well-posedness of the equation, nor do they justify the singular limit, due to potential non-uniqueness of the $\argmax$ in non-generic configurations (e.g., T-junctions formed by three particles).

\section{What about (soft) attention?} \label{sec: metastability}

The goal of this section is to transfer the results from the previous sections on the model \eqref{HSA} to the actual self-attention dynamics \eqref{eq: rescaled.Tformers}. 
We focus on the case\footnote{One could consider using a time-dependent step size $\gamma^t$ satisfying appropriate decay assumptions, but we avoid this in order to keep the arguments more transparent and we leave this extension to future work. The choice of key-query matrix, on the other hand, is motivated by a single geometric computation in \Cref{lem: cone.condition}. If an analogue of \Cref{lem: cone.condition} for the case $B \succ 0$ were found, the entire conclusion of what follows would generalize to that case as well.} $V^t = \gamma I_d$ and $B^t\equiv I_d$. The model then reads
\begin{equation} \label{eq: softmax.ODE}
    x_i^{t+1} = (1 - \gamma) x_i^t 
    + \gamma \sum_{j=1}^n 
    \frac{e^{\beta\langle x_i^t, x_j^t \rangle}}
         {\displaystyle \sum_{k=1}^n e^{\beta \langle x_i^t, x_k^t \rangle}} 
    x_j^t.\tag{SA$_\beta$}
\end{equation}
It turns out that proving convergence between the two models as $\beta\to+\infty$, particularly with a quantitative rate, is rather challenging. In fact, one cannot necessarily expect such convergence to hold in general on arbitrarily long time intervals. Indeed, 

\begin{proposition} \label{prop: origin}
    Suppose $\beta>0$. There exists some $\gamma_*\in(0,1)$ sufficiently small such that for all $\gamma\in(0,\gamma_*)$, the following holds.
    For any $(x_i^0)_{i\in\llbracket1,n\rrbracket}\in(\R^d)^n$ there exists $x^*\in\conv\{x_i^0\}_{i\in\llbracket1,n\rrbracket}$ such that for every $i\in\llbracket1,n\rrbracket$, particles following \eqref{eq: softmax.ODE} satisfy $x_i^t\to x^*$ as $t\to+\infty$.
\end{proposition}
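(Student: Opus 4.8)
The plan is to read \eqref{eq: softmax.ODE} as an inhomogeneous consensus (averaging) dynamics and to show that the diameter of the token cloud contracts at a uniform geometric rate once $\gamma$ is small and $\beta$ is fixed. First I would put the update in averaging form, $x_i^{t+1} = \sum_{j=1}^n A_{ij}^t x_j^t$ with $A^t \coloneqq (1-\gamma)I_n + \gamma P^t$ and $P_{ij}^t \coloneqq e^{\beta\langle x_i^t,x_j^t\rangle}/\sum_{k} e^{\beta\langle x_i^t,x_k^t\rangle}$. For $\gamma\in(0,1)$ each $A^t$ is row-stochastic with strictly positive entries, so $x_i^{t+1}$ is a convex combination of the $x_j^t$; hence $\mathcal{K}^{t+1}\coloneqq\conv\{x_j^{t+1}\}_j\subseteq\mathcal{K}^t$ (as in \Cref{lem:convHullDecreases}), and in particular $\|x_i^t\|\leq R\coloneqq\max_j\|x_j^0\|$ for all $t\geq0$. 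Since then $|\langle x_i^t,x_j^t\rangle|\leq R^2$, every attention weight is bounded below \emph{uniformly in $t$}: $P_{ij}^t\geq e^{-2\beta R^2}/n$, so $A_{ij}^t\geq\delta\coloneqq\gamma e^{-2\beta R^2}/n$ for $i\neq j$ (and $A_{ii}^t\geq1-\gamma$). This uniform ellipticity is the only place where $\beta<+\infty$ enters, and it is exactly the property that fails in the limit \eqref{HSA}.

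Next I would establish the geometric contraction of $D^t\coloneqq\diam(\mathcal{K}^t)=\max_{p,q}\|x_p^t-x_q^t\|$. Fix a pair $p,q$; using $\sum_j(A_{pj}^t-A_{qj}^t)=0$, decompose $x_p^{t+1}-x_q^{t+1}=\sum_j(A_{pj}^t-A_{qj}^t)x_j^t$ into its positive and negative parts to write $x_p^{t+1}-x_q^{t+1}=m_{pq}(u-w)$ with $u,w\in\mathcal{K}^t$ and $m_{pq}=1-\sum_j\min(A_{pj}^t,A_{qj}^t)$. Using $\min(A_{pj}^t,A_{qj}^t)\geq\delta$ for every $j$ — which holds once $\gamma_*$ is small enough that $1-\gamma\geq\delta$ — one gets $m_{pq}\leq\rho\coloneqq1-n\delta=1-\gamma e^{-2\beta R^2}\in(0,1)$, hence $\|x_p^{t+1}-x_q^{t+1}\|\leq\rho\,D^t$. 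Taking the maximum over $p,q$ yields $D^{t+1}\leq\rho D^t$, and so $D^t\leq\rho^t D^0\to0$.

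Finally, $(\mathcal{K}^t)_{t\geq0}$ is a nested sequence of nonempty compact convex sets with vanishing diameter, so $\bigcap_{t\geq0}\mathcal{K}^t$ is a single point $x^*\in\mathcal{K}^0=\conv\{x_j^0\}_j$; since $x_i^t$ and $x^*$ both lie in $\mathcal{K}^t$, we conclude $\|x_i^t-x^*\|\leq D^t\to0$ for every $i$, which is the claim.

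The argument has no deep step, but the point that requires care is that the averaging matrices $A^t$ are time-dependent, so no spectral/diagonalization shortcut is available: the contraction must be extracted through the Dobrushin-type estimate of the second paragraph, and this rests entirely on the uniform lower bound on the attention weights obtained in the first. (One could push $\gamma_*$ up toward $1$ with slightly more careful bookkeeping of the diagonal entries, but keeping $\gamma$ small makes the bound $m_{pq}\leq\rho$ immediate and is harmless for the remainder of \Cref{sec: metastability}.)
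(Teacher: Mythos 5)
Your proof is correct. Since the paper itself does not reproduce a proof but only points to \cite[Proposition 2.1]{geshkovski2024measure}, the comparison is indirect, but the Dobrushin-type contraction you carry out (row-stochastic averaging matrix $A^t=(1-\gamma)I+\gamma P^t$, uniform lower bound $A_{ij}^t\geq\delta=\gamma e^{-2\beta R^2}/n$ from boundedness of the convex hull, then the standard decomposition of $x_p^{t+1}-x_q^{t+1}$ into positive and negative parts yielding $D^{t+1}\leq(1-\gamma e^{-2\beta R^2})D^t$) is exactly the classical mechanism used for such collapse results in this line of work, and is almost certainly what the cited reference does mutatis mutandis. The only bookkeeping worth noting is that your constraint $1-\gamma\geq\delta$ is implied by $\gamma\leq 1/2$ independently of the initial data (since $\delta<\gamma$), so $\gamma_*=1/2$ works uniformly, as the statement requires; the contraction rate $\rho$ does depend on $R=\max_j\|x_j^0\|$, but that is permitted by the proposition and is in fact unavoidable.
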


The proof follows mutatis mutandis from that of \cite[Proposition 2.1]{geshkovski2024measure}.

\subsection{A Gumbel-like trick}

Instead, we proceed with a different but related idea, which is perhaps even more natural. This idea is motivated by the \emph{Gumbel trick}, which provides a convenient method for sampling from a categorical distribution. 

Concretely, let $(p_1,\ldots,p_n)$ be a categorical distribution over $\llbracket 1, n \rrbracket$, so 
\begin{equation*}
 p_i=\frac{e^{s_i}}{\displaystyle\sum_{k=1}^n e^{s_k}} 
\end{equation*}
for some scores $(s_1,\ldots,s_n) \in \mathbb{R}^n$. 
The Gumbel trick relies on the fact that the Gumbel distribution is precisely the noise distribution for which the expected maximum of perturbed scores recovers the log-partition function.
Specifically, if we draw independent Gumbel random variables $g_i \sim \mathrm{Gumbel}(0,1)$ for each $i \in \llbracket 1, n \rrbracket$, then
\[
    \mathbb{E}\left[ \max_{i \in \llbracket 1, n \rrbracket} \bigl( s_i + g_i \bigr) \right]
    = \log \left( \sum_{i=1}^n e^{s_i} \right).
\]
This trick can be used to sample from the categorical distribution by returning $\argmax_{i \in \llbracket 1, n \rrbracket} \bigl( s_i + g_i \bigr),$
since
\[
    \mathbb{P}\left( \argmax_{i \in \llbracket 1, n \rrbracket} (s_i + g_i) = j \right) = p_j.
\]
We are impelled to consider the \emph{self-attention process} \(\left\{ (x_1^t,\ldots,x_n^t) \right\}_{t \geq 0}\) defined by
\begin{equation} \label{eq: softmax.process}
\begin{cases}
\mathbb{P} \Bigl( x_i^{t+1} = (1-\gamma) x_i^t + \gamma x_j^t \Bigr)
= \dfrac{e^{\beta \langle x_i^t, x_j^t \rangle}}{\displaystyle \sum_{j=1}^n e^{\beta \langle x_i^t, x_j^t \rangle}}, \\[10pt]
x_i^0 = x_i^0.
\end{cases} \tag{SA$_{\mathbb{P}}$}
\end{equation}
Clearly this process is a Markov chain.

Interestingly, even though \eqref{eq: softmax.ODE} converges to a point asymptotically, for certain initial configurations, the system will remain “close” to the hardmax dynamics for a time interval that is at least exponentially large in \( \beta \). This is a manifestation of the so-called \emph{dynamic metastability} or \emph{slow motion} \cite{otto2007slow}, previously proven for the self-attention model on the unit sphere \cite{geshkovski2024dynamic,bruno2024emergence}. A difference on \( \mathbb{R}^d \) is that we can characterize the metastable state explicitly, via the hard-attention dynamics.

\subsection{Dynamic metastability}

\begin{figure}[h!]
    \centering
    \includegraphics[scale=0.3]{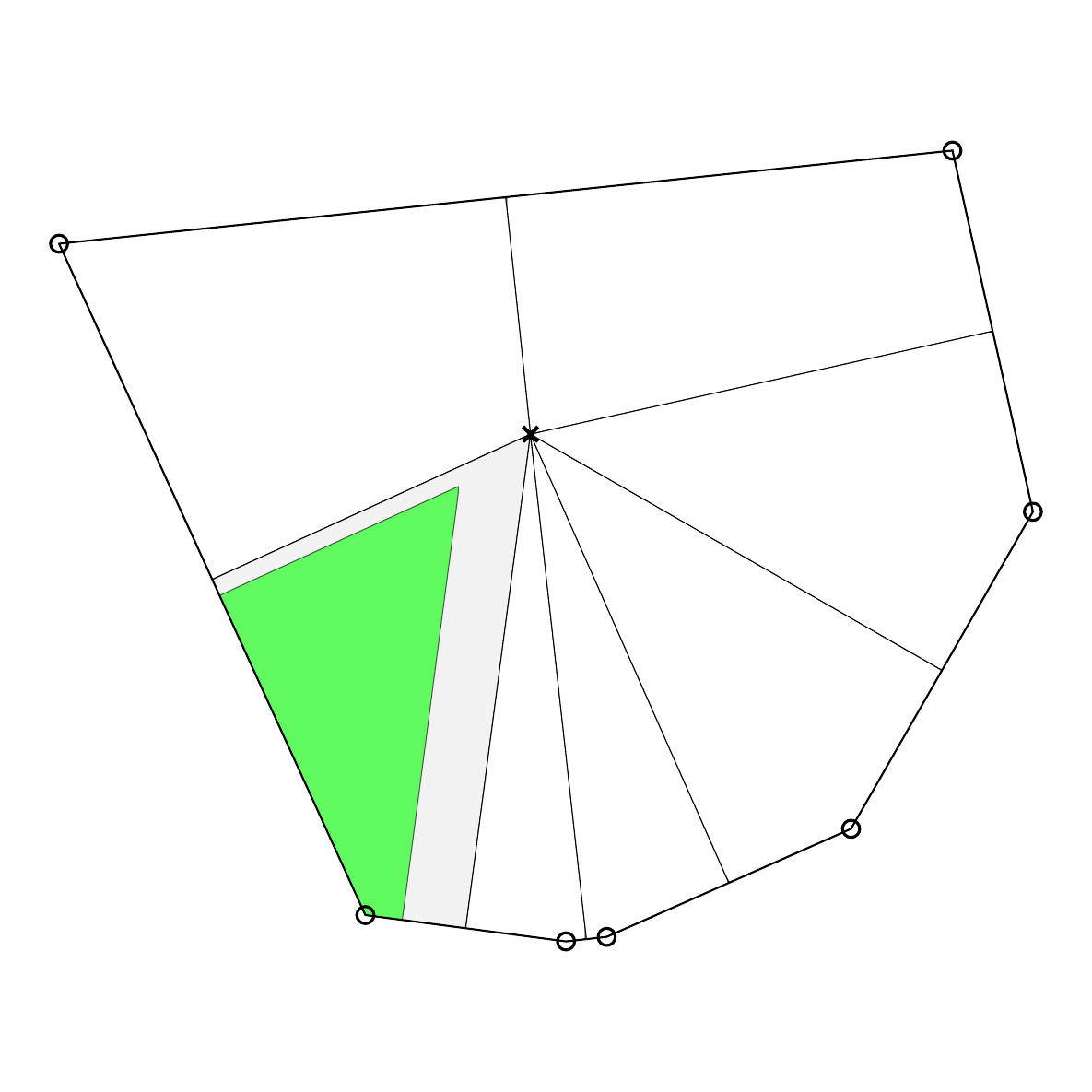}
    \includegraphics[scale=0.3]{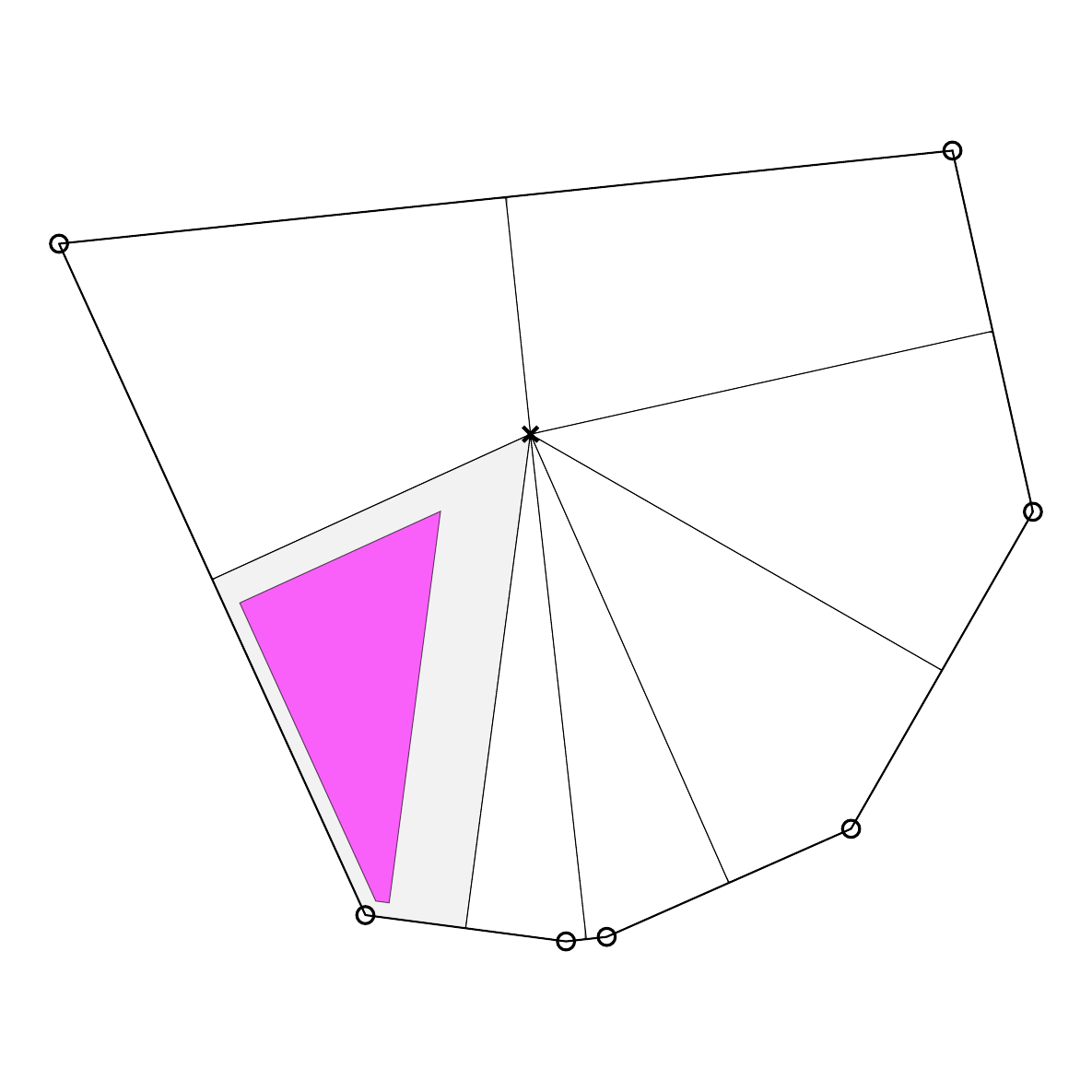}
    \caption{The cone $\mathscr{I}_i(\eta)$ (\emph{\textcolor{green}{left}}) and the eroded cone $\mathscr{I}_i(\eta)\ominus \delta B_1$ (\emph{\textcolor{magenta}{right}}), for $\eta=0.05$ and $\delta=0.02$.}
    \label{fig: geometric}
\end{figure}

Given a convex polytope \( \mathcal{K} \subset \mathbb{R}^d \) with vertices \( v = \{v_i\}_{i \in \llbracket 1, \kappa \rrbracket} \), recall the definition of the cells
\[
\mathscr{C}_i(v) \coloneqq \left\{ x \in \mathcal{K} : \langle v_i - v_j, x \rangle \geq 0 \text{ for all } j \in \llbracket 1, \kappa \rrbracket \right\}.
\]
Assuming that \( v_i \in \mathscr{C}_i(v) \setminus \bigcup_{j \neq i} \mathscr{C}_j(v) \), let \( \upsigma : \llbracket 1, n \rrbracket \to \llbracket 1, \kappa \rrbracket \) be the map assigning each \( x_j^0 \) to its corresponding cell, i.e., \( x_j^0 \in \mathscr{C}_{\upsigma(j)}(v) \).
Given \( A \subset \mathbb{R}^d \) and \( \delta > 0 \), we define the interior erosion of \( A \) by
\[
A \ominus \delta B_1 := \left\{ x \in \mathbb{R}^d : x + \delta B_1 \subset A \right\},
\]
where \( B_1 \) denotes the closed unit ball centered at the origin.
Finally, for $\upeta>0$ we define 
\[
\mathscr{I}_i(\upeta) := \left\{ x \in \mathcal{K} : \langle v_i - v_j, x \rangle \geq \upeta \text{ for all } j \in \llbracket 1, \kappa \rrbracket \setminus \{i\} \right\}.
\]
We are now ready to state the first main result of this section.

\begin{theorem}[Clustering] \label{lem: first.phase} 

Let $\gamma\in(0, 1)$, and $\mathcal{K}\subset\R^d$ a convex polytope with vertices $v_1,\ldots, v_\kappa$ which satisfy 
\begin{enumerate}
    \item[i)] For any $i\in\llbracket 1, \kappa\rrbracket$,
    \begin{equation} \label{eq: hypothesis.polytope}
    \sup_{x, y\in \mathcal{K}} \arccos\left\langle x-v_i, y-v_i \right\rangle<\frac{\pi}{2};
    \end{equation}
    \item[ii)] For all $i,j\in\llbracket 1, \kappa\rrbracket$,  
    \begin{equation} \label{eq: hypothesis.voronoi}
    \left\|v_i\right\|=\left\|v_j\right\|.
    \end{equation}
    \item[iii)] There exists $c_0>0$ such that
    \begin{equation} \label{eq: bound.max}
        \left\langle v_\iota,v_\iota\right\rangle\geq \left\langle v_\iota,v_\ell\right\rangle+c_0\quad \text{ if }\iota\neq \ell\in\llbracket 1,\kappa\rrbracket.
    \end{equation}
\end{enumerate}
Let $n\geq\kappa$.
Then there exists some $\beta_*>0$ (depending on $n$, and the geometry of $\mathcal{K}$), such that for all $\beta\geq \beta_*$, the following holds. 

Consider any initial configuration $(x_i^0)_{i\in\llbracket 1,n\rrbracket}\in\mathcal{K}^n$ such that 
\begin{itemize}
    \item $x_i^0=v_i$ for $i\in\llbracket 1,\kappa\rrbracket$;
    \item $x_i^0\in \mathcal{I}_{\upsigma(i)}(\beta^{-\frac18})\ominus\beta^{-\frac14}B_1$ for $i\in\llbracket\kappa+1,n\rrbracket$.
\end{itemize}
Then 
\begin{align*}
    \mathbb{P}\left(\bigcap_{i\in\llbracket 1, \kappa\rrbracket}\left\{x_i^{T_1}\in B(v_i,\beta^{-\frac14})\right\}\cap\bigcap_{j\in\llbracket\kappa+1, n\rrbracket} \left\{x_j^{T_1}\in B(v_{\upsigma(j)},C\uptau)\right\}\right)\geq 1-\beta^{-\frac18}
\end{align*}
where $C>1$ is a universal constant, 
\begin{equation*}
    \uptau \coloneqq 
\min_{i \in \llbracket 1, \kappa \rrbracket} \frac{c_0}{2 \max_{j \ne i} \|v_i - v_j\|}
\wedge 
\frac{\sqrt{2c_0}}{2}
\wedge 
\left( (1 - \gamma) \min_{j \in \llbracket \kappa+1, n \rrbracket} \|x_j^0 - v_{\upsigma(j)}\| \right),
\end{equation*}
and
\begin{equation*}
    T_1 =\left\lfloor \frac{1}{\log(1-\gamma)}\log\left(\frac{\uptau}{\min_{j\in \llbracket\kappa+1, n\rrbracket}\|x_j^0-v_{\upsigma(j)}\|}\right)\right\rfloor.
\end{equation*}
\end{theorem}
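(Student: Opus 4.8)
The plan is to \emph{couple} the Markov chain \eqref{eq: softmax.process} to the deterministic hardmax iteration \eqref{HSA} with $B\equiv I_d$ on a high-probability event, thereby reproducing the conclusion of \Cref{thm: exp.fast.polytope}. For each particle, single out an \emph{intended transition}: a vertex particle $i\leq\kappa$ resamples its own index $i$ and does not move, while an interior particle $j>\kappa$ resamples the index $\upsigma(j)$, performing the exact step $x_j^{t+1}=(1-\gamma)x_j^t+\gamma\,v_{\upsigma(j)}$. Let $\mathcal{G}$ be the event that every particle realises its intended transition at every time $t\in\llbracket 0,T_1-1\rrbracket$. On $\mathcal{G}$, one checks by induction on $t$ that the configuration equals the hardmax one: the $v_i$ stay fixed and $x_j^t=(1-\gamma)^t x_j^0+(1-(1-\gamma)^t)\,v_{\upsigma(j)}$, which remains in $\mathscr{I}_{\upsigma(j)}(\beta^{-1/8})$ by convexity, since $x_j^0$ lies there by hypothesis and, by \eqref{eq: bound.max}, so does $v_{\upsigma(j)}$ for $\beta$ large. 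Evaluating at $t=T_1$ recovers $x_i^{T_1}=v_i\in B(v_i,\beta^{-1/4})$ and $\|x_j^{T_1}-v_{\upsigma(j)}\|=(1-\gamma)^{T_1}\|x_j^0-v_{\upsigma(j)}\|$; the value of $T_1$ is tuned so that the latter is at most $C\uptau$, so the stated event holds on $\mathcal{G}$.

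It remains to bound $\mathbb{P}(\mathcal{G}^c)$. The mechanism is that in the ideal configuration the attention scores of every particle separate by a definite margin, so the softmax mass concentrates on the intended index. The key geometric input is the linear lower bound obtained from \eqref{eq: bound.max} by expanding $y\in\mathcal{K}$ in the vertices:
\[
\langle v_i,v_i\rangle-\langle v_i,y\rangle\;\geq\;\frac{c_0}{\mathsf{d}(\mathcal{K})}\,\|y-v_i\|\qquad\text{for all }y\in\mathcal{K}.
\]
For a vertex particle $i$ sitting at $v_i$, every other particle is at distance at least $(1-\gamma)^{T_1}\beta^{-1/4}$ from $v_i$: the interior particles of cell $i$ because they start at distance $\geq\beta^{-1/4}$ from $v_i$ (which is exactly why the initial data is placed in the $\beta^{-1/4}$-erosion, since the vertex $v_i$ lies on $\partial\mathscr{I}_i(\beta^{-1/8})$) and then only contract towards it, and the remaining particles because, on $\mathcal{G}$, they stay in cells whose distance to $v_i$ is a fixed positive constant by \eqref{eq: bound.max}. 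The lower bound then places the non-self scores of particle $i$ below its self score by $\Theta(\beta^{-1/4})$, so a non-intended pick has probability $\leq n\,e^{-\Theta(\beta^{3/4})}$. For an interior particle $j$ with $x_j^t\in\mathscr{I}_{\upsigma(j)}(\beta^{-1/8})$, cone membership gives $\langle v_{\upsigma(j)}-v_\ell,x_j^t\rangle\geq\beta^{-1/8}$ for $\ell\neq\upsigma(j)$, and expanding any competitor in the vertices---with \eqref{eq: bound.max} keeping the vertices of other cells a fixed distance from $v_{\upsigma(j)}$---yields a $\Theta(\beta^{-1/8})$ gap between the leader's score and every other, while the $O(\beta^{-1/4})$ wobble of the vertex particles perturbs the scores by only $o(\beta^{-1/8})$; hence a non-intended pick has probability $\leq n\,e^{-\Theta(\beta^{5/8})}$. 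Conditions \eqref{eq: hypothesis.polytope} and \eqref{eq: hypothesis.voronoi} guarantee that this cell/cone structure is genuinely preserved along the dynamics: the equal-norm condition identifies the $\mathscr{C}_i(v)$ with Voronoi cells via \Cref{prop: voronoi}, and the angle condition keeps the perturbed positions inside the relevant cones. A union bound over the at most $T_1 n$ pairs (time, particle) then gives $\mathbb{P}(\mathcal{G}^c)\leq T_1 n^2\,e^{-\Theta(\beta^{5/8})}$, which is $\leq\beta^{-1/8}$ once $\beta\geq\beta_*(n,\mathcal{K})$.

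The main obstacle is making the induction self-consistent: the one-step concentration estimates are valid only when the configuration is still close to the ideal one, while that closeness is precisely what $\mathcal{G}$ supplies, so the argument must be organised as a genuine induction on $t$ that propagates both the position bounds and the failure probabilities, with every threshold (the $\beta^{-1/8}$ cell-margin, the $\beta^{-1/4}$ wobble, the contraction factor $(1-\gamma)^{T_1}$) chosen so that the error fed into step $t+1$ is dominated by the margin available there. A secondary subtlety is that an interior particle's intended pick need not literally be the index $\upsigma(j)$---another interior particle of the same cell may carry comparable softmax weight---which one handles by relaxing the intended transition to resampling any index positioned within $O(\beta^{-1/4})$ of $v_{\upsigma(j)}$ (this still contracts $x_j^t$ towards $v_{\upsigma(j)}$ up to an $O(\beta^{-1/4})$ error absorbed into $C\uptau$) and noting that same-cell particles not yet near $v_{\upsigma(j)}$ carry weight $e^{-\Theta(\beta^{7/8})}$, since $x_j^t$ values them strictly below $v_{\upsigma(j)}$.
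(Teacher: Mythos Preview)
Your coupling approach is essentially correct and genuinely different from the paper's proof. The paper does \emph{not} couple to the hardmax dynamics on a high-probability event; instead it proceeds in four steps: (1) a deterministic ``non-entry'' bound $d_{ij}^t\geq(1-\gamma)^t d_{ij}^0$ valid with probability $1$ (this is where hypothesis~(i) enters, via a geometric claim showing the closest possible next position to $v_i$ is obtained by jumping to $v_i$ itself); (2) a counting argument bounding the number of times a vertex particle moves, via stochastic dominance by a binomial; (3) an analogous bound for interior particles staying in their eroded cone; and (4) expectation/variance estimates on $d_{ij}^t$ followed by Chebyshev to land the interior particles in $B(v_{\upsigma(j)},C\uptau)$. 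Step~(4) is the reason the paper only obtains a polynomial tail---the authors remark explicitly on this limitation.

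Your argument is more elementary and delivers exponential concentration $\mathbb{P}(\mathcal{G}^c)\leq T_1 n^2 e^{-\Theta(\beta^{5/8})}$, comfortably beating the stated $\beta^{-1/8}$. More strikingly, your argument does not seem to need hypothesis~(i) at all: on $\mathcal{G}$ the interior particles in cell $i$ stay at distance $(1-\gamma)^t\|x_j^0-v_i\|\geq(1-\gamma)^{T_1}\beta^{-1/4}$ from $v_i$ purely by the explicit form of the hardmax step, without invoking the angle condition. Since the paper itself remarks that~(i) ``is a technical assumption that we do not know how to remove'', this is worth pursuing. Likewise, you use the cones $\mathscr{I}_i(\eta)$ directly and do not actually need hypothesis~(ii); your sentence attributing the preservation of the cone structure to~(i) and~(ii) is misplaced---that preservation follows from convexity of $\mathscr{I}_i(\eta)$ together with $v_i\in\mathscr{I}_i(c_0)$, which you already note.

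Two small corrections. First, your ``secondary subtlety'' is not a genuine obstacle: on the strict event $\mathcal{G}$, a same-cell interior competitor $x_k^t$ satisfies
\[
\langle x_j^t,v_{\upsigma(j)}-x_k^t\rangle=(1-\gamma)^t\langle x_j^t,v_{\upsigma(j)}-x_k^0\rangle\geq(1-\gamma)^t\,\beta^{-1/8}\cdot\frac{\|x_k^0-v_{\upsigma(j)}\|}{\mathsf{d}(\mathcal{K})}\geq\Theta(\beta^{-3/8}),
\]
by expanding $x_k^0$ in the vertices and using the $\beta^{-1/4}$-erosion; so the intended pick $\upsigma(j)$ already dominates uniquely and no relaxation is needed. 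Second, the exponent in your claim that same-cell particles carry weight $e^{-\Theta(\beta^{7/8})}$ should be $\beta^{5/8}$; this does not affect the conclusion.
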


The proof can be found in {\bf \Cref{sec: proof.first.phase}}.

\begin{remark}[On \Cref{lem: first.phase}] 
Before proceeding with the second phase of the dynamics, we provide some comments regarding the setup of \Cref{lem: first.phase}.
\begin{itemize}[label=\LARGE\textbullet]
    \item The choice of $\beta^{-\frac14}$ as a radius stems from a bound of the self-interaction probability in \Cref{lem: bound.pii}. All remaining powers of $\beta$ (e.g., $\beta^{-\frac18}$) can be raised up to $\beta^{-\frac14-\epsilon}$ for any $\epsilon>0$---we choose powers of $2$ to ease presentation.
    \item \eqref{eq: hypothesis.polytope} is a technical assumption that we do not know how to remove. It is only used in \Cref{lem: cone.condition}. \eqref{eq: bound.max} is equivalent to \eqref{eq: vertices.own.cell}.
    \item The probability in the concluding estimate is only polynomial and not exponential in $\beta^{-1}$ due to (possibly coarse) variance bounds---see \eqref{eq: variance.conditioned}.  
    \item One can adapt the arguments from the proof of \Cref{lem: first.phase} to show convergence of \eqref{eq: softmax.process} toward~\eqref{eq: hardmax.dynamics.V_ae} and/or~\eqref{eq: hardmax.ode}.
    To obtain~\eqref{eq: hardmax.dynamics.V_ae}, let both $\beta \to +\infty$ and $\uptau \to 0$. However, note that as $\uptau$ approaches zero, the time $T_1$, which depends on $\uptau$, tends to infinity.
    To recover the ODE, an additional time rescaling is needed as $\gamma \to 0$ (specifically, we introduce a rescaled time variable $s = \gamma t$).
\end{itemize}    
\end{remark}


After \Cref{lem: first.phase}, we enter the second phase which is summarized in the following theorem.

\begin{theorem}[Metastability] \label{lem: metastab.1}
    Consider the setup of $\mathcal{K}$ as in \Cref{lem: first.phase}. There exists some $\varepsilon_*>0$ such that the following holds.
    
    Consider any initial configuration $(x_i^0)_{i\in\llbracket 1, n\rrbracket}\in\mathcal{K}^n$ such that 
    $$x_i^0\in B(v_{\upsigma(i)}, C\uptau)$$ 
    for all $i\in\llbracket1,n\rrbracket$.
    For $i\in\llbracket1,\kappa\rrbracket$, let $\mu_i$ denote the number of points in the ball around $v_i$:
    \begin{equation*}
        \mu_{i}\coloneqq\#\{j\in \llbracket 1, n\rrbracket: x_j^0\in B(v_{i},C\uptau)\}.
    \end{equation*}
    For $i\in\llbracket1,\kappa\rrbracket$, relabel all the points $x_\ell^0$ as $x_{ji}^0$ if $x_\ell^0\in B(v_i, C\uptau)$. 
    Then for any $\varepsilon\in(0,\varepsilon_*)$ and $\gamma\in(0, 1)$ such that $\varepsilon/\gamma \geq 2\mathsf{d}(\mathcal{K})$, the random variable
    \begin{align*}
        T_2\coloneqq\inf\Big\{t\geq 0&\colon x_{ji}^t\notin \conv\{x_{ji}^{0}\}_{j\in\llbracket 1,\mu_i\rrbracket}+B(0,\varepsilon)\\
        &\quad\text{ for some } (i,j)\in\llbracket 1, \kappa\rrbracket\times\llbracket 1,\mu_i\rrbracket\Big\}.
    \end{align*}
    is such that for all $t>1$, 
    \begin{equation} \label{eq: metastability.bound}
       \mathbb{P}(T_2\geq t)\geq 1-\exp\left(\left(1+\frac{\varepsilon}{\gamma}\right)\log\left(\frac{\gamma}{\varepsilon}t\right)+\left(1+\frac{\varepsilon}{\gamma}\right)\log n -\beta\frac{c_0}{2}\frac{\varepsilon}{\gamma}\right).
    \end{equation}
\end{theorem}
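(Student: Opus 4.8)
The plan is to track, for each fixed cluster index $i$, the diameter of the cluster $\{x_{ji}^t\}_{j\in\llbracket 1,\mu_i\rrbracket}$ together with the excursion of each $x_{ji}^t$ away from the initial convex hull $\conv\{x_{ji}^0\}_j$. The key structural fact, inherited from the hardmax picture of \Cref{sec: B.psd} and the geometry in \Cref{lem: cells}, is that as long as all particles in cluster $i$ remain in a $C\uptau$-neighborhood of $v_i$ and $\uptau$ is small enough (using \eqref{eq: bound.max} via the constant $c_0$ and the angle condition \eqref{eq: hypothesis.polytope}), the attention score $\langle x_{ji}^t, x_{ki}^t\rangle$ with any particle $x_{ki}^t$ in the \emph{same} cluster dominates the score with any particle in a \emph{different} cluster by at least $c_0/2 - O(\uptau)$; this is exactly the type of estimate established in the first-phase lemmas (cf.\ \Cref{lem: cone.condition} and the bound leading to $\uptau$). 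Consequently the intra-cluster attention mass is $1 - O(n e^{-\beta c_0/2})$ with high probability at each step.

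The main steps, in order, are as follows. \emph{Step 1 (one-step invariance under intra-cluster moves).} If at time $t$ the transition for $x_{ji}^t$ picks a target $x_{ki}^t$ within the same cluster, then $x_{ji}^{t+1}=(1-\gamma)x_{ji}^t+\gamma x_{ki}^t$ is a convex combination of points already in $\conv\{x_{ji}^t\}_j$, and since that hull is contained in $\conv\{x_{ji}^{0}\}_j + B(0,\varepsilon)$ by the induction hypothesis, the particle stays inside; moreover the cluster hull is non-increasing on such steps. So the bad event $\{T_2 = t+1\}$ can only be triggered by a transition that selects a target \emph{outside} cluster $i$. \emph{Step 2 (size of a bad jump).} When such an inter-cluster jump occurs, the displacement is at most $\gamma\,\mathsf{d}(\mathcal{K})$, so after one bad jump the particle has moved by at most $\gamma\,\mathsf{d}(\mathcal{K}) \le \varepsilon/2 < \varepsilon$ under the hypothesis $\varepsilon/\gamma\ge 2\mathsf{d}(\mathcal{K})$; hence a single inter-cluster jump does not immediately leave the $\varepsilon$-tube, but is the event we must control. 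Actually, to be careful: I would instead define the stopping time by the \emph{first} inter-cluster jump of any particle and show $T_2$ dominates it (possibly minus nothing, since one jump already risks accumulation) — more precisely, show that on the event that no particle has made an inter-cluster jump up to time $t$, all particles have remained inside their $\varepsilon$-tubes, so $T_2 \ge \widetilde T_2 := \inf\{t : \text{some }x_{ji}^t\text{ selects a non-cluster-}i\text{ target}\}$. \emph{Step 3 (union bound in time and over particles).} Conditioning on the high-probability event of Step 1 holding at every step up to $t$, the probability that a given particle makes an inter-cluster jump at a given step is at most $n e^{-\beta c_0/2}$ (sum of at most $n$ off-cluster attention weights, each bounded using \eqref{eq: bound.max}). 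A union bound over $t$ time steps and $n$ particles gives $\mathbb{P}(\widetilde T_2 < t) \le n\cdot t\cdot n e^{-\beta c_0/2}$, i.e.\ $\mathbb{P}(T_2 \ge t)\ge 1 - n^2 t\, e^{-\beta c_0/2}$.

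\emph{Step 4 (matching the stated bound).} The displayed bound \eqref{eq: metastability.bound} has the sharper form with exponents $(1+\varepsilon/\gamma)$ and a factor $\varepsilon/\gamma$ multiplying $\beta c_0/2$; this indicates that the crude "$\le\varepsilon$ after one jump" argument of Step 2 is too lossy, and one must instead observe that a particle needs on the order of $\varepsilon/(\gamma\,\mathsf{d}(\mathcal{K}))\gtrsim \varepsilon/\gamma$ \emph{consecutive} or cumulative inter-cluster jumps before its excursion exceeds $\varepsilon$ (each jump contributing at most $\gamma\,\mathsf{d}(\mathcal{K})$ outward, while intra-cluster steps contract back toward the hull geometrically by a factor $(1-\gamma)$). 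Thus the relevant event is "at least $\sim\varepsilon/\gamma$ bad jumps among $t$ steps," whose probability, by a binomial tail / union bound over the $\binom{t}{\lceil\varepsilon/\gamma\rceil}$ choices of jump times, is at most $\big(C n t\, e^{-\beta c_0/2}\big)^{\varepsilon/\gamma}$ up to the $n$-dependent and $(1+\varepsilon/\gamma)$ combinatorial factors, which on taking logarithms yields exactly the right-hand exponent in \eqref{eq: metastability.bound}. I expect this counting-of-bad-jumps step — making precise that $\lceil \varepsilon/\gamma\rceil$ inter-cluster jumps are genuinely \emph{needed}, accounting for the geometric pull-back of the good steps, and then assembling the binomial tail into the stated closed form — to be the main technical obstacle; the attention-domination estimate itself is a direct reprise of the first-phase computations and the per-step invariance is elementary convexity.
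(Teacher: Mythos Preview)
Your plan is essentially the paper's approach: both hinge on (i) the bound $p_{(i\ell)\to\ell}^t \ge 1 - O(n)e^{-\beta c_0/2}$ valid while every particle sits in its $\varepsilon$-tube (the paper isolates this as a separate claim), (ii) the convexity observation that intra-cluster moves leave the tube invariant, and (iii) a Chernoff-type tail for ``$\gtrsim \varepsilon/\gamma$ bad jumps in $t$ steps''. Two small corrections. First, your aside in Step~4 that good steps ``contract back toward the hull geometrically by a factor $(1-\gamma)$'' is not right: an intra-cluster move toward a cluster-mate that has itself drifted does \emph{not} shrink the distance to $\conv\{x_{ji}^0\}_j$; the correct statement is exactly the invariance you already wrote in Step~1, and that (not contraction) is what makes the cumulative count of bad jumps the right quantity. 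Second, the conditioning obstacle you flag---that the bound on the inter-cluster probability holds only while all particles are in their tubes---is handled in the paper by slicing $[0,t]$ into $L\sim t\gamma/\varepsilon$ chunks of length $\lfloor\varepsilon/(2\gamma\,\mathsf d(\mathcal K))\rfloor$, bounding at each chunk the event $\{\max_{i,\ell} M_{i\ell}\le \lfloor\varepsilon/(2\gamma\,\mathsf d(\mathcal K))\rfloor\}$ via stochastic domination by a binomial plus Chernoff, and taking the product over chunks; this is what produces the extra $\log(\gamma t/\varepsilon)$ and the $(1+\varepsilon/\gamma)$ prefactors in the stated exponent, rather than a single global binomial tail.
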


The proof can be found in {\bf \Cref{sec: proof.lem.metastab.1}}. 

To ensure $\mathbb{P}(T_2 \geq t)$ is close to $1$, we must have
\[
\beta \gg \left(1 + \frac{\varepsilon}{\gamma} \right) \left( \log\left( \frac{\gamma}{\varepsilon} t \right) + \log n \right).
\]
This implies that the metastable time horizon scales exponentially in $\beta$, namely, $t \lesssim \varepsilon\cdot \gamma^{-1}\cdot e^{c \beta}$ for some $c>0$. In terms of steps, the total number of iterations before leaving the metastable state satisfies $t\cdot\gamma^{-1} \lesssim \gamma^{-2}\cdot e^{c \beta}.$
Thus, for fixed spatial accuracy $\varepsilon$, the metastability lasts for an exponential number of time steps in $\beta$, provided the time-step $\gamma$ is sufficiently small. This reflects a trade-off: smaller $\gamma$ improves stability but slows down the effective time evolution. Since $\uptau$ is fixed by the geometry of the polytope, \eqref{eq: metastability.bound} highlights how the metastability window depends on the interaction between temporal discretization and inverse temperature.

\section{Concluding remarks}

We analyze the hardmax self-attention dynamics \eqref{eq: hardmax.dynamics.V}—the $\beta\to+\infty$ limit of softmax attention—in the regime where the key--query matrix $B^t$ is symmetric and of fixed sign. We further relate this singular-limit model to the finite-$\beta$ dynamics and establish dynamical metastability for the latter. Finally, parts of our discrete-time analysis are instrumental in proving the well-posedness of a singular ODE that arises naturally in the asymptotic study of the continuous-time self-attention model.

Extending our results to non-symmetric key-query matrices $B^t$, and removing the vertex assumptions present throughout \Cref{sec: B.psd}, remain open directions for future work.

\appendix

\section{Proofs}

\subsection{Proof of \Cref{thm: fw.cluster}} \label{sec: bach.proof}

\begin{proof}[Proof of \Cref{thm: fw.cluster}]
Fix \( i \in \llbracket 1, n\rrbracket \), let
\[
s^t_i \coloneqq \argmin_{y \in \mathcal{K}^t} \langle B^t_* x^t_i, y \rangle,
\]
so that the update becomes
\[
x^{t+1}_i = x^t_i + \gamma^t (s^t_i - x^t_i).
\]
Expanding \( \mathsf{J}^t(x^{t+1}_i) \) exactly:
\begin{align*}
\mathsf{J}^t(x^{t+1}_i)
&= \frac{1}{2} \left\langle B^t_* \left( x^t_i + \gamma^t (s^t_i - x^t_i) \right), x^t_i + \gamma^t (s^t_i - x^t_i) \right\rangle \\
&= \mathsf{J}^t(x^t_i) + \gamma^t \left\langle B^t_* x^t_i, s^t_i - x^t_i \right\rangle + \frac{(\gamma^t)^2}{2} \left\langle B^t_* (s^t_i - x^t_i), s^t_i - x^t_i \right\rangle.
\end{align*}
By convexity of \(\mathsf{J}^t\) and since \(0 \in \mathcal{K}^t\), we have
\[
\mathsf{J}^t(0) = 0 \leq \mathsf{J}^t(y), \quad \forall y \in \mathcal{K}^t,
\]
and in particular,
\[
\left\langle B^t_* x^t_i, s^t_i - x^t_i \right\rangle \leq -\mathsf{J}^t(x^t_i).
\]
Moreover, since \(s^t_i, x^t_i \in \mathcal{K}^t \subseteq \mathcal{K}^0\) and using the hypothesis on \(B^t\), we have
\[
\left\langle B^t_* (s^t_i - x^t_i), s^t_i - x^t_i \right\rangle \leq \lambda_{\max}(B^0_*) \mathsf{d}(\mathcal{K}^0)^2.
\]
Therefore,
\[
\mathsf{J}^t(x^{t+1}_i) \leq \mathsf{J}^t(x^t_i) - \gamma^t \mathsf{J}^t(x^t_i) + \frac{(\gamma^t)^2}{2} \lambda_{\max}(B^0_*) \mathsf{d}(\mathcal{K}^0)^2,
\]
which simplifies to
\[
\mathsf{J}^t(x^{t+1}_i) \leq (1 - \gamma^t) \mathsf{J}^t(x^t_i) + \frac{(\gamma^t)^2}{2} \lambda_{\max}(B^0_*) \mathsf{d}(\mathcal{K}^0)^2.
\]
Now, using the assumption \(B^{t+1}_* \preccurlyeq B^t_*\) and \(\mathcal{K}^{t+1} \subseteq \mathcal{K}^t\), we have
\[
\mathsf{J}^{t+1}(x^{t+1}_i) \leq \mathsf{J}^t(x^{t+1}_i),
\]
so:
\[
\mathsf{J}^{t+1}(x^{t+1}_i) \leq (1 - \gamma^t) \mathsf{J}^t(x^t_i) + \frac{(\gamma^t)^2}{2} \lambda_{\max}(B^0_*) \mathsf{d}(\mathcal{K}^0)^2.
\]
Let \(C \coloneqq \frac{1}{2} \lambda_{\max}(B^0_*) \mathsf{d}(\mathcal{K}^0)^2\), and recall \(\gamma^t = \frac{2}{t+2}\). Then
\[
\mathsf{J}^{t+1}(x^{t+1}_i) \leq \left(1 - \frac{2}{t+2} \right) \mathsf{J}^t(x^t_i) + \frac{4}{(t+2)^2} C.
\]
Define \(a^t \coloneqq (t+1) \mathsf{J}^t(x^t_i)\). Then
\begin{align*}
a^{t+1} = (t+2) \mathsf{J}^{t+1}(x^{t+1}_i) &\leq (t+2)\left(1 - \frac{2}{t+2} \right) \frac{a^t}{t+1} + 4C \\
&= \frac{t}{t+1} a^t + 4C.
\end{align*}
Inductively, with \(a^0 = 0\), we obtain
\[
a^t \leq 4C t \quad \Rightarrow \quad \mathsf{J}^t(x^t_i) \leq \frac{4C}{t+1},
\]
as claimed.
\end{proof}

\subsection{Proof of \Cref{thm: ode}} \label{sec: proof.thm.ode}

We split the proof in three parts.

\subsubsection*{Part 1. Existence}
Fix $i\in\llbracket 1, n\rrbracket$ and consider 
\[
\begin{cases}
x_\gamma^{t+1} = (1-\gamma)x_\gamma^t + \gamma \argmax_{y \in \mathcal{K}} \langle Bx_\gamma^t, y \rangle, \\
x^0_\gamma = x^0_i \in \mathrm{int}(\mathscr{C}_i(v)),
\end{cases}
\]
for $\gamma>0$. Arguing as in the proof of \Cref{thm: exp.fast.polytope}, we gather that 
\[
x_\gamma^t \in \mathscr{C}_i(v) \quad \text{for all } t \geq 0.
\]
Moreover, the evolution
\[
x_\gamma^{t+1} - x_\gamma^t = \gamma (v_i - x_\gamma^t)
\]
shows that the sequence $(x_\gamma^t)_{\gamma\geq0}$ moves along the straight line segment from $x^0_i$ toward $v_i$.

We sketch the argument to pass to the continuous limit as $\gamma \to 0$, which is mostly classical. We proceed as is always done for proving the convergence of the Euler method. Namely, we define two types of interpolants: the piecewise constant interpolant
\[
\tilde{x}_\gamma(t) \coloneqq x_\gamma^j \quad \text{for } t \in [j\gamma, (j+1)\gamma),
\]
and the piecewise affine interpolant
\[
\hat{x}_\gamma(t) \coloneqq x_\gamma^j + \frac{t - j\gamma}{\gamma}(x_\gamma^{j+1} - x_\gamma^j) \quad \text{for } t \in [j\gamma, (j+1)\gamma).
\]
Since the dynamics take place inside the compact set $\mathcal{K}$, there exists $M>0$ such that
\[
\|\tilde{x}_\gamma(t)\|, \, \|\hat{x}_\gamma(t)\| \leq M \quad \text{for all } t \geq 0.
\]
Moreover, from the discrete evolution,
\[
\left\| \frac{x_\gamma^{t+1} - x_\gamma^t}{\gamma} \right\| = \|v_i - x_\gamma^t\| \leq 2M,
\]
so that both $\tilde{x}_\gamma$ and $\hat{x}_\gamma$ are uniformly Lipschitz continuous with Lipschitz constant $2M$ independent of $\gamma$. By the Arzelà–Ascoli theorem, the families $(\tilde{x}_\gamma)_{\gamma\geq0}$ and $(\hat{x}_\gamma)_{\gamma\geq0}$ are relatively compact in $C^0_{\text{loc}}(\R_{\geq0}; \mathbb{R}^d)$. Thus, up to extraction, both interpolants converge uniformly on compact intervals to a continuous curve $x(t)$. 
Moreover, for the affine interpolant $\hat{x}_\gamma$, we have
\[
\dot{\hat{x}}_\gamma(t) = \frac{x_\gamma^{j+1} - x_\gamma^j}{\gamma} = v_i - x_\gamma^j,
\]
which is uniformly bounded and converges uniformly to $v_i - x(t)$. Therefore, $\hat{x}_\gamma$ converges strongly in $W^{1,\infty}_{\text{loc}}$ to $x$.
Passing to the limit, the limiting curve $x$ satisfies the differential equation
\[
\dot{x}(t) = v_i - x(t),
\]
with initial condition $x(0) = x^0_i$. Thus, the motion follows the straight line joining $x^0_i$ to $v_i$, exponentially approaching $v_i$.

\subsubsection*{Part 2. Uniqueness}

Note that the vector field \( \mathsf{v}:\mathcal{K}^n\mapsto (\mathbb{R}^d)^n\), $\mathsf{v}=(\mathsf{v}^1,...,\mathsf{v}^n)$, in \eqref{eq: hardmax.ode}, defined as
\begin{equation*}
    \mathsf{v}^i(x)\coloneqq\argmax_{y\in \{x_j\}_{j\in \llbracket 1,n\rrbracket}}\left\langle Bx_i,y\right\rangle-x_i,
\end{equation*}
satisfies
\[
\left\| \mathsf{v}\right\|_{L^\infty(\mathcal{K}^n;(\mathbb{R}^{d})^n)} \leq 2\mathsf{d}(\mathcal{K}).
\]
Define
\[
\delta := \min_{j\in\llbracket1,n\rrbracket} \dist\left(x_j^0, \partial \mathscr{C}_j(v) \cap  \bigcup_{k\in \mathrm{neigh}(j)} \mathscr{C}_k(v) \right).
\]
(Recall the definition of the neighbor vertices in \eqref{eq: neigh}.)
Then, any solution to \eqref{eq: hardmax.ode} satisfies
\[
x_j(t) \in \mathscr{C}_{\upsigma(j)}(v) \hspace{1cm} \text{ for } (t,j) \in \left[0, \frac{\delta}{2\mathsf{d}(\mathcal{K})} \right]\times \llbracket 1, n \rrbracket.
\]
This means that, on the time interval \([0, \frac{\delta}{2\mathsf{d}(\mathcal{K})}] \), equation \eqref{eq: hardmax.ode} reduces to
\begin{equation}\label{eq: hardmax.reduction}
    \dot{x}_j(t) = v_{\upsigma(j)} - x_j(t).
\end{equation}
By standard Cauchy–Lipschitz theory, the solution to \eqref{eq: hardmax.reduction} is unique and given by
\begin{equation}\label{eq: solution.short}
    x_j(t) = (1 - e^{-t})v_{\upsigma(j)} + e^{-t}x_{j}(0), \hspace{1cm} \text{ for } t \in \left[0, \frac{\delta}{2\mathsf{d}(\mathcal{K})} \right].
\end{equation}
Now, define
\[
\delta_j := \min_{t \in \mathbb{R}_{\geq0}} \dist\left(x_j(t), \partial \mathscr{C}_{\upsigma(j)}(v) \cap  \bigcup_{k \in \mathrm{neigh}(\upsigma(j))} \mathscr{C}_k(v) \right) > 0,
\]
where \( x_j(t) \) is the extension of the curve in \eqref{eq: solution.short} to the positive real line. This minimum is positive since \eqref{eq: solution.short} is a convex combination of \( x_j(0) \) and \( v_{\upsigma(j)} \).
We can now set \( \delta^* = \min_{j} \delta_j \) and repeat the argument on the time interval
\[
\left[ \frac{\delta}{2\mathsf{d}(\mathcal{K})}, \frac{\delta}{2\mathsf{d}(\mathcal{K})} + \frac{\delta^*}{2\mathsf{d}(\mathcal{K})} \right],
\]
with the solution again given by \eqref{eq: solution.short} on this interval. By the definition of \( \delta^* \), we can iterate this argument \emph{ad infinitum}, thus obtaining uniqueness for any time interval.

\subsubsection*{Part 3. Continuity with respect to data}
    
Consider $\tilde{x}_0^i=x_i^0+\Delta_i$ with $\|\Delta_i\|\leq \varepsilon$.
    
\begin{claim}\label{cl: stab.cell}
    There exists $\varepsilon_*>0$ such that for all $\varepsilon\in (0,\varepsilon_*)$ and $i\in \llbracket1,n\rrbracket$,
    \begin{equation*}
        \langle B \tilde{x}_i^0,\tilde{v}_{\upsigma(i)}\rangle>\langle B \tilde{x}_i^0, y\rangle\quad \text{ for all } y\in \left\{\tilde{x}_i^0\right\}_{i\in\llbracket 1,n\rrbracket}\setminus\left\{\tilde{v}_{\upsigma(i)}\right\}.
    \end{equation*}
\end{claim}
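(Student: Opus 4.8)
The plan is to read \Cref{cl: stab.cell} as an openness statement: under the two hypotheses of \Cref{thm: ode} the configuration $(x_i^0)_i$ satisfies a \emph{strict} form of cell membership, and a small perturbation of the particle cloud cannot break strict inequalities. Concretely, I would first show that for each $i$ the vertex $v_{\upsigma(i)}$ is the \emph{unique} maximizer of $y\mapsto\langle Bx_i^0,y\rangle$ over $\mathcal{K}$, extract a uniform quantitative margin $c_*>0$ from the finitely many strict inequalities this produces, and then absorb the perturbation into $c_*$.

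For the margin, fix $i$. If $x_i^0$ is a vertex, then $x_i^0=v_{\upsigma(i)}$ and hypothesis \eqref{eq: vertices.own.cell} says precisely that $v_{\upsigma(i)}\notin\mathscr{C}_k(v)$ for $k\neq\upsigma(i)$, which together with $v_{\upsigma(i)}\in\mathscr{C}_{\upsigma(i)}(v)$ forces $\langle Bv_{\upsigma(i)},v_{\upsigma(i)}\rangle>\langle Bv_{\upsigma(i)},v_k\rangle$ for all $k\neq\upsigma(i)$. If $x_i^0$ is not a vertex, the second hypothesis of \Cref{thm: ode} places $x_i^0$ off every intersection $\mathscr{C}_{i'}(v)\cap\mathscr{C}_{j'}(v)$ with $i'\neq j'$---which by \Cref{lem: cells} lie in hyperplanes---so that again $\langle Bx_i^0,v_{\upsigma(i)}\rangle>\langle Bx_i^0,v_k\rangle$ strictly for $k\neq\upsigma(i)$. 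Writing an arbitrary $y\in\mathcal{K}\setminus\{v_{\upsigma(i)}\}$ as a convex combination $\sum_k\lambda_kv_k$ with $\lambda_{\upsigma(i)}<1$ then upgrades this to $\langle Bx_i^0,v_{\upsigma(i)}\rangle>\langle Bx_i^0,y\rangle$, in particular for every particle $x_j^0\neq v_{\upsigma(i)}$. Minimizing the gap over the finitely many pairs $(i,j)$ produces $c_*>0$ with $\langle Bx_i^0,v_{\upsigma(i)}\rangle-\langle Bx_i^0,x_j^0\rangle\geq c_*$ whenever $x_j^0\neq v_{\upsigma(i)}$.

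For the perturbation, write $\tilde v_{\upsigma(i)}$ for the image under the perturbation of the particle equal to $v_{\upsigma(i)}$ (so $\tilde v_{\upsigma(i)}\in\{\tilde x_k^0\}_k$), and expand $\langle B\tilde x_i^0,\tilde v_{\upsigma(i)}-\tilde x_j^0\rangle$ as $\langle Bx_i^0,v_{\upsigma(i)}-x_j^0\rangle$ plus three error terms, each linear or quadratic in the displacements (all of norm $\leq\varepsilon$). Bounding them via $|\langle Bu,w\rangle|\leq\lambda_{\max}(B)\|u\|\|w\|$ with $\max_\ell\|x_\ell^0\|$ and $\mathsf{d}(\mathcal{K})$ shows the total error is at most $C\varepsilon$ for $\varepsilon\leq1$, where $C$ depends only on $B$ and the initial cloud. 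Taking $\varepsilon_*\coloneqq\min\{1,\,c_*/(2C)\}$ then gives $\langle B\tilde x_i^0,\tilde v_{\upsigma(i)}\rangle-\langle B\tilde x_i^0,\tilde x_j^0\rangle\geq c_*/2>0$ for all $\varepsilon\in(0,\varepsilon_*)$ and all $j$ with $x_j^0\neq v_{\upsigma(i)}$, which is the claim.

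I expect the only substantive point to be the first one: checking that the two hypotheses of \Cref{thm: ode} are exactly what makes the dominance of $v_{\upsigma(i)}$ \emph{strict} over the other vertices, and then propagating that strictness to all of $\mathcal{K}$ by convexity; after that, the choice of $\varepsilon_*$ is routine $\varepsilon$-bookkeeping. One small caveat to state explicitly is the meaning of the tilde: $\tilde v_j$ must denote the perturbation of the specific particle equal to $v_j$, so that the perturbed polytope has vertex set $\{\tilde v_j\}_j$ and the claim is literally a statement about the perturbed particle cloud $\{\tilde x_k^0\}_k$.
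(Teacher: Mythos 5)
Your proposal is correct and essentially mirrors the paper's argument: bound the perturbation error by $O(\varepsilon)$ (Cauchy--Schwarz plus boundedness of the initial cloud), observe that the unperturbed gap $\langle Bx_i^0,v_{\upsigma(i)}\rangle-\langle Bx_i^0,x_j^0\rangle$ is strictly positive, and choose $\varepsilon_*$ so the gap dominates. The only difference is that the paper compresses the positivity step into the phrase ``by uniqueness of the $\argmax$ of the unperturbed problem,'' whereas you unpack it carefully via the two hypotheses of \Cref{thm: ode} (separating the vertex and non-vertex cases, using convexity to propagate dominance from the $v_k$ to all of $\mathcal{K}$) and extract an explicit uniform margin $c_*>0$ over the finitely many pairs $(i,j)$. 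That extra step is exactly what justifies a single $\varepsilon_*$ working for all $i$; the paper leaves it implicit, so your version is, if anything, slightly more complete.
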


\begin{proof}[Proof of \Cref{cl: stab.cell}]
        
Compute
    \begin{equation*}
        \langle B \tilde{x}_i^0,\tilde{v}_{\upsigma(i)}\rangle\geq  \langle B x_i^0,v_{\upsigma(i)}\rangle-C_1(B)\left(\varepsilon+\varepsilon^2\right),
    \end{equation*}
    and, for any $y\neq \tilde{v}_{\upsigma(i)}$,
    \begin{equation*}
        \langle B \tilde{x}_i^0, y\rangle\leq  \langle B x_i^0,y\rangle+C_2(B)\left(\varepsilon+\varepsilon^2\right).
    \end{equation*}
    We find
    $$ 
    \langle B \tilde{x}_i^0,\tilde{v}_{\upsigma(i)}\rangle- \langle B \tilde{x}_i^0, y\rangle\geq \langle B x_i^0,v_{\upsigma(i)}\rangle-\langle B x_i^0,y\rangle+O(\varepsilon).
    $$
    Note that the first term of the right hand side is positive by uniqueness of the $\argmax$ of the unperturbed problem, therefore, there exists some small enough $\varepsilon_*>0$ for which the right hand side is positive.
\end{proof}
    
Therefore, for $\varepsilon$ small enough, the dynamics for $x_i$ simplify to
\[
\dot{x}_i = \tilde{v}_{\upsigma(i)} - x_i, \quad x_i(0) = \tilde{x}_i^0,
\]
since $\tilde{x}_i^0$ lies in the region where $\tilde{v}_{\upsigma(i)}$ is the unique maximizer. Viewing $\tilde{v}_{\upsigma(i)}$ as a fixed parameter, we note that the equation is linear with constant coefficients. By standard continuity results for ODEs with respect to parameters and initial conditions, the resulting trajectory depends continuously on the perturbation, and thus remains close to the unperturbed one.\qed

\subsection{Proof of \Cref{lem: first.phase}} \label{sec: proof.first.phase} \label{sec: proof.lem.first.phase}

The proof is split in four steps.

\subsection*{Step 1. Non-entry time}

Consider the random variable
\begin{equation*} 
    d_{ij}^t\coloneqq\left\|x_j^t-v_{i}\right\|,
\end{equation*}
where $x_j^t$ corresponds to a realization of the process emanating from an initial particle $x_j^0$ lying in the cell $\mathscr{C}_i(v)$.
In this first step, we look to show that
\begin{equation} \label{eq: aprior.bound.vertex}
    d_{ij}^t\geq(1-\gamma)^td_{ij}^0 \hspace{1cm }\text{ with probability 1}.
\end{equation}
From this we will deduce an estimate of the time up to which the particle $x_j^t$ cannot enter a ball centered at the vertex of the cell in which it lies.

We use the following purely geometric fact.

\begin{claim} \label{lem: cone.condition}
Let $i\in\llbracket 1, n\rrbracket$. Then
    \begin{equation*}
        \argmin_{y\in \mathcal{K}} \left\|(1-\gamma)x+\gamma y-v_{i}\right\|=v_{i}, \hspace{1cm} \forall x\in \mathcal{K}.
    \end{equation*}
\end{claim}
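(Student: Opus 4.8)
The plan is to recast the minimization as a metric projection onto a convex set, and then to recognize hypothesis \eqref{eq: hypothesis.polytope} as precisely the optimality inequality for the candidate minimizer $y = v_i$.

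First I would fix a vertex $v_i$ of $\mathcal{K}$, a point $x \in \mathcal{K}$, and $\gamma \in (0,1)$, and introduce the affine bijection $\Phi(y) \coloneqq (1-\gamma)x + \gamma y$ of $\R^d$. It carries the compact convex set $\mathcal{K}$ onto the compact convex set $C \coloneqq (1-\gamma)x + \gamma\mathcal{K}$, and the quantity to be minimized over $y \in \mathcal{K}$ is exactly $\|\Phi(y) - v_i\|$. Since $\Phi$ is injective, it therefore suffices to show that $\Phi(v_i)$ is the metric projection $\proj_C(v_i)$ of $v_i$ onto $C$; uniqueness of the nearest-point projection onto the closed convex set $C$, combined with injectivity of $\Phi$, then yields that $y = v_i$ is the \emph{unique} minimizer.

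Next I would verify the projection identity $\proj_C(v_i) = \Phi(v_i)$ via the standard variational characterization: $\Phi(v_i) = \proj_C(v_i)$ if and only if $\langle v_i - \Phi(v_i),\, z - \Phi(v_i)\rangle \le 0$ for every $z \in C$. Writing $z = \Phi(y)$ with $y \in \mathcal{K}$, the elementary identities $v_i - \Phi(v_i) = (1-\gamma)(v_i - x)$ and $z - \Phi(v_i) = \gamma(y - v_i)$ reduce this inequality to
\[
\langle x - v_i,\, y - v_i\rangle \ge 0 \qquad \text{for all } x, y \in \mathcal{K},
\]
which is exactly what \eqref{eq: hypothesis.polytope} provides: the angle at $v_i$ subtended by any two points of $\mathcal{K}$ being strictly acute forces this inner product to be nonnegative (and positive away from $v_i$). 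This closes the argument.

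I do not anticipate a genuine obstacle here; the only subtleties are bookkeeping ones. One should not presume $v_i \in C$ (it need not be, but the projection argument does not require it), and the uniqueness of the $\argmin$ should be traced back to the injectivity of $\Phi$ together with uniqueness of the projection onto the closed convex set $C$. The whole content of the claim is the single observation that \eqref{eq: hypothesis.polytope} is nothing but the variational inequality for this projection in disguise.
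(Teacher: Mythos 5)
Your proof is correct and takes a genuinely different route from the paper's. The paper argues by a law-of-cosines computation in the triangle with vertices $(1-\gamma)x + \gamma y$, $(1-\gamma)x + \gamma v_i$, and $v_i$, then massages the resulting inequality into a condition involving $\cos(\pi - \theta)$ with $\theta = \angle(x - v_i, y - v_i)$, which is then checked against \eqref{eq: hypothesis.polytope}. You instead push the problem through the affine bijection $\Phi(y) = (1-\gamma)x + \gamma y$ and identify the claim with the statement $\proj_{C}(v_i) = \Phi(v_i)$ for $C = (1-\gamma)x + \gamma\mathcal{K}$, which you then verify via the standard obtuse-angle variational characterization of projection onto a closed convex set. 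The algebra reduces the characterizing inequality to exactly $\langle x - v_i,\, y - v_i\rangle \ge 0$ for all $x, y \in \mathcal{K}$, i.e., the content of \eqref{eq: hypothesis.polytope}. Your route is arguably cleaner and more modular: it replaces an ad hoc trigonometric identification (which in the paper's writing involves some delicate and, as transcribed, slightly imprecise bookkeeping about which sides of the triangle carry the angle $\pi - \theta$) with a single appeal to a textbook optimality criterion, and it delivers uniqueness of the minimizer for free via injectivity of $\Phi$ plus uniqueness of the metric projection. The one bookkeeping point you should make explicit is the observation that $\Phi(v_i) \in C$, which is needed before the variational characterization applies; it is immediate since $v_i \in \mathcal{K}$, but worth a word.
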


\begin{proof}[Proof of \Cref{lem: cone.condition}]
    Let 
    \begin{align*}
     \overrightarrow{(xy)_\gamma}&\coloneqq(1-\gamma)x+\gamma y \\
     \overrightarrow{(xv_i)_\gamma}&\coloneqq(1-\gamma)x+\gamma v_i.
    \end{align*}
    Using $\|A-B\|^2=\|A\|^2+\|B\|^2-2\|A\|\|B\|\cos\angle(A,B)$, we have
    \begin{align*}
        \left\|\overrightarrow{(xy)_\gamma}-v_i\right\|^2 - \left\|\overrightarrow{(xv_i)_\gamma}-v_i\right\|^2 &= \left\|\overrightarrow{(xv_i)_\gamma}-\overrightarrow{(xy)_\gamma}\right\|^2 \\
        &-2\left\|\overrightarrow{(xv_i)_\gamma}-\overrightarrow{(xy)_\gamma}\right\|\left\|\overrightarrow{(xy)_\gamma}-v_i\right\|\cos(\pi-\theta) 
    \end{align*}
    where $\theta=\angle(x-v_i, y-v_i)$. The right-hand side is positive if and only if 
    \begin{equation*}
        \frac{\gamma\left\|y-v_i\right\|}{2(1-\gamma)\left\|x-v_i\right\|}=\frac{\left\|\overrightarrow{(xv_i)_\gamma}-\overrightarrow{(xy)_\gamma}\right\|}{2\left\|\overrightarrow{(xy)_\gamma}-v_i\right\|}>\cos(\pi-\theta)
    \end{equation*}
    which holds if $\theta<\pi/2$, thus holds by \eqref{eq: hypothesis.polytope}. 
\end{proof}

Note that
\begin{align*}
    \left\|x_j^{t+1}-v_{i}\right\|&=\left\|(1-\gamma)x_j^t+\gamma x_\ell^t-v_{i}\right\| &\text{ with probability  } p_{j\to\ell}^t\\
    &\geq \left\|(1-\gamma)x_j^t+\gamma v_{i}-v_{i}\right\| &\text{ with probability }1,
\end{align*}
where the second inequality holds due to \Cref{lem: cone.condition}, and where
\begin{equation*}
    p_{j\to\ell}^t\coloneqq\frac{e^{\beta\langle x_j^t,x_{\ell}^t \rangle}}{\displaystyle\sum_{\iota=1}^n e^{\beta\langle x_j^t,x_{\iota}^t \rangle}}.      
\end{equation*}
We can repeat the above argument: with probability $1$, 
\begin{align*}
    \left\|x_j^{t+1}-v_{i}\right\| &\geq \left\|(1-\gamma)x_j^t+\gamma v_{i}-v_{i}\right\|\\
    &\geq \min_{y\in \mathcal{K}} \left\|(1-\gamma)((1-\gamma)x_j^{t-1}+\gamma y)+\gamma v_{i}-v_{i} \right\|\\
    &=(1-\gamma)\min_{v\in \mathcal{K} } \left\| (1-\gamma)x_j^{t-1}+\gamma v-v_{i} \right\| \\
    &=(1-\gamma)^2 \left\| x_j^{t-1}-v_{i} \right\|,
\end{align*}
where in the last equality we use \Cref{lem: cone.condition}. Iterating yields \eqref{eq: aprior.bound.vertex}.

Define
\begin{equation} \label{eq: a.priori.time}
    T_{1ji}\coloneqq\left\lfloor\frac{1}{\log (1-\gamma)}\log\left(\frac{\uptau}{\|x_j^0-v_{i}\|}\right) \right\rfloor.
\end{equation}
We recall that 
\begin{equation} \label{eq: tau}
    \uptau\coloneqq \min_{i\in\llbracket 1, \kappa\rrbracket}\frac{c_0}{2\max_{j\neq i}\|v_i-v_j\|}\wedge \frac{\sqrt{2c_0}}{2}.
\end{equation}
As a consequence of \eqref{eq: aprior.bound.vertex}, 
\begin{equation*}
    \mathbb{P}\left(x_{j}^t\notin B(v_{i},\uptau)\right)=1\hspace{1cm}\text{ for }t\in \llbracket 0,T_{1ji}\rrbracket.
\end{equation*}
Consider
\begin{align*}
    T_1&\coloneqq\min_{\substack{j\in\llbracket \kappa+1,n\rrbracket \\ i\in\llbracket 1,\kappa\rrbracket}} T_{1ji}\\
    &=\left\lfloor \frac{1}{\log(1-\gamma)}\log\left(\frac{\uptau}{\min_{j\in\llbracket \kappa+1, n\rrbracket}\left\|x_j^0-v_{\upsigma(j)}\right\|}\right)\right\rfloor;
\end{align*}
the last equality follows from \eqref{eq: hypothesis.voronoi} and \Cref{prop: voronoi}. 
One has
\begin{equation*}
    \mathbb{P}\left(x_{j}^t\notin B(v_{i},\uptau)\right)=1 \hspace{1cm} \text{ for }t\in \llbracket 0,T_{1}\rrbracket, \text{ for all }j\neq i\text{ and }i\in \llbracket 1, \kappa\rrbracket.
\end{equation*}

\subsection*{Step 2. Up to the non-entry time, vertices barely move}

Fix $i\in \llbracket 1, \kappa\rrbracket$ and $t\in\llbracket 0, T_1\rrbracket$, and define the random variable
\begin{equation} \label{eq: Mit}
    M_i^t\coloneqq\#\left\{s\in \llbracket 1,t\rrbracket \colon x_i^s\neq x_i^{s-1}\right\}.
\end{equation}
Recalling that the increments of the process \eqref{eq: softmax.process} are bounded by $\gamma\mathsf{d}(\mathcal{K})$, and that $x_i^0=v_i$, we have
\begin{equation*}
    \mathbb{P}\left(x_i^t\in B(v_i,\beta^{-\frac14})\, \bigg|\, M_i^t<\frac{\beta^{-\frac14}}{\gamma\mathsf{d}(\mathcal{K})}\right)=1.
\end{equation*}
Hence
\begin{align*}\label{eq: bound.by.count}
    &\mathbb{P}\left(x_i^t\in B(v_i,\beta^{-\frac14})\right)\nonumber\\
    &=\mathbb{P}\left(x_i^t\in B(v_i,\beta^{-\frac14})\,\bigg|\,M_i^t<\frac{\beta^{-\frac14}}{\gamma\mathsf{d}(\mathcal{K})}\right)\mathbb{P}\left(M_i^t<\frac{\beta^{—\frac14}}{\gamma\mathsf{d}(\mathcal{K})}\right)\nonumber\\
    &+\mathbb{P}\left(x_i^t\in B(v_i,\beta^{-\frac14})\,\bigg|\,M_i^t\geq\frac{\beta^{-\frac14}}{\gamma\mathsf{d}(\mathcal{K})}\right)\mathbb{P}\left(M_i^t\geq\frac{\beta^{-\frac14}}{\gamma\mathsf{d}(\mathcal{K})}\right)\nonumber\\
    &\geq\mathbb{P}\left(M_i^t<\frac{\beta^{-\frac14}}{\gamma\mathsf{d}(\mathcal{K})}\right).
\end{align*}
To lower bound the last probability, we use 

\begin{claim}\label{lem: bound.pii} 
There exists $\beta_*>0$ such that for $t\in\llbracket 0, T_1\rrbracket$ and $i\in\llbracket 1, n\rrbracket$ and $\beta\geq\beta_*$, conditioned on the event 
$$\left\{M_i^t<\frac{\beta^{-\frac14}}{\gamma\mathsf{d}(\mathcal{K})}\right\},$$ 
we have 
    \begin{equation*}
        p_{i\to i}^t\geq 1-ne^{-\beta\uptau/4}.
    \end{equation*}
\end{claim}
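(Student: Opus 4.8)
The plan is to control the self-interaction probability $p_{i\to i}^t$ from below by showing that, on the event that particle $i$ has moved only a negligible amount up to time $t$, the inner product $\langle x_i^t, x_i^t\rangle$ strictly dominates every competing inner product $\langle x_i^t, x_j^t\rangle$ by a fixed gap of order $\uptau$, so the softmax weight on index $i$ is close to $1$. Concretely, I would first fix $i\in\llbracket 1,n\rrbracket$ and $t\in\llbracket 0,T_1\rrbracket$ and condition on $\{M_i^t < \beta^{-1/4}/(\gamma\mathsf{d}(\mathcal{K}))\}$. As recorded just above the claim, each increment of \eqref{eq: softmax.process} has norm at most $\gamma\mathsf{d}(\mathcal{K})$, so on this event $\|x_i^t - x_i^0\| = \|x_i^t - v_i\| < \beta^{-1/4}$; that is, $x_i^t \in B(v_i,\beta^{-1/4})$. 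Meanwhile, from Step 1 (the a priori bound \eqref{eq: aprior.bound.vertex}) we know that \emph{every} particle $x_j^t$ with $j\neq i$ still lies outside $B(v_{\upsigma(j)},\uptau)$ for $t\le T_1$; combined with $x_j^t\in\mathcal{K}$ and $\|v_i - v_j\| $ controlled by $\mathsf d(\mathcal K)$, this pins $x_j^t$ into the region of its own cell at distance $\ge\uptau$ from the competing vertex.

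**The core estimate** is then the scalar comparison: I want $\langle x_i^t, x_i^t - x_j^t\rangle \ge \tfrac{c_0}{2}\uptau$ (say) for all $j\neq i$, whenever $x_i^t\in B(v_i,\beta^{-1/4})$ and $x_j^t\in\mathcal K\setminus B(v_{\upsigma(j)},\uptau)$. To get this I would write $\langle x_i^t,x_i^t-x_j^t\rangle = \langle v_i, v_i - x_j^t\rangle + \langle x_i^t - v_i,\, x_i^t - x_j^t\rangle$. The second term is bounded in absolute value by $\beta^{-1/4}\mathsf d(\mathcal K)$, which is $o(1)$ and hence negligible against the constant gap for $\beta\ge\beta_*$. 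For the first term, by hypothesis \eqref{eq: bound.max} one has $\langle v_i,v_i\rangle \ge \langle v_i, v_\ell\rangle + c_0$ for every vertex $v_\ell\ne v_i$, and by \eqref{eq: hypothesis.voronoi} together with \Cref{prop: voronoi} the cells are the genuine Voronoi cells, so a point $x_j^t$ in cell $\mathscr C_{\upsigma(j)}(v)$ that is at distance $\ge\uptau$ from $v_{\upsigma(j)}$ still satisfies $\langle v_i, v_i - x_j^t\rangle$ bounded below by a positive multiple of $\uptau$ — this is exactly the geometric role of the choices in \eqref{eq: tau}, namely $\uptau\le c_0/(2\max_{j\ne i}\|v_i-v_j\|)$ and $\uptau\le \sqrt{2c_0}/2$. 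Expanding $\langle v_i, v_i - x_j^t\rangle$ using $x_j^t = \lambda v_{\upsigma(j)} + (1-\lambda)(\text{rest of cell})$ and the polarization identity gives a lower bound of the form $\uptau\cdot\min_{\ell\ne i}\tfrac{c_0}{\|v_i-v_\ell\|}\ge 2\uptau$, absorbing the cross terms via the two constraints on $\uptau$.

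**Finishing the bound** is then routine: once $\langle x_i^t, x_i^t\rangle - \langle x_i^t, x_j^t\rangle \ge \uptau$ for all $j\ne i$ (after enlarging $\beta_*$ to kill the $o(1)$ term — note it is the raw gap $\uptau$ rather than $c_0/2$ that survives, matching the exponent in the claim), one computes
\[
p_{i\to i}^t = \frac{1}{1 + \sum_{j\ne i} e^{\beta(\langle x_i^t,x_j^t\rangle - \langle x_i^t,x_i^t\rangle)}} \ge \frac{1}{1 + (n-1)e^{-\beta\uptau/4}} \ge 1 - n e^{-\beta\uptau/4},
\]
where I have been slightly generous with the constant $1/4$ (any fixed fraction of $\uptau$ works, and $1/4$ leaves room for the $o(1)$ correction and for the polarization cross terms), using $1/(1+x)\ge 1-x$.

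\medskip

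\noindent\textbf{Main obstacle.} The delicate point is the geometric lower bound $\langle v_i, v_i - x_j^t\rangle \gtrsim \uptau$ for \emph{all} $j\ne i$ simultaneously: one must use that $x_j^t$ lies in its Voronoi cell \emph{and} is uniformly bounded away from that cell's vertex, and verify that the two explicit caps on $\uptau$ in \eqref{eq: tau} are precisely what is needed to dominate the cross terms coming from $\|v_i - v_j\|$ and from the curvature of the sphere $\|v_i\| = \|v_j\|$. Everything else — the conditioning on $M_i^t$, the increment bound, and the final softmax manipulation — is bookkeeping. A secondary (easy) point is to make sure $\beta_*$ can be chosen uniformly over $i\in\llbracket 1,n\rrbracket$ and $t\in\llbracket 0,T_1\rrbracket$, which is immediate since the bound depends on $\beta$, $n$, and the fixed geometry of $\mathcal K$ only.
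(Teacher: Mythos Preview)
Your overall scaffold---locate $x_i^t$ in $B(v_i,\beta^{-1/4})$ via the increment bound, establish a uniform inner-product gap $\langle x_i^t,x_i^t\rangle-\langle x_i^t,x_j^t\rangle\ge c\uptau$, then read off the softmax bound---is the right shape, and the final two displayed lines are exactly how the paper finishes. The gap is in the middle step, and it is genuine.

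You write that Step~1 gives $x_j^t\notin B(v_{\upsigma(j)},\uptau)$ and that this ``pins $x_j^t$ into the region of its own cell''. Neither is quite what Step~1 delivers, and the second is not true with probability~1. What Step~1 actually establishes (via \Cref{lem: cone.condition}, which holds for \emph{any} $x\in\mathcal K$) is the stronger statement $x_j^t\notin B(v_i,\uptau)$ for \emph{every} vertex $v_i$ with $i\neq j$---in particular for the fixed $i$ under study. It says nothing about $x_j^t$ remaining in $\mathscr C_{\upsigma(j)}(v)$: a single stochastic step can send $x_j$ toward a particle in a different cell, and the convex combination can land outside $\mathscr C_{\upsigma(j)}(v)$. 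Cell-confinement of the $x_j$'s is only obtained later (Step~3), with high probability and \emph{conditioned} on the very bound on $p_{i\to i}^t$ you are trying to prove here, so you cannot invoke it.

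The paper sidesteps this by working with an arbitrary point $\zeta\in\mathcal K\setminus B(v_i,\uptau)$---exactly the set Step~1 guarantees---and splitting cases on whether $\zeta\in\mathscr C_i(v)$ (the cell of the \emph{fixed} vertex $i$) or $\zeta\in\mathcal K\setminus\mathscr C_i(v)$. In the first case $\|\zeta\|\le\|v_i\|$ combines with $\|v_i-\zeta\|\ge\uptau$ to give $\langle v_i,\zeta\rangle\le\|v_i\|^2-\uptau^2/2$; in the second case one maximizes $\langle v_i,\cdot\rangle$ over $\conv(\mathcal K\setminus\mathscr C_i(v))$, whose vertices are either original $v_j$ (handled by \eqref{eq: bound.max}) or lie on $\partial\mathscr C_i(v)$ (handled by the first case). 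This yields $\langle y,y\rangle-\langle y,\zeta\rangle\ge\uptau^2/2-O(\beta^{-1/4})$, and the roles of the two caps in \eqref{eq: tau} are precisely to align $c_0$ with $\uptau^2/2$ in the second case. A smaller point: your displayed bound $\uptau\cdot\min_{\ell\ne i}c_0/\|v_i-v_\ell\|\ge 2\uptau$ is off by a factor of $\uptau$ (the left side is $\ge 2\uptau^2$ from the definition of $\uptau$, not $\ge 2\uptau$), which is consistent with the paper's gap being of order $\uptau^2$.
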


\begin{proof}[Proof of \Cref{lem: bound.pii}]
    Since we condition on $M_i^t<\frac{\beta^{-\frac14}}{\gamma\mathsf{d}(\mathcal{K})}$, we have $x_i^t\in B(v_i,\beta^{-\frac14})$ with probability $1$. 
    Furthermore, since $t\in \llbracket 0,T_1\rrbracket$, $x_j^t\notin B(v_i,\uptau)$. 
    Take an arbitrary $y\in B(v_i,\beta^{-\frac14})$  and $\zeta\in B(v_i,\uptau)^c\cap\mathcal{K}$. 
    
    We separately consider the cases $\zeta\in \mathscr{C}_i(v)\cap B(v_i,\uptau)^c$ and $\zeta\notin \mathscr{C}_i(v)$. Let us start by $\zeta\in \mathscr{C}_i(v)\cap B(v_i,\uptau)^c$. Then
    \begin{equation*}
        \uptau^2\leq \|v_i-\zeta\|^2=\|v_i\|^2+\|\zeta\|^2-2\langle \zeta,v_i\rangle;
    \end{equation*}
    so
    \begin{equation*}
        \uptau^2\leq \|v_i\|^2+\|\zeta\|^2-2\langle \zeta,v_i\rangle.
    \end{equation*} 
    Consequently            
    \begin{equation} \label{eq: bound.tau}
        \langle v_i,\zeta\rangle\leq \frac{1}{2}\|v_i\|^2+\frac{1}{2}\|\zeta\|^2-\frac{\uptau^2}{2}\leq \|v_i\|^2-\frac{\uptau^2}{2}
    \end{equation}
    where the last inequality stems from $\zeta\in \mathscr{C}_i(v)$ and so $\|w\|\leq \|v_i\|$. 
    Indeed, $\zeta \in \mathscr{C}_i(v)$ implies $\langle \zeta,v_i\rangle\geq \langle \zeta,\zeta\rangle$, whilst $v_i\in \mathscr{C}_i(v)$ by \eqref{eq: bound.max} whereupon we gather $\langle \zeta,v_i\rangle\leq \langle v_i,v_i\rangle$.
    
    We move to the other case: fix $\zeta\in \mathcal{K}\setminus \mathscr{C}_i(v)$ and consider the set 
    \begin{equation*}
    \mathscr{S}\coloneqq \conv(\mathcal{K}\setminus \mathscr{C}_i(v)).    
    \end{equation*}
    Because of the choice of $\uptau$ in \eqref{eq: tau}, we have $\mathscr{S}\cap B(v_i,\uptau)=\varnothing$. We also have
    \begin{equation*}
        \max_{\zeta\in \mathcal{K}\setminus \mathscr{C}_i(v)}\langle \zeta,v_i\rangle\leq  \max_{\zeta\in\mathscr{S}}\langle \zeta,v_i\rangle.
    \end{equation*}
    Let $\zeta_*\in\mathscr{S}$ be the maximizer. Since $\mathscr{S}$ is a convex polytope, 
    \begin{equation*}
        \zeta_*=\sum_{j\neq i } m_jv_j+\sum_{k=1}^{P} m_{k}v_{k},\hspace{1cm}\text{ with }\sum_{j\neq i}m_j+\sum_{k=1}^P m_k=1
    \end{equation*}
    where $\{v_{k}\}_{k\in[P]}$ are the new vertices in $\mathscr{S}$ that lie on the boundary of $\mathscr{C}_i(v)$. Then, owing to \eqref{eq: bound.tau} and  \eqref{eq: bound.max} we have
    \begin{align*}
        \langle \zeta_*,v_i\rangle &=\sum_{j\neq i} m_j\langle v_j,v_i\rangle+\sum_{k=1}^P m_{k}\langle v_{k},v_i\rangle\\
        &\leq  (\|v_i\|^2-c_0)  \sum_{j\neq i} m_j+\left(\|v_i\|^2-\frac{\uptau^2}{2}\right)\sum_{k=1}^Pm_{k}\langle v_{k},v_i\rangle\\
        &\leq\max\left\{ \|v_i\|^2-c_0 , \,\|v_i\|^2-\frac{\uptau^2}{2}\right\}.
    \end{align*}
    By \eqref{eq: tau} we also have $(\|v_i\|^2-c_0) \leq (\|v_i\|^2-\uptau^2/2)$, therefore
    \begin{equation}\label{eq: upper.bound.self}
        \langle \zeta,v_i\rangle\leq \|v_i\|^2-\frac{\uptau^2}{2}\hspace{1cm} \text{ for }\omega\in \mathcal{K}\setminus B(v_i,\uptau).
    \end{equation}                
    On the other hand, $y\in B(v_i,\beta^{-\frac14})$, hence $y=v_i+\Delta$ for some $\| \Delta \| \leq \beta^{-\frac14}$, and satisfies
    \begin{equation} \label{eq: bound.delta}
        \langle y,y\rangle\geq \|v_i\|^2-2\mathsf{d}(\mathcal{K})\beta^{-\frac14}-\beta^{-\frac12}.
    \end{equation}
    Combining with \eqref{eq: upper.bound.self}, we have that
    \begin{equation}\label{eq: bound.tau.delta}
        \langle y,\zeta\rangle=\langle  v_i,\zeta\rangle+\langle \Delta, \zeta\rangle\leq {\color{black} \|v_i\|^2-\frac{\uptau^2}{2}+\beta^{-\frac14}\mathsf{d}(\mathcal{K})}. 
    \end{equation}
    Finally, gathering \eqref{eq: bound.delta} and \eqref{eq: bound.tau.delta} we find
    \begin{equation}\label{eq: y.against.omega}
        \langle y,y\rangle-\langle y,\zeta\rangle\geq {\color{black} \frac{\uptau^2}{2} -3\mathsf{d}(\mathcal{K})\beta^{-\frac14}-\beta^{-\frac12} }.
    \end{equation}
    Thus for $\beta$ large enough, using \eqref{eq: y.against.omega}, we have 
    \begin{align*}
        p_{i\to i}^t&=\frac{e^{\beta\langle x_i^t,x_i^t\rangle}}{\displaystyle\sum_{j=1}^n e^{\beta\langle x_i^t,x_j^t\rangle}}=\frac{1}{\displaystyle 1+\sum_{j\neq i} e^{\beta(\langle x_i^t,x_j^t\rangle-\langle x_i^t,x_i^t\rangle)}}\geq \frac{1}{1+ (n-1) e^{\beta\uptau/4}}\\
        &\geq 1-ne^{-\beta\uptau/4}.\qedhere
    \end{align*}
\end{proof}
    Set 
    \begin{equation} \label{eq: bound.p}
        p=p(\beta)\coloneqq ne^{-\beta\uptau/4}.
    \end{equation}
    Using \Cref{lem: bound.pii}, we show that 
    \begin{align} \label{eq: stochastic.dominance}
        \mathbb{P}\left(M^t_i<\frac{\beta^{-\frac14}}{\gamma\mathsf{d}(\mathcal{K})}\right)&\geq \sum_{m=0}^{\left\lfloor\frac{\beta^{-\frac14}}{\gamma\mathsf{d}(\mathcal{K})}\right\rfloor} \mathrm{Bin}(m,t,p)\\
        &=\sum_{m=0}^{\left\lfloor\frac{\beta^{-\frac14}}{\gamma\mathsf{d}(\mathcal{K})}\right\rfloor} \begin{pmatrix}
                    t\\
                    m
        \end{pmatrix}(1-p)^{t-m}p^m \nonumber \\
        &=1- \sum_{\left\lfloor\frac{\beta^{-\frac14}}{\gamma\mathsf{d}(\mathcal{K})}\right\rfloor}^t \begin{pmatrix}
                    t\\
                    m
        \end{pmatrix}(1-p)^{t-m}p^m\nonumber,
    \end{align}
    where $\mathrm{Bin}(m,t,p)$ denotes the usual Binomial distribution. Inequality \eqref{eq: stochastic.dominance} follows from the following claim.

    \begin{claim}\label{cl: stochastic.dominance}    
    Let $\{ b^s \}_{s=0}^t$ be independent Bernoulli random variables with parameters $\{ p^s \}_{s=0}^t$. Assume that $p^s \geq p$ for all $s \in \llbracket 0, t\rrbracket$, and define
    \[
        S = \sum_{s=0}^t b^s.
    \]
    Then, for all $\xi \in \mathbb{R}$, one has
    \[
    \mathbb{P}(S \leq \xi) \leq \mathbb{P}(\mathrm{Bin}(t+1, p) \leq \xi).
    \]
    \end{claim}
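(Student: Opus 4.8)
The plan is to prove \Cref{cl: stochastic.dominance} by a single monotone coupling, the textbook device for stochastic ordering. I would work on a probability space carrying i.i.d.\ uniform variables $U^0,\dots,U^t\sim\mathrm{Unif}[0,1]$ and set, for $s\in\llbracket 0,t\rrbracket$,
\[
b^s \coloneqq \mathbbm{1}\{U^s\leq p^s\}, \qquad c^s \coloneqq \mathbbm{1}\{U^s\leq p\}.
\]
By independence of the $U^s$, the family $(b^s)_s$ consists of independent $\mathrm{Bernoulli}(p^s)$ variables, hence has the same joint law as the family in the statement, so we may take $S=\sum_{s=0}^t b^s$; simultaneously $(c^s)_s$ are i.i.d.\ $\mathrm{Bernoulli}(p)$, so $C\coloneqq\sum_{s=0}^t c^s\sim\mathrm{Bin}(t+1,p)$, the index set $\llbracket 0,t\rrbracket$ having exactly $t+1$ elements.

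The crux is the deterministic domination $c^s\leq b^s$, valid for every $s$ because $p\leq p^s$ forces $\{U^s\leq p\}\subseteq\{U^s\leq p^s\}$. Summing over $s$ gives $C\leq S$ on the whole sample space, so $\{S\leq\xi\}\subseteq\{C\leq\xi\}$ for any $\xi\in\mathbb{R}$, and taking probabilities yields
\[
\mathbb{P}(S\leq\xi)\leq\mathbb{P}(C\leq\xi)=\mathbb{P}\bigl(\mathrm{Bin}(t+1,p)\leq\xi\bigr),
\]
which is exactly the asserted inequality.

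There is no genuine obstacle in this argument; the only point requiring a moment's care is the bookkeeping of the index set, so that the comparison binomial has the correct number of trials (here $t+1$, matching $|\llbracket 0,t\rrbracket|$; when the claim is later invoked over $\llbracket 1,t\rrbracket$ this becomes $t$ trials, as in \eqref{eq: stochastic.dominance}). Applied to the indicators of the one-step ``stay'' events $\{x_i^s=x_i^{s-1}\}$ of the chain, whose conditional probabilities are bounded below by $1-p$ with $p$ as in \eqref{eq: bound.p} by virtue of \Cref{lem: bound.pii}, this claim is precisely the device that converts the per-step estimate $p_{i\to i}^t\geq 1-p$ into the binomial tail bound \eqref{eq: stochastic.dominance} on $M_i^t$.
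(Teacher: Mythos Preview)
Your proof is correct and follows essentially the same approach as the paper: both construct the monotone coupling via i.i.d.\ uniforms $U^s$ and the representations $b^s=\mathbbm{1}\{U^s\leq p^s\}$, $c^s=\mathbbm{1}\{U^s\leq p\}$, deduce $C\leq S$ pointwise, and pass to the CDF inequality. Your final step (the inclusion $\{S\leq\xi\}\subseteq\{C\leq\xi\}$) is in fact a bit more streamlined than the paper's, which introduces auxiliary events and an exceptional null set to reach the same conclusion.
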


\begin{proof}[Proof of \Cref{cl: stochastic.dominance}]
    
    Let $y^0, \dots, y^t$ be i.i.d. Bernoulli random variables with parameter $p$, and define
    $$
    B = \sum_{s=0}^t y^t \sim \mathrm{Bin}(t+1,p).
    $$
    To prove the result, we introduce i.i.d.\ uniform random variables $u^0, \dots, u^t$ on $[0,1]$, for which it holds that
    \begin{equation*}
        b^s = \mathbf{1}_{\{u^s \leq p^s\}},  \quad  y^s = \mathbf{1}_{\{u^s \leq p\}}, \quad \forall s\in \llbracket 0, t\rrbracket.
    \end{equation*}
    Since $p^s \geq p$, we have $\{u^s \leq p\} \subseteq \{u^s \leq p^s\}$, hence $b^s \geq y^s$ a.s. in $s$ and therefore, 
    $$
        S = \sum_{s=0}^t b^s \geq \sum_{s=0}^t y^s = B \quad \text{a.s.}
    $$
    We just need to check that $S \geq B$ a.s. implies the monotonicity for the cumulative density functions, that is,
    $$
        \mathbb{P}(S\leq \xi)\leq\mathbb{P}(B\leq \xi).
    $$
    Indeed, fix $\xi \in \mathbb{R}$ and define the events
    $$
        C = \{ \omega : S(\omega) \leq \xi \}, \quad D = \{ \omega : B(\omega) \leq \xi \}.
    $$
    Let $E = \{ \omega : S(\omega) < B(\omega) \}$. Since $S \geq B$ a.s., we have $\mathbb{P}(E) = 0$. Now, if $S(\omega) \leq \xi$ and $S(\omega) \geq B(\omega)$, then $B(\omega) \leq \xi$ as well, and thus $C \setminus D \subseteq E$. This implies that $\mathbb{P}(C \setminus D) = 0$ and we obtain
    $$
        \mathbb{P}(S \leq \xi) = \mathbb{P}(C) = \mathbb{P}(C \cap D) +\mathbb{P}(C \setminus D) \leq \mathbb{P}(D) = \mathbb{P}(B \leq \xi),
    $$
    as desired.
\end{proof}

    Since
    \begin{equation*}
        \sum_{m = \left\lfloor\frac{\beta^{-\frac14}}{\gamma\mathsf{d}(\mathcal{K})}\right\rfloor}^t 
        \begin{pmatrix}
                    t\\
                    m
        \end{pmatrix}(1-p)^{t-m}p^m\leq p^{\left\lfloor\frac{\beta^{-\frac14}}{\gamma\mathsf{d}(\mathcal{K})}\right\rfloor}2^t
    \end{equation*}
    we conclude that
    \begin{equation} \label{eq: bound.prob.ball}
        \mathbb{P}\left(x_i^t\in B(v_i,\beta^{-\frac14})\right)\geq 1-p^{\left\lfloor\frac{\beta^{-\frac14}}{\gamma\mathsf{d}(\mathcal{K})}\right\rfloor}2^t\hspace{1cm}\text{ for } (t, i)\in\llbracket 0, T_1\rrbracket \times \llbracket 1, \kappa\rrbracket.
    \end{equation}
    Due to the form of $p$, this already yields one part of the statement, should $\beta$ be large enough.

\subsection*{Step 3. Probability that an interior point drifts away from its origin}

    We proceed similarly in bounding
    \begin{align*}
        \mathbb{P}\Bigg(x_j^t\in \mathscr{I}_{\upsigma(j)}(\beta^{-\frac18})\,\bigg|\, & x_j^0\in \mathscr{I}_{\upsigma(j)}(\beta^{-\frac18})\ominus\beta^{-\frac14}B_1,\, \\
        &x_{i}^t\in B(v_{i}^0,\beta^{-\frac14}),\, \text{ for all } i\in \llbracket 1,\kappa\rrbracket\Bigg)
    \end{align*}
    for $t\in \llbracket 0, T_1\rrbracket$ and $j\in\llbracket \kappa+1, n\rrbracket$.
    We prove and use a couple of facts. The first one is purely geometric.

\begin{claim} \label{lem: geometry}
    Fix $i\in\llbracket1,\kappa\rrbracket$. For any $\eta>0$, $x\in \mathscr{I}_i(\eta)$ and $z\in\mathcal{K}\setminus B(v_i,\uptau)$, we have
    \[
    \langle x, z \rangle \leq \langle x, v_i \rangle - \frac{\upeta\uptau}{\mathsf{d}(\mathcal{K})}.
    \]
\end{claim}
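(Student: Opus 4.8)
The plan is to reduce the statement to a coordinatewise application of the inequalities defining $\mathscr{I}_i(\eta)$ after writing $z$ in barycentric coordinates. First I would use that $z\in\mathcal{K}=\conv\{v_1,\dots,v_\kappa\}$ to write $z=\sum_{j=1}^\kappa\lambda_j v_j$ with $\lambda_j\geq 0$ and $\sum_j\lambda_j=1$. Since $\sum_j\lambda_j=1$ we have $v_i-z=\sum_j\lambda_j(v_i-v_j)$, so
\[
\langle x,\,v_i-z\rangle \;=\; \sum_{j=1}^\kappa \lambda_j\,\langle v_i-v_j,\,x\rangle \;=\; \sum_{j\neq i}\lambda_j\,\langle v_i-v_j,\,x\rangle \;\geq\; \eta\sum_{j\neq i}\lambda_j \;=\; \eta\,(1-\lambda_i),
\]
where the $j=i$ term vanishes and each remaining term is at least $\eta$ because $x\in\mathscr{I}_i(\eta)$ means $\langle v_i-v_j,x\rangle\geq\eta$ for all $j\neq i$.

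The second step is to convert the hypothesis $z\notin B(v_i,\uptau)$, i.e. $\|z-v_i\|\geq\uptau$, into a lower bound on $1-\lambda_i$. Writing $z-v_i=\sum_{j\neq i}\lambda_j(v_j-v_i)$, the triangle inequality together with $\|v_j-v_i\|\leq\diam(\mathcal{K})=\mathsf{d}(\mathcal{K})$ gives
\[
\uptau \;\leq\; \|z-v_i\| \;\leq\; \sum_{j\neq i}\lambda_j\,\|v_j-v_i\| \;\leq\; \mathsf{d}(\mathcal{K})\sum_{j\neq i}\lambda_j \;=\; \mathsf{d}(\mathcal{K})\,(1-\lambda_i),
\]
hence $1-\lambda_i\geq\uptau/\mathsf{d}(\mathcal{K})$. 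Substituting into the first display yields $\langle x,v_i-z\rangle\geq\eta\uptau/\mathsf{d}(\mathcal{K})$, which is exactly the claim after moving the $\langle x,z\rangle$ term to the right-hand side.

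I do not anticipate any genuine obstacle: the whole argument is a short convexity-plus-triangle-inequality computation. The only points requiring a little care are that the barycentric representation of $z$ must be taken over the vertices $v_1,\dots,v_\kappa$ of $\mathcal{K}$ — precisely the points appearing in the definition of $\mathscr{I}_i(\eta)$, so that its defining inequalities apply term by term — and that the open-versus-closed ball convention is immaterial here since we only ever invoke $\|z-v_i\|\geq\uptau$.
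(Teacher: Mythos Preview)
Your proposal is correct and follows essentially the same approach as the paper: write $z$ in barycentric coordinates over the vertices, apply the defining inequalities of $\mathscr{I}_i(\eta)$ termwise to get $\langle x,v_i-z\rangle\geq\eta(1-\lambda_i)$, then use the triangle inequality on $z-v_i=\sum_{j\neq i}\lambda_j(v_j-v_i)$ together with $\|z-v_i\|\geq\uptau$ to bound $1-\lambda_i\geq\uptau/\mathsf{d}(\mathcal{K})$.
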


\begin{proof}[Proof of \Cref{lem: geometry}]
    Any point \( z \in \mathcal{K} \) can be written as
    \[
    z = \sum_{j=1}^\kappa \lambda_j v_j, \quad \text{where } \lambda_j \geq 0, \quad \sum_{j=1}^\kappa \lambda_j = 1,
    \]
thus
\[
\langle x, z \rangle = \sum_{j=1}^k \lambda_j \langle x, v_j \rangle = \langle x, v_i \rangle - \sum_{j \neq i} \lambda_j \langle x, v_i - v_j \rangle.
\]
Since \( x \in \mathscr{I}_i(\eta) \), we have 
\[
\langle x, z \rangle \leq \langle x, v_i \rangle - \eta \sum_{j \neq i} \lambda_j = \langle x, v_i \rangle - \eta (1 - \lambda_i).
\]
Then
\[
\|z - v_i\| = \left\| \sum_{j \neq i} \lambda_j (v_j - v_i) \right\| \leq \sum_{j \neq i} \lambda_j \|v_j - v_i\| \leq \mathsf{d}(\mathcal{K})  (1 - \lambda_i).
\]
As \( \|z - v_i\| \geq \uptau \), we have $1 - \lambda_i \geq \frac{\uptau}{\mathsf{d}(\mathcal{K})}$, and substituting into the earlier bound yields the claim.
\end{proof}

We crucially need
    
\begin{claim} \label{lem: bound.pjphi(j)}  
    There exists $\beta_*>0$ such that for $t\in\llbracket 0, T_1\rrbracket$, conditioned on 
    $$
    \left\{ x_i^t\in B(v_i,\beta^{-\frac14}) \text{ for all } i\in\llbracket 1, \kappa\rrbracket \right\}
    ,
    $$ 
    we have that
    \begin{equation*}
        p_{j\to\upsigma(j)}^t\geq 1-ne^{-\beta^{\frac78}\uptau/2}.
    \end{equation*}
\end{claim}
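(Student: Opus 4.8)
The plan is to estimate the attention probability $p_{j\to\upsigma(j)}^t = e^{\beta\langle x_j^t, x_{\upsigma(j)}^t\rangle}/\sum_{\ell=1}^n e^{\beta\langle x_j^t, x_\ell^t\rangle}$ by showing that the exponent $\langle x_j^t, x_{\upsigma(j)}^t\rangle$ dominates every other exponent $\langle x_j^t, x_\ell^t\rangle$ by a gap of order $\beta^{-1/8}$. First I would use the a~priori bound from Step~1: since $t\in\llbracket 0, T_1\rrbracket$, no particle has entered any ball $B(v_i,\uptau)$, so in particular $x_j^t \notin B(v_i,\uptau)$ for the relevant indices, and by the non-entry estimate \eqref{eq: aprior.bound.vertex} the particle $x_j^t$ stays in $\mathscr{I}_{\upsigma(j)}(\beta^{-1/8})$ (this is exactly the conditioning set in Step~3, whose probability is being controlled; here we work conditionally on it). Write $i=\upsigma(j)$ for brevity.

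Next I would split the competitors $x_\ell^t$ into two groups. For $\ell$ such that $x_\ell^t$ lies far from $v_i$, i.e. $x_\ell^t \in \mathcal{K}\setminus B(v_i,\uptau)$, I would apply Claim~\ref{lem: geometry} with $\eta=\beta^{-1/8}$, $x = x_j^t$, $z = x_\ell^t$, giving
\[
\langle x_j^t, x_\ell^t\rangle \leq \langle x_j^t, v_i\rangle - \frac{\beta^{-1/8}\uptau}{\mathsf{d}(\mathcal{K})}.
\]
For the (at most $\kappa$) competitors $x_\ell^t$ that are close to $v_i$—these are precisely the vertex-particles $x_i^t$ with $i\in\llbracket 1,\kappa\rrbracket$, which by the conditioning lie in $B(v_i,\beta^{-1/4})$—the only one that can be a near-maximizer is $x_{\upsigma(j)}^t$ itself; all the others satisfy $\|x_\ell^t - v_i\|\geq \uptau - \beta^{-1/4}\geq \uptau/2$ for $\beta$ large, so they again fall (essentially) into the far group and the same bound applies up to an $O(\beta^{-1/4})$ correction. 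Then I must compare the true exponent $\langle x_j^t, x_{\upsigma(j)}^t\rangle$ with $\langle x_j^t, v_i\rangle$: since $x_{\upsigma(j)}^t = v_i + \Delta$ with $\|\Delta\|\leq\beta^{-1/4}$, we lose only $\langle x_j^t,\Delta\rangle \geq -\mathsf{d}(\mathcal{K})\beta^{-1/4}$ (using that $\|x_j^t\|\leq \mathsf{d}(\mathcal{K})$ or a diameter bound). Combining, the gap between the winning exponent and every other exponent is at least $\tfrac{\uptau}{\mathsf{d}(\mathcal{K})}\beta^{-1/8} - O(\beta^{-1/4})$, which for $\beta\geq\beta_*$ is at least, say, $\tfrac12\tfrac{\uptau}{\mathsf{d}(\mathcal{K})}\beta^{-1/8}$; absorbing constants (and, as elsewhere in the paper, slightly weakening the exponent to match the stated $\beta^{7/8}$ scaling after multiplying by $\beta$) gives $\beta(\langle x_j^t,x_\ell^t\rangle - \langle x_j^t,x_{\upsigma(j)}^t\rangle) \leq -\beta^{7/8}\uptau/2$ for each $\ell\neq\upsigma(j)$.

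Finally I would conclude exactly as in Claim~\ref{lem: bound.pii}:
\[
p_{j\to\upsigma(j)}^t = \frac{1}{1 + \sum_{\ell\neq\upsigma(j)} e^{\beta(\langle x_j^t,x_\ell^t\rangle - \langle x_j^t,x_{\upsigma(j)}^t\rangle)}} \geq \frac{1}{1 + (n-1)e^{-\beta^{7/8}\uptau/2}} \geq 1 - n e^{-\beta^{7/8}\uptau/2}.
\]
The main obstacle I anticipate is handling the near-vertex competitors cleanly: Claim~\ref{lem: geometry} only gives a good bound for $z\notin B(v_i,\uptau)$, so one has to carefully use the $\beta^{-1/4}$-closeness of the vertex-particles to the true vertices (guaranteed by Step~2) to argue that no spurious competitor sits inside $B(v_i,\uptau)$ other than $x_{\upsigma(j)}^t$, and to bound the resulting error terms so that they are genuinely lower-order than the $\beta^{-1/8}$ margin. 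Everything else is bookkeeping of constants depending on $n$ and the geometry of $\mathcal{K}$, and choosing $\beta_*$ large enough to swallow the $O(\beta^{-1/4})$ corrections.
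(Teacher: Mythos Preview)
Your proposal is correct and follows essentially the same approach as the paper: fix $x_j^t\in\mathscr{I}_{\upsigma(j)}(\beta^{-1/8})$, use Claim~\ref{lem: geometry} with $\eta=\beta^{-1/8}$ to bound $\langle x_j^t,z\rangle$ for every competitor $z$ outside $B(v_{\upsigma(j)},\uptau)$, use the $\beta^{-1/4}$-closeness of $x_{\upsigma(j)}^t$ to $v_{\upsigma(j)}$ to pass from $\langle x_j^t,v_{\upsigma(j)}\rangle$ to $\langle x_j^t,x_{\upsigma(j)}^t\rangle$, and conclude via the same elementary inequality on $p_{j\to\upsigma(j)}^t$. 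Your ``main obstacle'' concerning near-vertex competitors is handled just as you anticipate (and the paper leaves implicit): for $t\le T_1$ every non-vertex particle stays outside $B(v_{\upsigma(j)},\uptau)$ by Step~1, and for vertex particles $x_\ell^t$ with $\ell\neq\upsigma(j)$ the conditioning plus $\|v_\ell-v_{\upsigma(j)}\|\ge\sqrt{2c_0}\ge 2\uptau$ (from \eqref{eq: bound.max} and \eqref{eq: hypothesis.voronoi}) keeps them outside as well.
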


\begin{proof}[Proof of \Cref{lem: bound.pjphi(j)}]
    Fix $x_j\in \mathscr{I}_{\upsigma(j)}(\beta^{-\frac18})$.
    For $y\in B(v_{\upsigma(j)},\beta^{-\frac14})$, as $y=v_{\upsigma(j)}+\Delta$ with $\| \Delta\| \leq \beta^{-\frac14}$, we have
    \begin{equation} \label{eq: bound.delta.2}
        \langle x_j,y\rangle\geq\langle x_j,v_{\upsigma(j)}\rangle -\beta^{-\frac14}\|x_j\|\geq \langle x_j,v_{\upsigma(j)}\rangle -\beta^{-\frac14} \mathsf{d}(\mathcal{K}).
    \end{equation}    
    On the other hand, by virtue of \Cref{lem: geometry}, we have 
    \begin{equation} \label{eq: bound.tau.2}
        \left\langle x_j, z \right\rangle\leq\left\langle x_j,v_{\upsigma(j)}\right\rangle-\uptau\beta^{-\frac18}
    \end{equation}
    for all $z\notin B(v_{\upsigma(j)},\uptau)^c\cap\mathcal{K}$. 
    Using \eqref{eq: bound.delta.2} and \eqref{eq: bound.tau.2} we obtain
    \begin{equation*}
        \langle x_j, y \rangle-\left\langle x_j,z\right\rangle\leq -\uptau\beta^{-\frac18}+\beta^{-\frac14}\mathsf{d}(\mathcal{K}),
    \end{equation*}
    which implies that for $\beta$ large enough,
    \begin{equation*}
        \langle x_j, \omega \rangle-\langle x_j,z\rangle\leq -\beta^{-\frac18}\frac{\uptau}{2}.
        \end{equation*}
    This implies
    \begin{equation*}
        p^t_{j\to\upsigma(j)}\geq 1-ne^{-\beta^{\frac78}\uptau/2},
    \end{equation*}
    as desired.
\end{proof}
Set 
\begin{equation*} \label{eq: alpha}
    \widetilde{p}\coloneqq ne^{-\beta^{\frac78}/\uptau/2}.
\end{equation*}
Following the same arguments as in the previous step, we  can deduce

\begin{align*}
    \mathbb{P}\Bigg(x_j^t\in \mathscr{I}_{\upsigma(j)}(\beta^{-\frac18})\,\bigg|\, &x_j^0\in \mathscr{I}_{\upsigma(j)}(\beta^{-\frac18})\ominus\beta^{-\frac14}B_1,\\ 
    & x_{i}^t\in B(v_{i},\beta^{-\frac14}),\, \text{ for all } i\in \llbracket 1,\kappa\rrbracket\Bigg)\\
    \geq 1-\widetilde{p}^{\left\lfloor\frac{\beta^{-\frac14}}{\gamma\,\mathsf{d}(\mathcal{K})}\right\rfloor}2^t,
\end{align*}
and using \eqref{eq: bound.prob.ball} we also obtain
\begin{align} \label{eq: bound.prob.cell}
    &\mathbb{P}\left(x_j^t\in \mathscr{I}_{\upsigma(j)}(\beta^{-\frac18})\,\bigg| \, x_j^0\in \mathscr{I}_{\upsigma(j)}(\beta^{-\frac18})\ominus\beta^{-\frac14}B_1\right) \nonumber\\
    &\qquad\qquad \geq \mathbb{P}\Bigg( x_j^t\in \mathscr{I}_{\upsigma(j)}(\beta^{-\frac18})\,\bigg|\, x_j^0\in \mathscr{I}_{\upsigma(j)}(\beta^{-\frac18})\ominus\beta^{-\frac14}B_1, \nonumber\\
    &\qquad\qquad\qquad\qquad x_{i}^t\in B(v_{i},\beta^{-\frac14})\,\, \text{for all } i\in \llbracket 1,\kappa\rrbracket \Bigg) \nonumber\\
    &\qquad\qquad\quad \, \mathbb{P}\left( x_{i}^t\in B(v_{i},\beta^{-\frac14}) \text{ for all } i\in \llbracket 1,\kappa\rrbracket \right) \nonumber\\
    &\qquad\qquad\geq \left(\widetilde{p}^{\left\lfloor\frac{\beta^{-\frac14}}{\gamma\,\mathsf{d}(\mathcal{K})}\right\rfloor} 2^t\right)
    \left(1 - \kappa p^{\left\lfloor\frac{\beta^{-\frac14}}{\gamma\,\mathsf{d}(\mathcal{K})}\right\rfloor} 2^t\right).
\end{align}

\subsection*{Step 4. Reaching a ball centered at the vertex}
    
    Here we look to estimate
    \begin{equation*}
        \mathbb{P}\left( x_j^{T_1}\in B\left(v_{\upsigma(j)},C\uptau\right)\setminus B\left(v_{\upsigma(j)},\uptau\right) \, \bigg| \, \Omega \right)
    \end{equation*}
    for some numerical $C>1$ to be determined later, where
    \begin{align*}
        \Omega\coloneqq\Bigg\{& x_i^t\in B\left(v_i,\beta^{-\frac14}\right)\;\; \forall i\in\llbracket 1, \kappa\rrbracket, \\ 
        &x_j^t\in \mathscr{I}_{\upsigma(j)}(\beta^{-\frac18})\;\; \forall j\in\llbracket \kappa+1,n\rrbracket,\; \forall t\in \llbracket 0, T_1\rrbracket\Bigg\}.
    \end{align*}
    To simplify notation in this step, all expectations, variances, and probabilities are conditioned on $\Omega$ without explicit mention. However, we emphasize that the probabilities can be estimated directly using \eqref{eq: bound.prob.ball} and \eqref{eq: bound.prob.cell}.

    \subsubsection*{Step 4.1. Expectation of the contraction}

    By \Cref{lem: cone.condition}, \Cref{lem: bound.pii}, and \Cref{lem: bound.pjphi(j)},
\begin{equation} \label{eq: exp.lower}
    \hspace{-0.75cm} d_{ij}^{t+1} \geq (1-\gamma)d_{ij}^t \hspace{0.5cm} \text{with probability } 1,
\end{equation}
while
\begin{equation} \label{eq: upper.bound.expectation.1}
    d_{ij}^{t+1} \leq (1-\gamma)(d_{ij}^t - \beta^{-\frac14}) \hspace{1cm} \text{with probability at least } 1-p,
\end{equation}
and
\begin{equation} \label{eq: upper.bound.expectation.2}
    \hspace{-0.3cm} d_{ij}^{t+1} \leq d_{ij}^t + \gamma\,\mathsf{d}(\mathcal{K}) \hspace{1cm} \text{with probability at most } p,
\end{equation}
where $p = p(\beta)$ is as in \eqref{eq: bound.p}.

Indeed, \eqref{eq: upper.bound.expectation.2} follows from the bound $\|x_j^t\| \leq \mathsf{d}(\mathcal{K})$. 

To justify \eqref{eq: upper.bound.expectation.1}, fix $x \in \mathcal{K} \setminus B(v_i, \beta^{-\frac14})$. Then
\begin{align*}
    \min_{v \in B(v_i, \beta^{-\frac14})} \|(1-\gamma)x + \gamma v - v_i\|
    &= \min_{v \in B(v_i, \beta^{-\frac14})} \|(1-\gamma)(x - v_i) + \gamma(v - v_i)\| \\
    &= \min_{w \in B(0, \beta^{-\frac14})} \|(1-\gamma)z + \gamma w\|.
\end{align*}
Since $z \notin B(0, \beta^{-\frac14})$, the minimum is attained at $w = \beta^{-\frac14} \frac{z}{\|z\|}$, yielding
\begin{equation*}
    \left\|(1-\gamma)z + \gamma\beta^{-\frac14} \frac{z}{\|z\|}\right\| 
    = \frac{1}{\|z\|} \| ((1-\gamma)\|z\| + \gamma \beta^{-\frac14}) z \|
    = (1-\gamma)\|z\| + \gamma \beta^{-\frac14}.
\end{equation*}
Reversing the change of variables, we obtain
\begin{equation*}
    d_{ij}^{t+1} \leq (1-\gamma)d_{ij}^t + \gamma \beta^{-\frac14} \hspace{1cm} \text{with probability at least } 1-p.
\end{equation*}
Combining \eqref{eq: exp.lower}, \eqref{eq: upper.bound.expectation.1}, and \eqref{eq: upper.bound.expectation.2}, we find that
\begin{align} \label{eq: expectation.bound}
    (1-\gamma)^t d_{ij}^0 &\leq \mathbb{E}\left[d_{ij}^t\right]\nonumber\\
    &\leq 
    ((1-p)(1-\gamma) + p)^t d_{ij}^0 \nonumber \\
    &+\sum_{s=0}^t ((1-p)(1-\gamma) + p)^s \left( p \gamma\mathsf{d}(\mathcal{K}) - (1-p)(1-\gamma)\beta^{-\frac14} \right).
\end{align}

    \subsubsection*{Step 4.2. Bounding the variance}

We want to bound the variance of $d_{ij}^{t+1}$. To do so, we express the variance of $d_{ij}^{t+1}$ in terms of the variance of $d_{ij}^{t}$ using the law of total variance:
\begin{equation*}
    \mathrm{Var}\left( d_{ij}^{t+1} \right)
    = \mathbb{E}\left[ \mathrm{Var}\left( d_{ij}^{t+1} \mid d_{ij}^t \right) \right]
    + \mathrm{Var}\left( \mathbb{E}\left[ d_{ij}^{t+1} \mid d_{ij}^t \right] \right).
\end{equation*}
We first address the first term by noting that
\begin{equation*}
    \mathrm{Var}\left( d_{ij}^{t+1} \mid d_{ij}^t \right)
    = \mathbb{E}\left[ \left( d_{ij}^{t+1} \right)^2 \mid d_{ij}^t \right]
    - \mathbb{E}\left[ d_{ij}^{t+1} \mid d_{ij}^t \right]^2.
\end{equation*}
Letting $p = p(\beta)$ be as in \eqref{eq: bound.p}, we have
\begin{align*}
    \mathbb{E}\left[ \left( d_{ij}^{t+1} \right)^2 \mid d_{ij}^t \right]
    &\leq \left[ (1-p)(1-\gamma)^2 + p \right] \left( d_{ij}^t \right)^2 \\
    &+\Big[ 2(1-\gamma)\gamma(1-p) + 2p \gamma\mathsf{d}(\mathcal{K}) \Big] d_{ij}^t \\ 
    &+ \left[ (1-p) \gamma^2 \beta^{-\frac12} + p \gamma^2 \mathsf{d}(\mathcal{K})^2 \right].
\end{align*}
On the other hand, by the same arguments as in \Cref{lem: cone.condition}, we have
\begin{equation*}
    \mathbb{E}\left[ d_{ij}^{t+1} \mid d_{ij}^t \right] \geq (1-\gamma) d_{ij}^t.
\end{equation*}
Hence,
\begin{align} \label{eq: variance.conditioned}
    \mathrm{Var}\left( d_{ij}^{t+1} \mid d_{ij}^t \right)
    &\leq p\gamma(2 - \gamma)\left( d_{ij}^t \right)^2 \nonumber\\
    &+ 2\gamma\left[ (1-\gamma)\beta^{-\frac14} (1-p) + p \mathsf{d}(\mathcal{K}) \right] d_{ij}^t \nonumber \\
    &+  (1-p) \gamma^2 \beta^{-\frac12} + p\gamma^2\mathsf{d}(\mathcal{K})^2.
\end{align}
Therefore,
\begin{align} \label{eq: variance.first.term.bound}
    \mathbb{E}\left[ \mathrm{Var}\left( d_{ij}^{t+1} \mid d_{ij}^t \right) \right]
    &\leq p \gamma (2 - \gamma) \mathbb{E}\left[ \left( d_{ij}^t \right)^2 \right]\nonumber
    \\
    &+ 2 \gamma \left[ (1-\gamma)\beta^{-\frac14} (1-p) + p \mathsf{d}(\mathcal{K}) \right] \mathbb{E}\left[ d_{ij}^t \right] \nonumber \\
    &+ (1-p) \gamma^2 \beta^{-\frac12} + p\gamma^2 \mathsf{d}({\mathcal{K}})^2.
\end{align}
We now address the second term:
\begin{equation*}
    \mathrm{Var}\left( \underbrace{ \mathbb{E}\left[ d_{ij}^{t+1} \mid d_{ij}^t \right] }_{ \coloneqq \varphi } \right)
    = \mathbb{E}\left[ \varphi^2 \right]
    - \mathbb{E}\left[ \varphi \right]^2.
\end{equation*}
Proceeding similarly, we find
\begin{align*}
    &\mathbb{E}\left[ \varphi^2 \right]\\
    &\leq \left[ p^2 + (1-p)^2 (1-\gamma)^2 + 2p(1-p)(1-\gamma) \right] \mathbb{E}\left[ \left( d_{ij}^t \right)^2 \right] \\
    &+ \left[ 2p^2 \gamma\mathsf{d}(\mathcal{K}) + 2(1-p)^2 (1-\gamma) \gamma \beta^{-\frac14} + 2p(1-p)(\gamma\beta^{-\frac14} + \gamma\mathsf{d}(\mathcal{K})) \right] \mathbb{E}\left[ d_{ij}^t \right] \\
    &+ \left[p^2\gamma\mathsf{d}({\mathcal{K}})^2 + 2p(1-p)\gamma^2 \beta^{-\frac14}\mathsf{d}(\mathcal{K}) + (1-p)^2 \gamma^2 \beta^{-\frac12} \right],
\end{align*}
while
\begin{equation*}
    \mathbb{E}\left[ \varphi \right]^2
    \geq (1-\gamma)^2\mathbb{E}\left[ d_{ij}^t \right]^2.
\end{equation*}
Thus, 
\begin{align}\label{eq: second.variance}
    &\mathrm{Var}\left[ \varphi \right]\nonumber\\
    &\leq \left[ p^2 + (1-p)^2 (1-\gamma)^2 + 2p(1-p)(1-\gamma) \right] \mathbb{E}\left[ \left( d_{ij}^t \right)^2 \right] \nonumber \\
    &- (1-\gamma)^2 \mathbb{E}\left[ d_{ij}^t \right]^2 \nonumber \\
    & + \left[ 2p^2 \gamma\,\mathsf{d}(\mathcal{K}) + 2(1-p)^2 (1-\gamma)\gamma \beta^{-\frac14} + 2p(1-p)(\gamma\beta^{-\frac14} + \gamma\,\mathsf{d}(\mathcal{K})) \right] \mathbb{E}\left[ d_{ij}^t \right] \nonumber \\
    & + \left[ p^2 \gamma\,\mathsf{d}(\mathcal{K})^2 + 2p(1-p) \gamma^2 \beta^{-\frac14} \mathsf{d}(\mathcal{K}) + (1-p)^2 \gamma^2 \beta^{-\frac12} \right].
\end{align}
Combining \eqref{eq: variance.first.term.bound} and \eqref{eq: second.variance}, and using the inequality
\[
    a \mathbb{E}\left[ x^2 \right] - b  \mathbb{E}[x]^2
    \leq b \, \mathrm{Var}[x] + (a-b) \mathbb{E}\left[ x^2 \right],
\]
along with the bounds $(1-\gamma) \leq 1$, $(1-p) \leq 1$, and $\mathbb{E}\left[ d_{ij}^t \right] \leq \mathsf{d}(\mathcal{K})$, we obtain
\begin{align*}
    &\mathrm{Var}\left[ d_{ij}^{t+1} \right] \\
    &\leq (1-\gamma)^2 \, \mathrm{Var}\left[ d_{ij}^t \right] \\
    &\quad+ p \left[ p + (p - 2)(1-\gamma)^2 + 2(1-p)(1-\gamma) + \gamma(2-\gamma) \right] \mathbb{E}\left[ d_{ij}^t \right]^2 \\
    &\quad+ \beta^{-\frac14} \left[ 3\gamma\,\mathsf{d}(\mathcal{K}) + \gamma^2 + 2\gamma\beta^{-\frac14} + 2\gamma^2 \beta^{-\frac14} \right] \\
    &\quad+ p \Big[ \mathsf{d}(\mathcal{K})^2 + 2\gamma\,\mathsf{d}(\mathcal{K})^2 + 2p \gamma\,\mathsf{d}(\mathcal{K})^2 + 2\gamma \beta^{-\frac14} \mathsf{d}(\mathcal{K}) + 2\gamma\,\mathsf{d}(\mathcal{K})^2 \\
    &\quad+ p \gamma\,\mathsf{d}(\mathcal{K}) + 2\gamma^2 \beta^{-\frac14} \mathsf{d}(\mathcal{K}) + \gamma^2 \mathsf{d}(\mathcal{K})^2 \Big] \\
    &\leq (1-\gamma)^2 \, \mathrm{Var}\left[ d_{ij}^t \right] + C_0 p + C_1 \beta^{-\frac14},
\end{align*}
where $C_0$ and $C_1$ depend on $h, \mathsf{d}(\mathcal{K}), \beta$, and $p$ as in the inequality above. Furthermore, $C_0$ and $C_1$ remain uniformly bounded as $\beta \to +\infty$, and $\gamma \to 0$.
All in all, 
\begin{equation*}
    \mathrm{Var}\left[ d_{ij}^{t+1} \right]
    \leq (1-\gamma)^{2t} \mathrm{Var}\left[ d_{ij}^1 \right]
    + \sum_{s=0}^t (1-\gamma)^{2s} \left( C_0 p + C_1 \beta^{-\frac14} \right).
\end{equation*}

    \subsubsection*{Step 4.3. Concentration}
            
By Chebyshev's inequality, we have
\begin{align}\label{eq: bound.prob.chebyshev}
    &\mathbb{P}\left( \left| d_{ij}^t - \mathbb{E}\left[ d_{ij}^t \right] \right| \geq \varepsilon \right)\nonumber\\
    &\quad\leq \frac{ \mathrm{Var}\left[ d_{ij}^t \right] }{ \varepsilon^2 } \nonumber \\
    &\quad\leq \frac{ (1-\gamma)^{2t} \mathrm{Var}\left[ d_{ij}^1 \right] + \displaystyle\sum_{s=0}^t (1-\gamma)^{2s} \left( C_0 p + C_1 \beta^{-\frac14} \right) }{ \varepsilon^2 } \nonumber \\
    &\quad\leq \frac{ \mathrm{Var}\left[ d_{ij}^1 \right] + t \left( C_0 p + C_1 \beta^{-\frac14} \right) }{ \varepsilon^2 },
\end{align}
and
\begin{align*}
    &\mathrm{Var}\left[ d_{ij}^1 \right] \\
    &\leq p \gamma (2 - \gamma) \mathsf{d}(\mathcal{K})^2
    + \left( 2\gamma\,\mathsf{d}(\mathcal{K}) + \gamma^2 \beta^{-\frac14} \right) \beta^{-\frac14}
    + \left( 2\gamma\,\mathsf{d}(\mathcal{K})^2 + \gamma^2 \mathsf{d}(\mathcal{K})^2 \right) p,
\end{align*}
where the inequality follows similarly to \eqref{eq: variance.conditioned}, replacing\footnote{Note that in the law of total variance, the second term is absent since $d_{ij}^0$ is fixed. Therefore, $\mathbb{E}\left[ d_{ij}^1 \mid d_{ij}^0 \right]$ is not a random variable.} $d_{ij}^t$ by $d_{ij}^0$.
From \eqref{eq: expectation.bound}, we obtain
\begin{align*}
    (1-\gamma)^t d_{ij}^0
    \leq \mathbb{E}\left[ d_{ij}^t \right]
    \leq (1 + \gamma p)^t d_{ij}^0
    + t \left[ p \gamma\,\mathsf{d}(\mathcal{K}) + (1-p)(1-\gamma) \beta^{-\frac14} \right].
\end{align*}
Evaluating at
\[
    t = T_1 = \left\lfloor\frac{ 1 }{ \log(1-\gamma) }\log\left( \frac{\uptau}{\min_{\ell\in\llbracket \kappa+1, n\rrbracket} d_{\ell\upsigma(\ell)}^0} \right)\right\rfloor,
\]
 we obtain, for $\beta$ large enough,
\begin{align*} 
    \frac{ \uptau \, d_{ij}^0}{ \min_{\ell\in\llbracket \kappa+1, n\rrbracket} d_{\ell\upsigma(\ell)}^0} 
    \leq \mathbb{E}\left[ d_{ij}^t \right] 
    &\leq \left( \frac{ \uptau }{\min_{\ell\in\llbracket \kappa+1, n\rrbracket} d_{\ell\upsigma(\ell)}^0 } \right)^{ 1 + O(p) } d_{ij}^0 \\
    &\qquad + T_1 \left( p \gamma\,\mathsf{d}(\mathcal{K}) + \beta^{-\frac14} \right ) \\
    &\leq (1 + O(p)) \frac{ \uptau \, d_{ij}^0}{ \min_{\ell\in\llbracket \kappa+1, n\rrbracket} d_{\ell\upsigma(\ell)}^0 } \\
    &\qquad    + T_1 \left( p \gamma\,\mathsf{d}(\mathcal{K}) + \beta^{-\frac14} \right ).
\end{align*}
Since \eqref{eq: hypothesis.voronoi} holds, we know by \eqref{prop: voronoi} that the cells are Voronoi cells. Therefore, the minimum in \eqref{eq: a.priori.time} is attained for some pair $(\ell, \upsigma(\ell))$. Fix
$\ell \in \llbracket \kappa+1, n\rrbracket$
to be the minimizer of \eqref{eq: a.priori.time}. 
Let 
\begin{equation*}
    C\coloneqq \max_{i\in\llbracket 1, n\rrbracket}\max_{j\in\llbracket \kappa+1,n\rrbracket}\frac{d_{ij}^0}{\min_{\ell\in\llbracket \kappa+1, n\rrbracket} d_{\ell\upsigma(\ell)}^0}.
\end{equation*}

    \subsubsection*{Step 4.4. Conclusion}

Gathering \eqref{eq: bound.prob.ball}, \eqref{eq: bound.prob.cell}, and \eqref{eq: bound.prob.chebyshev}, we obtain
\begin{equation*}\label{eq: interval}
    d_{ij}^{T_1} \in 
    \left[
        \frac{ \uptau \, d_{ij}^0 }{ \min_{\ell\in\llbracket \kappa+1, n\rrbracket} d_{\ell\upsigma(\ell)}^0} ,
        \frac{ \uptau \, d_{ij}^0}{ \min_{\ell\in\llbracket \kappa+1, n\rrbracket} d_{\ell\upsigma(\ell)}^0} 
        + O\left( T_1 p \right)
        + O( T_1 \beta^{-\frac14})
        + \varepsilon
    \right]
\end{equation*}
for all $j \in \llbracket \kappa+1, n\rrbracket,$ with probability at least
\begin{align*}
    &\mathbb{P}\left(
        \bigcap_{j\in\llbracket \kappa+1,n\rrbracket} 
        \left\{ x_j^{T_1} \in B\left( v_{\upsigma(j)}, C \uptau \right)
        \setminus B\left( v_{\upsigma(j)}, \uptau\right)\right\} 
        \
    \right)\\
    &\geq\mathbb{P}\left(
        {\bigcap_{j\in\llbracket \kappa+1,n\rrbracket}} 
        \left\{x_j^{T_1} \in B\left( v_{\upsigma(j)}, C \uptau \right)
        \setminus B\left( v_{\upsigma(j)}, \uptau \right)\right\}
        \,\Big|\, \Omega
    \right)
    \mathbb{P}(\Omega)\\
    &{=\mathbb{P}\left(
        {\bigcap_{j\in\llbracket \kappa+1,n\rrbracket}} 
        \left\{x_j^{T_1} \in B\left( v_{\upsigma(j)}, C \uptau \right)
        \setminus B\left( v_{\upsigma(j)}, \uptau \right)\right\}
        \cap  \Omega.
    \right)}
\end{align*}
{Note that the event in the last expression is precisely the event in the statement of the theorem.} Now,
\begin{align*}
    &\mathbb{P}\left(
        {\bigcap_{j\in\llbracket \kappa+1,n\rrbracket}}\left\{ 
        x_j^{T_1} \in B\left( v_{\upsigma(j)}, C \uptau \right)
        \setminus B\left( v_{\upsigma(j)}, \uptau \right)\right\}
        \,\Big|\, \Omega
    \right)
    \mathbb{P}(\Omega) \nonumber \\
    &=\left(1-\mathbb{P}\left(
        \bigcup_{j\in\llbracket\kappa+1, n\rrbracket}        \left\{ x_j^{T_1} \in B\left( v_{\upsigma(j)}, C \uptau \right)
        \setminus B\left( v_{\upsigma(j)}, \uptau \right)\right\}
        \,\Big|\, \Omega
    \right)\right)
    \mathbb{P}(\Omega) \nonumber\\
    &\geq\left(1-(n-\kappa)\max_{j\in \llbracket \kappa+1,n\rrbracket }\mathbb{P}\left(
        x_j^{T_1} \in B\left( v_{\upsigma(j)}, C \uptau \right)
        \setminus B\left( v_{\upsigma(j)}, \uptau \right)
        \,\Big|\, \Omega
    \right)\right)
    \mathbb{P}(\Omega). \nonumber\\
    \end{align*}
    Using \eqref{eq: bound.prob.chebyshev} with $t=T_1$,  we obtain a lower bound for 
    $$\mathbb{P}\left(
        x_j^{T_1} \in B\left( v_{\upsigma(j)}, C \uptau \right)
        \setminus B\left( v_{\upsigma(j)}, \uptau \right)
        \,\Big|\, \Omega
    \right).
    $$ 
    At the same time, we can obtain a lower bound for $\mathbb{P}(\Omega)$ by \eqref{eq: bound.prob.cell} and \eqref{eq: bound.prob.ball}, ending up with
    \begin{align} \label{eq: final.low.bound}
    &\mathbb{P}\left(
        {\bigcap_{j\in\llbracket \kappa+1,n\rrbracket}} 
        \left\{
        x_j^{T_1} \in B\left( v_{\upsigma(j)}, C \uptau \right)
        \setminus B\left( v_{\upsigma(j)}, \uptau \right)\right\}
        \,\Big|\, \Omega
    \right)
    \mathbb{P}(\Omega) \nonumber \\
    &\geq \left(
        1 - {(n-\kappa) }\frac{ O\left( T_1 p \right) + O( T_1 \beta^{-\frac14}) }{ \varepsilon^2 }
    \right)\mathbb{P}(\Omega)\nonumber\\
    &\geq \left(
        1 - {(n-\kappa) }\frac{ O\left( T_1 p \right) + O( T_1 \beta^{-\frac14}) }{ \varepsilon^2 }\right) \left( 1 - \widetilde{p}^{ \frac{ \beta^{-\frac14} }{ \gamma\,\mathsf{d}(\mathcal{K}) } } 2^{T_1} \right)^{ n - \kappa }
        \left( 1 - \kappa p^{ \frac{ \beta^{-\frac14} }{ \gamma\,\mathsf{d}(\mathcal{K}) } } 2^{T_1} \right).
\end{align}
For $\beta$ large enough, expanding $\mathbb{P}(\Omega)$, we obtain
\begin{align*} \label{eq: final.up.bound}
    1 - \mathbb{P}(\Omega)
    &\leq 1-\left( 1 - \widetilde{p}^{ \frac{ \beta^{-\frac14} }{ \gamma\,\mathsf{d}(\mathcal{K}) } } 2^{T_1} \right)^{ n - \kappa }\left( 1 - \kappa p^{ \frac{ \beta^{-\frac14} }{ \gamma\,\mathsf{d}(\mathcal{K}) } } 2^{T_1} \right)\nonumber\\
    &\lesssim 1-\left( 1 - (n-\kappa)\widetilde{p}^{ \frac{ \beta^{-\frac14} }{ \gamma\,\mathsf{d}(\mathcal{K}) } } 2^{T_1} \right)
        \left( 1 - \kappa p^{ \frac{ \beta^{-\frac14} }{ \gamma\,\mathsf{d}(\mathcal{K}) } } 2^{T_1} \right)\nonumber\\
    &\lesssim (n - \kappa) \exp\left( -\beta^{\frac78}\frac{\uptau}{2}\frac{ \beta^{-\frac14} }{ \gamma\mathsf{d}(\mathcal{K}) } + \log(2) T_1 \right) \nonumber
    \\ 
    &\quad + \kappa  \exp\left( -\beta\frac{\uptau}{2} \frac{ \beta^{-\frac14} }{ \gamma\mathsf{d}(\mathcal{K}) } + \log(2) T_1 \right) \nonumber \\
    &\lesssim n\wedge (n-\kappa) \exp\left( -\beta^{\frac58}\frac{ \uptau }{ 2\gamma\mathsf{d}(\mathcal{K}) } + O(T_1) \right)\nonumber\\
    &= \exp\left( -\beta^{\frac58}\frac{ \uptau }{2\gamma\mathsf{d}(\mathcal{K}) } + O(T_1) + 2 \log(n\wedge (n-\kappa))-O(1)\right).
\end{align*}
All in all,
\begin{align*}
   &\hspace{-1cm}\mathbb{P}\left(
        \bigcap_{j\in\llbracket\kappa+1,n\rrbracket}\left\{ 
        x_j^{T_1} \in B\left( v_{\upsigma(j)}, C \uptau \right)
        \setminus B\left( v_{\upsigma(j)}, \uptau \right)\right\}
    \right)\\
    \geq 
         1 &- (n-\kappa) \frac{ O\left( T_1 p \right) + O( T_1 \beta^{-\frac14}) }{ \varepsilon^2} \\
    &-\exp\left( -\beta^{\frac58}\frac{ \uptau }{2\gamma\mathsf{d}(\mathcal{K}) } + O(T_1) + 2 \log(n\wedge (n-\kappa))-O(1)\right).
\end{align*}
Choosing $\varepsilon=\beta^{-\frac{1}{32}}$, we can take $\beta$ large enough to obtain the bound in the statement of the theorem. This concludes the proof. \qed

\begin{remark}\label{rem: Chernoff}
We can improve \eqref{eq: bound.prob.ball} by applying a Chernoff inequality for the binomial distribution. Fix 
\[ \alpha \coloneqq \left\lfloor \frac{\beta^{-\frac14}}{\gamma\mathsf{d}(\mathcal{K})} + 1 \right\rfloor 
\] 
and \( \lambda > 0 \). Then,
\begin{equation*}
    \mathbb{P}\left( \mathrm{Bin}(t, p) \geq \alpha \right)
    = \mathbb{P}\left( e^{\lambda \, \mathrm{Bin}(t, p)} \geq e^{\lambda \alpha} \right).
\end{equation*}
By Markov's inequality, we obtain
\begin{equation*}
    \mathbb{P}\left( \mathrm{Bin}(t, p) \geq \alpha \right)
    \leq e^{-\lambda \alpha} \, \mathbb{E}\left[ e^{\lambda \, \mathrm{Bin}(t, p)} \right].
\end{equation*}
At the same time, since the binomial is the sum of \( t \) independent Bernoulli random variables, denoted \( X_s \sim \mathrm{Bern}(p) \) for \( s = 1, \dots, t \), we compute:
\begin{align*}
    \mathbb{E}\left[ e^{\lambda \, \mathrm{Bin}(t, p)} \right]
    &= \mathbb{E}\left[ e^{\lambda \sum_{s=1}^t X_s} \right]
    = \mathbb{E}\left[ \prod_{s=1}^t e^{\lambda X_s} \right]
    = \left( \mathbb{E}\left[ e^{\lambda \, \mathrm{Bern}(p)} \right] \right)^t \\
    &= \left( 1 - p + p e^{\lambda} \right)^t,
\end{align*}
where we used independence in the last equality of the first line.

We choose \( \lambda = \log\left( \alpha(tp)^{-1} \right) \), assuming \( \alpha > tp \), so that \( \lambda > 0 \). This leads to
\begin{equation}\label{eq: shanov.entropy.ineq}
    \mathbb{P}\left( \mathrm{Bin}(t, p) \geq \alpha \right)
    \leq e^{ -\log\left( \frac{\alpha}{tp} \right) \alpha } \left( 1 - p + \frac{\alpha}{t} \right)^t
    \leq  \left( \frac{tp}{\alpha} \right)^{\alpha} e^{\alpha}.
\end{equation}
Therefore, the final inequality yields
\begin{equation*} \label{eq: bound.prob.ball.alternative}
        \mathbb{P}\left( x_i^t \in B( v_i, \beta^{-\frac14}) \right)
        \geq 1 - \left( \frac{e t p}{\left\lfloor \frac{\beta^{-\frac14}}{\gamma\,\mathsf{d}(\mathcal{K})} + 1 \right\rfloor} \right)^{\left\lfloor \frac{\beta^{-\frac14}}{\gamma\,\mathsf{d}(\mathcal{K})} + 1 \right\rfloor}
\end{equation*}
for $(t,i) \in \llbracket 0, T_1 \rrbracket\times \llbracket 1, n \rrbracket$,
whenever $\left\lfloor \beta^{-\frac14}(\gamma\mathsf{d}(\mathcal{K}))^{-1}\right\rfloor + 1> pt$.
\end{remark}

\subsection{Proof of \Cref{lem: metastab.1}} \label{sec: proof.lem.metastab.1}

We henceforth denote
\begin{equation*}
    \mathfrak{B}_\ell(\varepsilon) \coloneqq \conv\left\{x_{i\ell}^0\right\}_{i\in\llbracket1,\mu_\ell\rrbracket}+B(0,\varepsilon).
\end{equation*}
We begin with the following crucial fact.

\begin{claim} \label{cl: group.cluster.bound}
Conditioning on the event
    \[
\left\{  x_{i\ell}^t \in \mathfrak{B}_{\ell}(\varepsilon) \text{ for all } (\ell,i)\in\llbracket 1,\kappa\rrbracket\times\llbracket 1,\mu_\ell\rrbracket \right\},
\]
where, $\mu_\ell\in\llbracket 1, n\rrbracket$ denotes the number of points in the cell $\mathscr{C}_\ell(v)$, fix \(\ell \in \llbracket 1, \kappa \rrbracket\) and \(i \in \llbracket 1, \mu_\ell \rrbracket\). We have
\begin{align*}
p_{(i\ell)\to\ell}^t
\coloneqq \sum_{j=1}^{\mu_\ell} \frac{e^{\beta \langle x_{i\ell}^t, x_{j\ell}^t \rangle}}{\displaystyle \sum_{m=1}^\kappa \sum_{k=1}^{\mu_m} e^{\beta \langle x_{i\ell}^t, x_{km}^t\rangle}} \geq 1 - \left(n - \mu_\ell\right){\mu_\ell} e^{-\lambda\beta},
\end{align*}
where $\lambda=c_0-4\mathsf{d}(\mathcal{K})(\varepsilon+\uptau)-2(\varepsilon+\uptau)^2$.
\end{claim}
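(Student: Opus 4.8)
The plan is to show that, on the conditioning event, the exponent $\langle x_{i\ell}^t, x_{j\ell}^t\rangle$ attached to an in-cluster target exceeds the exponent $\langle x_{i\ell}^t, x_{km}^t\rangle$ of every out-of-cluster target ($m\neq\ell$) by a uniform gap of order $c_0$, and then to read off the softmax estimate. First I would localize: since each $x_{j\ell}^0$ lies in the ball $B(v_\ell,\uptau)$ and that ball is convex, $\conv\{x_{j\ell}^0\}_{j\in\llbracket 1,\mu_\ell\rrbracket}\subseteq B(v_\ell,\uptau)$, hence $\mathfrak{B}_\ell(\varepsilon)\subseteq B(v_\ell,\uptau+\varepsilon)$. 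Thus, conditioning on $\{x_{i\ell}^t\in\mathfrak{B}_\ell(\varepsilon)\text{ for all }(\ell,i)\}$, every particle may be written as $x_{km}^t=v_m+w_{km}^t$ with $\|w_{km}^t\|\le\uptau+\varepsilon=:r$, for all $m\in\llbracket 1,\kappa\rrbracket$ and $k\in\llbracket 1,\mu_m\rrbracket$.

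Next I would expand the bilinear form: for any $m$ and $k$,
\[
\langle x_{i\ell}^t, x_{km}^t\rangle=\langle v_\ell,v_m\rangle+\langle v_\ell,w_{km}^t\rangle+\langle w_{i\ell}^t,v_m\rangle+\langle w_{i\ell}^t,w_{km}^t\rangle ,
\]
and, bounding $\|v_\ell\|,\|v_m\|\le\mathsf{d}(\mathcal{K})$ and using Cauchy--Schwarz, the last three terms have absolute value at most $2\mathsf{d}(\mathcal{K})r+r^2$. Taking $m=\ell$ and $\langle v_\ell,v_\ell\rangle=\|v_\ell\|^2$ yields $\langle x_{i\ell}^t, x_{j\ell}^t\rangle\ge\|v_\ell\|^2-2\mathsf{d}(\mathcal{K})r-r^2$ for every $j$, whereas taking $m\neq\ell$ and invoking \eqref{eq: bound.max}, i.e. $\langle v_\ell,v_m\rangle\le\|v_\ell\|^2-c_0$, yields $\langle x_{i\ell}^t, x_{km}^t\rangle\le\|v_\ell\|^2-c_0+2\mathsf{d}(\mathcal{K})r+r^2$ for every $k$. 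Subtracting, for all $j\in\llbracket 1,\mu_\ell\rrbracket$, $m\neq\ell$ and $k\in\llbracket 1,\mu_m\rrbracket$,
\[
\langle x_{i\ell}^t, x_{km}^t\rangle\ \le\ \langle x_{i\ell}^t, x_{j\ell}^t\rangle-\lambda ,\qquad \lambda=c_0-4\mathsf{d}(\mathcal{K})r-2r^2 ,
\]
which is exactly the constant in the statement.

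Finally I would estimate the complementary weight. Retaining a single in-cluster term in the denominator,
\[
1-p_{(i\ell)\to\ell}^t=\frac{\sum_{m\neq\ell}\sum_{k=1}^{\mu_m}e^{\beta\langle x_{i\ell}^t, x_{km}^t\rangle}}{\sum_{m=1}^{\kappa}\sum_{k=1}^{\mu_m}e^{\beta\langle x_{i\ell}^t, x_{km}^t\rangle}}\ \le\ \frac{\sum_{m\neq\ell}\sum_{k=1}^{\mu_m}e^{\beta\langle x_{i\ell}^t, x_{km}^t\rangle}}{e^{\beta\langle x_{i\ell}^t, x_{1\ell}^t\rangle}} ,
\]
and applying the exponent gap to each of the $n-\mu_\ell$ numerator terms bounds the right-hand side by $(n-\mu_\ell)e^{-\beta\lambda}\le(n-\mu_\ell)\mu_\ell e^{-\beta\lambda}$, since $\mu_\ell\ge 1$; this is the asserted inequality (keeping all $\mu_\ell$ in-cluster terms in the denominator would even give the sharper $\frac{n-\mu_\ell}{\mu_\ell}e^{-\beta\lambda}$). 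I do not expect a genuine obstacle here: the only delicate points are the bookkeeping of the perturbation errors so that the gap is precisely $\lambda$, and the observation that the bound is informative only when $\lambda>0$, i.e. once $\uptau+\varepsilon$ is small relative to $c_0/\mathsf{d}(\mathcal{K})$ --- which is ensured by the constraint \eqref{eq: tau} on $\uptau$ and the smallness of $\varepsilon$ afforded by the choice of $\varepsilon_*$ in \Cref{lem: metastab.1}.
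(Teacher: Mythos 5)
Your proof is correct and takes essentially the same route as the paper's: both identify the uniform exponent gap $\lambda=c_0-4\mathsf{d}(\mathcal{K})(\varepsilon+\uptau)-2(\varepsilon+\uptau)^2$ between in-cluster and out-of-cluster inner products and then read off the softmax estimate. Your presentation---bounding the complementary weight directly by retaining a single in-cluster term in the denominator---is a touch cleaner than the paper's multiplicative normalization by $e^{-\beta K}/e^{-\beta K}$, and, as you note, it even yields the slightly sharper constant $(n-\mu_\ell)$ in place of $(n-\mu_\ell)\mu_\ell$.
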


\begin{proof}[Proof of \Cref{cl: group.cluster.bound}] 
First, note that 
\begin{equation*}
    \langle x_{i\ell}^t, x_{j\ell}^t \rangle \geq \langle v_{\ell}, v_\ell\rangle - 2\mathsf{d}(\mathcal{K})(\varepsilon + \uptau) - (\varepsilon + \uptau)^2.
\end{equation*}
Hence,
\begin{align} \label{eq: star}
    \frac{e^{ -\beta \left( \langle v_{\ell}, v_\ell\rangle - 2\mathsf{d}(\mathcal{K})(\varepsilon + \uptau) - (\varepsilon + \uptau)^2 \right) }}{e^{ -\beta \left( \langle v_{\ell}, v_\ell\rangle - 2\mathsf{d}(\mathcal{K})(\varepsilon + \uptau) - (\varepsilon + \uptau)^2 \right) }}
    &\sum_{j=1}^{\mu_\ell} \frac{e^{\beta \langle x_{i\ell}^t, x_{j\ell}^t \rangle}}{\displaystyle\sum_{m=1}^{\kappa} \sum_{k=1}^{\mu_m} e^{\beta \langle x_{i\ell}^t, x_{km}^t \rangle}} \nonumber \\
    &= \sum_{j=1}^{\mu_\ell} \frac{e^{\beta \langle x_{i\ell}^t, x_{j\ell}^t \rangle - \beta \left( \langle v_{\ell}, v_\ell\rangle - 2\mathsf{d}(\mathcal{K})(\varepsilon + \uptau) - (\varepsilon + \uptau)^2 \right)}}{\mathscr{Z}}
\end{align}
where
\begin{align*}
    \mathscr{Z} &\coloneqq \sum_{k=1}^{\mu_\ell} e^{\beta \langle x_{i\ell}^t, x_{k\ell}^t \rangle - \beta \left( \langle v_{\ell}, v_\ell\rangle - 2\mathsf{d}(\mathcal{K})(\varepsilon + \uptau) - (\varepsilon + \uptau)^2 \right)} \\
    &+\sum_{\substack{m=1\\ m \neq \ell}}^{\kappa} \sum_{k=1}^{\mu_m} e^{\beta \langle x_{i\ell}^t, x_{km}^t \rangle - \beta \left( \langle v_\ell, v_\ell\rangle - 2\mathsf{d}(\mathcal{K})(\varepsilon + \uptau) - (\varepsilon + \uptau)^2 \right)}.
\end{align*}
Since \( \langle v_i, v_i\rangle \geq \left\langle v_i, v_\ell \right\rangle + c_0 \) with \( c_0 > 0 \) for any \( \ell \neq i \), we have $x_{i\ell}^t = v_{\ell} + y_{i\ell}^t$ with $\|y_{i\ell}^t\| \leq \varepsilon + \uptau$, and similarly $x_{km}^t = v_{m} + y_{km}^t$ with $\|y_{km}^t\| \leq \varepsilon + \uptau$. Hence,
\begin{align*}
    \langle x_{i\ell}^t, x_{km}^t \rangle &= \langle v_{\ell}, v_{m} \rangle + \langle x_{i\ell}^t, v_{m} \rangle + \langle x_{km}^t, v_{\ell} \rangle + \langle y_{i\ell}^t, y_{km}^t \rangle \\
    &\leq \langle v_{\ell}, v_{m} \rangle + 2\mathsf{d}(\mathcal{K})(\varepsilon + \uptau) + (\varepsilon + \uptau)^2 \\
    &\leq \langle v_\ell,v_\ell\rangle + 2\mathsf{d}(\mathcal{K})(\varepsilon + \uptau) + (\varepsilon + \uptau)^2 - c_0.
\end{align*}
Therefore,
\begin{equation*}
    \langle x_{i\ell}^t, x_{km}^t \rangle - \langle v_\ell, v_\ell\rangle - 2\mathsf{d}(\mathcal{K})(\varepsilon + \uptau) - (\varepsilon + \uptau)^2 \leq -c_0, \quad \text{ for } m \neq \ell.
\end{equation*}
Consequently, we can lower bound \eqref{eq: star} by
\begin{align*}
    &\sum_{j=1}^{\mu_\ell} \frac{e^{\beta \langle x_{i\ell}^t, x_{j\ell}^t \rangle - \beta \left( \langle v_\ell,v_\ell\rangle - 2\mathsf{d}(\mathcal{K})(\varepsilon + \uptau) - (\varepsilon + \uptau)^2 \right)}}{\displaystyle \sum_{k=1}^{\mu_\ell} e^{\beta \langle x_{i\ell}^t, x_{k\ell}^t \rangle - \beta \left( \langle v_\ell,v_\ell\rangle - 2\mathsf{d}(\mathcal{K})(\varepsilon + \uptau) - (\varepsilon + \uptau)^2 \right)} + (n - \mu_\ell) e^{-\beta c_0}} \nonumber \\
    &\geq \left(1 + \frac{(n - \mu_\ell) e^{-c_0\beta}}{ \displaystyle \sum_{k=1}^{\mu_\ell} e^{\beta \langle x_{i\ell}^t, x_{k\ell}^t \rangle - \beta \left( \langle v_\ell, v_\ell\rangle - 2\mathsf{d}(\mathcal{K})(\varepsilon + \uptau) - (\varepsilon + \uptau)^2 \right)}}\right)^{-1} \nonumber \\
    &\geq 1 - \frac{(n - \mu_\ell) e^{-c_0\beta}}{ \displaystyle \sum_{k=1}^{\mu_\ell} e^{\beta \langle x_{i\ell}^t, x_{k\ell}^t \rangle - \beta \left( \langle v_\ell, v_\ell\rangle - 2\mathsf{d}(\mathcal{K})(\varepsilon + \uptau) - (\varepsilon + \uptau)^2 \right)}}. 
\end{align*}
We finally bound
\begin{equation*}
    \langle x_{i\ell}^t, x_{k\ell}^t \rangle - \left( \langle v_\ell, v_\ell\rangle - 2\mathsf{d}(\mathcal{K})(\varepsilon + \uptau) - (\varepsilon + \uptau)^2 \right) \geq -4\mathsf{d}(\mathcal{K})(\varepsilon + \uptau) - 2(\varepsilon + \uptau)^2,
\end{equation*}
which yields
\begin{align*}\label{eq: low.bound.2phase}
    p_{(i\ell)\to\ell}^t&\geq 1 -  \frac{(n - \mu_\ell) e^{-c_0 \beta}}{\displaystyle \sum_{k=1}^{\mu_\ell} e^{\beta \langle x_{i\ell}^t, x_{k\ell}^t \rangle - \beta \left( \langle v_\ell, v_\ell\rangle - 2\mathsf{d}(\mathcal{K})(\varepsilon + \uptau) - (\varepsilon + \uptau)^2 \right)}}\nonumber \\
    &\geq 1 - (n - \mu_\ell) \mu_\ell \exp\left(-\beta \left( c_0 - 4\mathsf{d}(\mathcal{K})(\varepsilon + \uptau) - 2(\varepsilon + \uptau)^2 \right) \right).\qedhere
\end{align*}
\end{proof}

Proceeding similarly to Steps 1 and 2 of \Cref{sec: proof.first.phase}, we deduce that, with probability 1,
\begin{equation*}
    T_2 \geq \left\lfloor \frac{\varepsilon}{\gamma\mathsf{d}(\mathcal{K})} \right\rfloor.
\end{equation*}
Fix \( t_0 \coloneqq \left\lfloor \varepsilon (\gamma\mathsf{d}(\mathcal{K}))^{-1} \right\rfloor \). 
Then,
\begin{equation*}
    \mathbb{P}(T_2\geq t_0)\geq\mathbb{P}\left( \max_{\substack{i\in\llbracket1,\mu_\ell\rrbracket\\ \ell\in\llbracket 1,\kappa\rrbracket}} M_{i\ell}^{t_0} \leq \frac{\varepsilon}{\gamma\mathsf{d}(\mathcal{K})} \right) = 1.
\end{equation*}
(Recall the definition of $M_{i\ell}^{t}$ in \eqref{eq: Mit}.)
Fix \( \alpha_0 \coloneqq \left\lfloor \varepsilon(2\gamma\mathsf{d}(\mathcal{K}))^{-1} \right\rfloor\), and recall that \( \varepsilon(\gamma\mathsf{d}(\mathcal{K}))^{-1} \geq 2 \). We estimate
\begin{align} \label{eq: union.bound}
    \mathbb{P}\left( \underbrace{\max_{\substack{i\in\llbracket1,\mu_\ell\rrbracket\\ \ell\in\llbracket 1,\kappa\rrbracket}} M_{i\ell}^{t_0} \leq \alpha_0}_{\coloneqq\Omega_0} \right)
    &= \mathbb{P}\left( \bigcap_{\ell\in\llbracket 1,\kappa\rrbracket}\bigcap_{i\in\llbracket 1,\mu_\ell\rrbracket} \left\{ M_{i\ell}^{t_0} \leq \alpha_0 \right\} \right) \\
    &= 1 - \mathbb{P}\left( \bigcup_{\ell\in\llbracket 1,\kappa\rrbracket}\bigcup_{i\in\llbracket 1,\mu_\ell\rrbracket} \left\{ M_{i\ell}^{t_0} > \alpha_0 \right\} \right) \nonumber \\
    &\geq 1 - \sum_{\ell \in \llbracket 1, k \rrbracket} \sum_{i \in \llbracket 1,\mu_\ell\rrbracket} \left( 1 - \mathbb{P}\left( M_{i\ell}^{t_0} \leq \alpha_0 \right) \right). \nonumber
\end{align}
Note that by the definition of \( t_0 \), we have
\[
\mathbb{P}\left( \bigcap_{\ell\in\llbracket 1,\kappa\rrbracket}\bigcap_{i\in\llbracket 1,\mu_\ell\rrbracket} \left\{ x_{i\ell}^{t_0} \in \mathfrak{B}_\ell(\varepsilon) \right\} \right) = 1.
\]
Therefore, by \Cref{cl: stochastic.dominance}, and arguing as in Step 2 of the proof of \Cref{lem: first.phase}, we obtain (using a Chernoff-type bound \eqref{eq: shanov.entropy.ineq}):
\begin{align*}
    \mathbb{P}\left( M_{i\ell}^{t_0} \leq \alpha_0 \right)
    \geq 1 - \left( \frac{t_0 p}{\alpha_0} \right)^{\alpha_0}
    =: 1 - \xi_0
\end{align*}
where $p$ is the upper bound of $1-p_{(i\ell)\to\ell}^t$ in \Cref{cl: group.cluster.bound}. 
Therefore, going back to \eqref{eq: union.bound}, we find
\begin{equation}\label{eq: binomial.2}
\mathbb{P}\left(\max_{\substack{i\in\llbracket1,\mu_\ell\rrbracket\\ \ell\in\llbracket 1,\kappa\rrbracket}} M_{i\ell}^{t_0} \leq \alpha_0 \right) \geq 1 - n \xi_0 =: \eta_0.
\end{equation}
It follows that
\begin{equation*}
    \mathbb{P}\left( \left\{ \max_{\substack{i\in\llbracket1,\mu_\ell\rrbracket\\ \ell\in\llbracket 1,\kappa\rrbracket}} \dist\left( x_{i\ell}^{t_0}, \partial \mathfrak{B}_\ell(\varepsilon) \right) \leq \varepsilon - \alpha_0\gamma\mathsf{d}(\mathcal{K}) \right\} \right) \geq \eta_0.
\end{equation*}
Now, set \( t_1 \coloneqq \left\lfloor \varepsilon(2\gamma\mathsf{d}(\mathcal{K}))^{-1} \right\rfloor \). Then,
\begin{equation*}
    \mathbb{P}\left( \bigcap_{\ell\in\llbracket 1,\kappa\rrbracket}\bigcap_{i\in\llbracket 1,\mu_\ell\rrbracket} \left\{ x_{i\ell}^{t_0 + t_1} \in \mathfrak{B}_\ell(\varepsilon) \right\} \, \Big|  \,\Omega_0 \right) = 1.
\end{equation*}
Much like what is done for \eqref{eq: binomial.2}, we find
\begin{align*}
    &\mathbb{P}\left(T_2\geq t_0+t_1\, |\, \Omega_0\right)\\ 
    &\geq\mathbb{P}\left( \max_{\substack{i\in\llbracket1,\mu_\ell\rrbracket\\ \ell\in\llbracket 1,\kappa\rrbracket}} M_{i\ell}^{t_0 + t_1} \leq \alpha_0 \,\Bigg|\, \bigcap_{\ell\in\llbracket 1,\kappa\rrbracket}\bigcap_{i\in\llbracket 1,\mu_\ell\rrbracket} \left\{ x_{i\ell}^{t_0 + t_1} \in \mathfrak{B}_\ell(\varepsilon) \right\} \right)\\
    &\geq 1 - n \left( \frac{(t_0 + t_1)p}{\alpha_0} \right)^{\alpha_0} =: 1 - n \xi_1 =: \eta_1.
\end{align*}
Therefore,
\begin{equation*}
    \mathbb{P}\left( \underbrace{ \max_{\substack{i\in\llbracket1,\mu_\ell\rrbracket\\ \ell\in\llbracket 1,\kappa\rrbracket}} M_{i\ell}^{t_0 + t_1} \leq \alpha_0 }_{\coloneqq \Omega_1} \right) \geq \eta_0 \eta_1.
\end{equation*}
This again implies
\begin{equation*}
    \mathbb{P}\left( \bigcap_{\ell\in\llbracket 1,\kappa\rrbracket}\bigcap_{i\in\llbracket 1,\mu_\ell\rrbracket} \left\{ x_{i\ell}^{t_0 + 2t_1} \in \mathfrak{B}_\ell(\varepsilon) \right\}\, \Big| \,\Omega_1 \right) = 1.
\end{equation*}
We can repeat this procedure to obtain
\begin{equation*}
    \mathbb{P}\left(T_2\geq t_0+Lt_1\right)\geq \mathbb{P}\left( \max_{\substack{i\in\llbracket1,\mu_\ell\rrbracket\\ \ell\in\llbracket 1,\kappa\rrbracket}} M_{i\ell}^{t_0 + L t_1} \leq \alpha_0 \right)
    \geq \prod_{j=0}^L \eta_j,
\end{equation*}
where
\begin{equation*}
    \eta_j = 1 - n \left( \frac{(t_0 + j t_1)p}{\alpha_0} \right)^{\alpha_0}.
\end{equation*}
Now fix \( t > 1 \). Recalling that \( t_1 = \left\lfloor \varepsilon(2\gamma\mathsf{d}(\mathcal{K}))^{-1}\right\rfloor \) and \( t_0 = \left\lfloor \varepsilon(\gamma\mathsf{d}(\mathcal{K}))^{-1} \right\rfloor \), we choose the smallest integer \( L \geq1 \) such that
\begin{equation*}
    t \leq t_0 + L t_1,
\end{equation*}
namely
\begin{equation*}
    L = \left\lceil \frac{ t - \left\lfloor \frac{\varepsilon}{\gamma\mathsf{d}(\mathcal{K})} \right\rfloor }{ \left\lfloor \frac{\varepsilon}{2\gamma\mathsf{d}(\mathcal{K})} \right\rfloor } \right\rceil.
\end{equation*}
To estimate the final probability, we upper bound \( L \) by
\begin{equation*}
    L \leq \frac{2 \gamma\mathsf{d}(\mathcal{K})}{\varepsilon} t.
\end{equation*}
Hence,
\begin{align*}
    \prod_{j=0}^L \eta_j
    &= \prod_{j=0}^L \left( 1 - n \left( \frac{(t_0 + j t_1)p}{\alpha_0} \right)^{\alpha_0} \right) \\
    &\geq \left( 1 - n \left( \frac{(t_0 + L t_1)p}{\alpha_0} \right)^{\alpha_0} \right)^L \\
    &\geq 1 - L n \left( \frac{(t_0 + L t_1)p}{\alpha_0} \right)^{\alpha_0}
    + O\left( \left( n \left( \frac{(t_0 + L t_1)p}{\alpha_0} \right)^{\alpha_0} \right)^2 \right) \\
    &= 1 - \exp\left(
        \log L + \log n
        + \alpha_0 \log\left( \frac{t_0 + L t_1}{\alpha_0} \right)
        + \alpha_0 \log p
    \right)
    + O(f(p)^2),
\end{align*}
where
\[
f(p) \coloneqq n \left( \frac{(t_0 + L t_1)p}{\alpha_0} \right)^{\alpha_0}.
\]
All in all,
\begin{equation*}
   \mathbb{P}(T_2\geq t)\geq 1-\exp\left(
        \log L  + \log n 
        + \alpha_0 \log\left( \frac{t_0 + L t_1}{\alpha_0} \right)
        + \alpha_0 \log p +O(1)
    \right)
\end{equation*}
where $p$ is the upper bound of $1-p_{(i\ell)\to \ell}^t$ in \Cref{cl: group.cluster.bound}. This concludes the proof.\qed

\bibliographystyle{alpha}
\bibliography{refs}

\newcommand{\etalchar}[1]{$^{#1}$}
\begin{thebibliography}{WAW{\etalchar{+}}24}

\bibitem[AFZ24]{alcalde2024clustering}
Albert Alcalde, Giovanni Fantuzzi, and Enrique Zuazua.
\newblock Clustering in pure-attention hardmax transformers and its role in
  sentiment analysis.
\newblock {\em arXiv preprint arXiv:2407.01602}, 2024.

\bibitem[AG24]{adu2024approximate}
Daniel~Owusu Adu and Bahman Gharesifard.
\newblock Approximate controllability of continuity equation of transformers.
\newblock {\em IEEE Control Systems Letters}, 8:964--969, 2024.

\bibitem[AS25]{alman2025only}
Josh Alman and Zhao Song.
\newblock Only large weights (and not skip connections) can prevent the perils
  of rank collapse.
\newblock {\em arXiv preprint arXiv:2505.16284}, 2025.

\bibitem[AST25]{abellaconsensus}
{\'A}lvaro~Rodr{\'\i}guez Abella, Jo{\~a}o~Pedro Silvestre, and Paulo Tabuada.
\newblock Consensus is all you get: The role of attention in transformers.
\newblock In {\em Forty-second International Conference on Machine Learning},
  2025.

\bibitem[Bac24]{bach2024learning}
Francis Bach.
\newblock {\em Learning theory from first principles}.
\newblock MIT press, 2024.

\bibitem[BAG{\etalchar{+}}25]{barbero2025llms}
Federico Barbero, Alvaro Arroyo, Xiangming Gu, Christos Perivolaropoulos,
  Michael Bronstein, Petar Veli{\v{c}}kovi{\'c}, and Razvan Pascanu.
\newblock {Why do LLMs attend to the first token?}
\newblock {\em arXiv preprint arXiv:2504.02732}, 2025.

\bibitem[BDH16]{bovier2016metastability}
Anton Bovier and Frank Den~Hollander.
\newblock {\em Metastability: a potential-theoretic approach}, volume 351.
\newblock Springer, 2016.

\bibitem[BES24]{biswal2024identification}
Shiba Biswal, Karthik Elamvazhuthi, and Rishi Sonthalia.
\newblock Identification of mean-field dynamics using transformers.
\newblock {\em arXiv e-prints}, pages arXiv--2410, 2024.

\bibitem[BGK05]{bovier2005metastability}
Anton Bovier, V{\'e}ronique Gayrard, and Markus Klein.
\newblock Metastability in reversible diffusion processes ii: Precise
  asymptotics for small eigenvalues.
\newblock {\em Journal of the European Mathematical Society}, 7(1):69--99,
  2005.

\bibitem[BHK24]{bao2024self}
Han Bao, Ryuichiro Hataya, and Ryo Karakida.
\newblock Self-attention networks localize when qk-eigenspectrum concentrates.
\newblock {\em arXiv preprint arXiv:2402.02098}, 2024.

\bibitem[BKK{\etalchar{+}}25]{burger2025analysis}
Martin Burger, Samira Kabri, Yury Korolev, Tim Roith, and Lukas Weigand.
\newblock Analysis of mean-field models arising from self-attention dynamics in
  transformer architectures with layer normalization.
\newblock {\em Philosophical Transactions A}, 383(2298):20240233, 2025.

\bibitem[BPA25]{bruno2024emergence}
Giuseppe Bruno, Federico Pasqualotto, and Andrea Agazzi.
\newblock Emergence of meta-stable clustering in mean-field transformer models.
\newblock In {\em The Thirteenth International Conference on Learning
  Representations}, 2025.

\bibitem[BZM22]{brahma2022breaking}
Siddhartha Brahma, Polina Zablotskaia, and David Mimno.
\newblock Breaking {BERT}: evaluating and optimizing sparsified attention.
\newblock {\em arXiv preprint arXiv:2210.03841}, 2022.

\bibitem[CACP25]{castin2025unified}
Val{\'e}rie Castin, Pierre Ablin, Jos{\'e}~Antonio Carrillo, and Gabriel
  Peyr{\'e}.
\newblock A unified perspective on the dynamics of deep transformers.
\newblock {\em arXiv preprint arXiv:2501.18322}, 2025.

\bibitem[Cha96]{chan1996optimal}
Timothy~M Chan.
\newblock Optimal output-sensitive convex hull algorithms in two and three
  dimensions.
\newblock {\em Discrete \& computational geometry}, 16(4):361--368, 1996.

\bibitem[CHI{\etalchar{+}}25]{clarkson2025finding}
Kenneth~L Clarkson, Lior Horesh, Takuya Ito, Charlotte Park, and Parikshit Ram.
\newblock Finding clustering algorithms in the transformer architecture.
\newblock {\em arXiv preprint arXiv:2506.19125}, 2025.

\bibitem[CKLM19]{clark2019does}
Kevin Clark, Urvashi Khandelwal, Omer Levy, and Christopher~D Manning.
\newblock What does bert look at? an analysis of bert's attention.
\newblock {\em arXiv preprint arXiv:1906.04341}, 2019.

\bibitem[CKLS24]{calvello2024continuum}
Edoardo Calvello, Nikola~B Kovachki, Matthew~E Levine, and Andrew~M Stuart.
\newblock Continuum attention for neural operators.
\newblock {\em arXiv preprint arXiv:2406.06486}, 2024.

\bibitem[CLPR25]{chen2025quantitative}
Shi Chen, Zhengjiang Lin, Yury Polyanskiy, and Philippe Rigollet.
\newblock Quantitative clustering in mean-field transformer models.
\newblock {\em arXiv preprint arXiv:2504.14697}, 2025.

\bibitem[CLRS22]{cormen2022introduction}
Thomas~H Cormen, Charles~E Leiserson, Ronald~L Rivest, and Clifford Stein.
\newblock {\em Introduction to algorithms}.
\newblock MIT press, 2022.

\bibitem[CNQG24]{cowsik2024geometric}
Aditya Cowsik, Tamra Nebabu, Xiao-Liang Qi, and Surya Ganguli.
\newblock Geometric dynamics of signal propagation predict trainability of
  transformers.
\newblock {\em arXiv preprint arXiv:2403.02579}, 2024.

\bibitem[CRMB24]{criscitiello2024synchronization}
Christopher Criscitiello, Quentin Rebjock, Andrew~D McRae, and Nicolas Boumal.
\newblock Synchronization on circles and spheres with nonlinear interactions.
\newblock {\em arXiv preprint arXiv:2405.18273}, 2024.

\bibitem[DBK24]{dovonon2024setting}
Gb{\`e}tondji~JS Dovonon, Michael~M Bronstein, and Matt~J Kusner.
\newblock Setting the record straight on transformer oversmoothing.
\newblock {\em arXiv preprint arXiv:2401.04301}, 2024.

\bibitem[DCL21]{dong2021attention}
Yihe Dong, Jean-Baptiste Cordonnier, and Andreas Loukas.
\newblock Attention is not all you need: Pure attention loses rank doubly
  exponentially with depth.
\newblock In {\em International conference on machine learning}, pages
  2793--2803. PMLR, 2021.

\bibitem[DDZ{\etalchar{+}}24]{dai2024deepseekmoe}
Damai Dai, Chengqi Deng, Chenggang Zhao, RX~Xu, Huazuo Gao, Deli Chen, Jiashi
  Li, Wangding Zeng, Xingkai Yu, Yu~Wu, et~al.
\newblock Deepseekmoe: Towards ultimate expert specialization in
  mixture-of-experts language models.
\newblock {\em arXiv preprint arXiv:2401.06066}, 2024.

\bibitem[DYC{\etalchar{+}}24]{desai2024hashattention}
Aditya Desai, Shuo Yang, Alejandro Cuadron, Matei Zaharia, Joseph~E Gonzalez,
  and Ion Stoica.
\newblock Hashattention: Semantic sparsity for faster inference.
\newblock {\em arXiv preprint arXiv:2412.14468}, 2024.

\bibitem[EGPZ20]{esteve2020large}
Carlos Esteve, Borjan Geshkovski, Dario Pighin, and Enrique Zuazua.
\newblock Large-time asymptotics in deep learning.
\newblock {\em arXiv preprint arXiv:2008.02491}, 2020.

\bibitem[FH75]{fukunaga1975estimation}
Keinosuke Fukunaga and Larry Hostetler.
\newblock The estimation of the gradient of a density function, with
  applications in pattern recognition.
\newblock {\em IEEE Transactions on information theory}, 21(1):32--40, 1975.

\bibitem[Fil13]{filippov2013differential}
Aleksei~Fedorovich Filippov.
\newblock {\em Differential equations with discontinuous righthand sides:
  control systems}, volume~18.
\newblock Springer Science \& Business Media, 2013.

\bibitem[FW56]{frank1956algorithm}
Marguerite Frank and Philip Wolfe.
\newblock An algorithm for quadratic programming.
\newblock {\em Naval research logistics quarterly}, 3(1-2):95--110, 1956.

\bibitem[GBEK04]{gayrard2004metastability}
V{\'e}ronique Gayrard, Anton Bovier, Michael Eckhoff, and Markus Klein.
\newblock Metastability in reversible diffusion processes i: Sharp asymptotics
  for capacities and exit times.
\newblock {\em Journal of the European Mathematical Society}, 6(4):399--424,
  2004.

\bibitem[GG25]{giorlandino2025two}
Alessio Giorlandino and Sebastian Goldt.
\newblock Two failure modes of deep transformers and how to avoid them: a
  unified theory of signal propagation at initialisation.
\newblock {\em arXiv preprint arXiv:2505.24333}, 2025.

\bibitem[GKPR24]{geshkovski2024dynamic}
Borjan Geshkovski, Hugo Koubbi, Yury Polyanskiy, and Philippe Rigollet.
\newblock Dynamic metastability in the self-attention model.
\newblock {\em arXiv preprint arXiv:2410.06833}, 2024.

\bibitem[GLPR23]{geshkovski2023emergence}
Borjan Geshkovski, Cyril Letrouit, Yury Polyanskiy, and Philippe Rigollet.
\newblock The emergence of clusters in self-attention dynamics.
\newblock {\em Advances in Neural Information Processing Systems},
  36:57026--57037, 2023.

\bibitem[GLPR25]{geshkovski2025mathematical}
Borjan Geshkovski, Cyril Letrouit, Yury Polyanskiy, and Philippe Rigollet.
\newblock A mathematical perspective on transformers.
\newblock {\em Bulletin of the American Mathematical Society}, 62(3):427--479,
  2025.

\bibitem[GPD{\etalchar{+}}24]{gu2024attention}
Xiangming Gu, Tianyu Pang, Chao Du, Qian Liu, Fengzhuo Zhang, Cunxiao Du,
  Ye~Wang, and Min Lin.
\newblock When attention sink emerges in language models: An empirical view.
\newblock {\em arXiv preprint arXiv:2410.10781}, 2024.

\bibitem[GRRB24]{geshkovski2024measure}
Borjan Geshkovski, Philippe Rigollet, and Dom{\`e}nec Ruiz-Balet.
\newblock {Measure-to-measure interpolation using Transformers}.
\newblock {\em arXiv preprint arXiv:2411.04551}, 2024.

\bibitem[GRS24]{geshkovski2024number}
Borjan Geshkovski, Philippe Rigollet, and Yihang Sun.
\newblock {On the number of modes of Gaussian kernel density estimators}.
\newblock {\em arXiv preprint arXiv:2412.09080}, 2024.

\bibitem[GSQ25]{gang2025smarter}
Yue Gang, Jianhong Shun, and Mu~Qing.
\newblock Smarter fine-tuning: How lora enhances large language models.
\newblock 2025.

\bibitem[GZ22]{geshkovski2022turnpike}
Borjan Geshkovski and Enrique Zuazua.
\newblock {Turnpike in optimal control of PDEs, ResNets, and beyond}.
\newblock {\em Acta Numerica}, 31:135--263, 2022.

\bibitem[HS25]{huan2025fine}
Muchen Huan and Jianhong Shun.
\newblock {Fine-tuning transformers efficiently: A survey on LoRA and its
  impact}.
\newblock 2025.

\bibitem[HZX24]{hu2024depth}
Yunzhe Hu, Difan Zou, and Dong Xu.
\newblock An in-depth investigation of sparse rate reduction in
  transformer-like models.
\newblock {\em Advances in Neural Information Processing Systems},
  37:116815--116837, 2024.

\bibitem[Jag13]{jaggi2013revisiting}
Martin Jaggi.
\newblock Revisiting {Frank-Wolfe}: Projection-free sparse convex optimization.
\newblock In {\em International conference on machine learning}, pages
  427--435. PMLR, 2013.

\bibitem[Jar73]{jarvis1973identification}
Ray~A Jarvis.
\newblock On the identification of the convex hull of a finite set of points in
  the plane.
\newblock {\em Information processing letters}, 2(1):18--21, 1973.

\bibitem[KBH24]{koubbi2024impact}
Hugo Koubbi, Matthieu Boussard, and Louis Hernandez.
\newblock The impact of lora on the emergence of clusters in transformers.
\newblock {\em arXiv preprint arXiv:2402.15415}, 2024.

\bibitem[KLO25]{kan2025ot}
Kelvin Kan, Xingjian Li, and Stanley Osher.
\newblock Ot-transformer: a continuous-time transformer architecture with
  optimal transport regularization.
\newblock {\em arXiv preprint arXiv:2501.18793}, 2025.

\bibitem[KPR24]{karagodin2024clustering}
Nikita Karagodin, Yury Polyanskiy, and Philippe Rigollet.
\newblock Clustering in causal attention masking.
\newblock {\em Advances in Neural Information Processing Systems},
  37:115652--115681, 2024.

\bibitem[LJ16]{lacoste2016convergence}
Simon Lacoste-Julien.
\newblock Convergence rate of {Frank-Wolfe} for non-convex objectives.
\newblock {\em arXiv preprint arXiv:1607.00345}, 2016.

\bibitem[LLH{\etalchar{+}}19]{lu2019understanding}
Yiping Lu, Zhuohan Li, Di~He, Zhiqing Sun, Bin Dong, Tao Qin, Liwei Wang, and
  Tie-Yan Liu.
\newblock Understanding and improving transformer from a multi-particle dynamic
  system point of view.
\newblock {\em arXiv preprint arXiv:1906.02762}, 2019.

\bibitem[LMS23]{landim2023resolvent}
Claudio Landim, Diego Marcondes, and Insuk Seo.
\newblock A resolvent approach to metastability.
\newblock {\em Journal of the European Mathematical Society}, 27(4):1563--1618,
  2023.

\bibitem[MM25]{mehta2025functional}
Prashant Mehta and Sean Meyn.
\newblock Functional role of synchronization: A mean-field control perspective.
\newblock {\em Journal of Systems Science and Complexity}, 38(1):313--337,
  2025.

\bibitem[MT14]{motsch2014heterophilious}
Sebastien Motsch and Eitan Tadmor.
\newblock Heterophilious dynamics enhances consensus.
\newblock {\em SIAM review}, 56(4):577--621, 2014.

\bibitem[NAB{\etalchar{+}}22]{noci2022signal}
Lorenzo Noci, Sotiris Anagnostidis, Luca Biggio, Antonio Orvieto, Sidak~Pal
  Singh, and Aurelien Lucchi.
\newblock Signal propagation in transformers: Theoretical perspectives and the
  role of rank collapse.
\newblock {\em Advances in Neural Information Processing Systems},
  35:27198--27211, 2022.

\bibitem[NLH{\etalchar{+}}25]{nawrot2025sparse}
Piotr Nawrot, Robert Li, Renjie Huang, Sebastian Ruder, Kelly Marchisio, and
  Edoardo~M Ponti.
\newblock The sparse frontier: Sparse attention trade-offs in transformer llms.
\newblock {\em arXiv preprint arXiv:2504.17768}, 2025.

\bibitem[NNB23]{nguyen2023mitigating}
Tam Nguyen, Tan Nguyen, and Richard Baraniuk.
\newblock Mitigating over-smoothing in transformers via regularized nonlocal
  functionals.
\newblock {\em Advances in Neural Information Processing Systems},
  36:80233--80256, 2023.

\bibitem[OR07]{otto2007slow}
Felix Otto and Maria~G Reznikoff.
\newblock Slow motion of gradient flows.
\newblock {\em Journal of Differential Equations}, 237(2):372--420, 2007.

\bibitem[PRY25]{polyanskiy2025synchronization}
Yury Polyanskiy, Philippe Rigollet, and Andrew Yao.
\newblock Synchronization of mean-field models on the circle.
\newblock {\em arXiv preprint arXiv:2507.22857}, 2025.

\bibitem[PS12]{preparata2012computational}
Franco~P Preparata and Michael~I Shamos.
\newblock {\em Computational geometry: an introduction}.
\newblock Springer Science \& Business Media, 2012.

\bibitem[RZ24]{ruan2024towards}
Tianyu Ruan and Shihua Zhang.
\newblock Towards understanding how attention mechanism works in deep learning.
\newblock {\em arXiv preprint arXiv:2412.18288}, 2024.

\bibitem[SABP22]{sander2022sinkformers}
Michael~E Sander, Pierre Ablin, Mathieu Blondel, and Gabriel Peyr{\'e}.
\newblock Sinkformers: Transformers with doubly stochastic attention.
\newblock In {\em International Conference on Artificial Intelligence and
  Statistics}, pages 3515--3530. PMLR, 2022.

\bibitem[SGX{\etalchar{+}}22]{shi2022revisiting}
Han Shi, Jiahui Gao, Hang Xu, Xiaodan Liang, Zhenguo Li, Lingpeng Kong, Stephen
  Lee, and James~T Kwok.
\newblock Revisiting over-smoothing in bert from the perspective of graph.
\newblock {\em arXiv preprint arXiv:2202.08625}, 2022.

\bibitem[SHK{\etalchar{+}}25]{synk2025exploiting}
Ryan Synk, Monte Hoover, John Kirchenbauer, Neel Jain, Alex Stein, Manli Shu,
  Josue~Melendez Sanchez, Ramani Duraiswami, and Tom Goldstein.
\newblock Exploiting sparsity for long context inference: Million token
  contexts on commodity gpus.
\newblock {\em arXiv preprint arXiv:2502.06766}, 2025.

\bibitem[SPH{\etalchar{+}}24]{son2024prefixing}
Seungwoo Son, Wonpyo Park, Woohyun Han, Kyuyeun Kim, and Jaeho Lee.
\newblock Prefixing attention sinks can mitigate activation outliers for large
  language model quantization.
\newblock {\em arXiv preprint arXiv:2406.12016}, 2024.

\bibitem[SS19]{schlichting2019poincare}
Andr{\'e} Schlichting and Martin Slowik.
\newblock {Poincar{\'e} and logarithmic Sobolev constants for metastable Markov
  chains via capacitary inequalities}.
\newblock {\em The Annals of Applied Probability}, 29(6):3438--3488, 2019.

\bibitem[SS24]{shalova2024solutions}
Anna Shalova and Andr{\'e} Schlichting.
\newblock {Solutions of stationary McKean-Vlasov equation on a high-dimensional
  sphere and other Riemannian manifolds}.
\newblock {\em arXiv preprint arXiv:2412.14813}, 2024.

\bibitem[SWJS24]{scholkemper2024residual}
Michael Scholkemper, Xinyi Wu, Ali Jadbabaie, and Michael~T Schaub.
\newblock Residual connections and normalization can provably prevent
  oversmoothing in gnns.
\newblock {\em arXiv preprint arXiv:2406.02997}, 2024.

\bibitem[Tad21]{Tadmor2021Swarming}
Eitan Tadmor.
\newblock On the mathematics of swarming: Emergent behavior in alignment
  dynamics.
\newblock {\em Notices of the American Mathematical Society}, 68(4):493--503,
  2021.

\bibitem[TK25]{tomihari2025recurrent}
Akiyoshi Tomihari and Ryo Karakida.
\newblock Recurrent self-attention dynamics: An energy-agnostic perspective
  from {J}acobians.
\newblock {\em arXiv preprint arXiv:2505.19458}, 2025.

\bibitem[VFJ15]{vinyals2015pointer}
Oriol Vinyals, Meire Fortunato, and Navdeep Jaitly.
\newblock Pointer networks.
\newblock {\em Advances in neural information processing systems}, 28, 2015.

\bibitem[VGP{\etalchar{+}}25]{viswanathan2025geometry}
Karthik Viswanathan, Yuri Gardinazzi, Giada Panerai, Alberto Cazzaniga, and
  Matteo Biagetti.
\newblock The geometry of tokens in internal representations of large language
  models.
\newblock {\em arXiv preprint arXiv:2501.10573}, 2025.

\bibitem[VSP{\etalchar{+}}17]{vaswani2017attention}
Ashish Vaswani, Noam Shazeer, Niki Parmar, Jakob Uszkoreit, Llion Jones,
  Aidan~N Gomez, {\L}ukasz Kaiser, and Illia Polosukhin.
\newblock Attention is all you need.
\newblock {\em Advances in neural information processing systems}, 30, 2017.

\bibitem[WAW{\etalchar{+}}24]{wu2024role}
Xinyi Wu, Amir Ajorlou, Yifei Wang, Stefanie Jegelka, and Ali Jadbabaie.
\newblock On the role of attention masks and layernorm in transformers.
\newblock {\em Advances in Neural Information Processing Systems},
  37:14774--14809, 2024.

\bibitem[WDJ24]{wang2024zero}
Hongjie Wang, Bhishma Dedhia, and Niraj~K Jha.
\newblock Zero-tprune: Zero-shot token pruning through leveraging of the
  attention graph in pre-trained transformers.
\newblock In {\em Proceedings of the IEEE/CVF Conference on Computer Vision and
  Pattern Recognition}, pages 16070--16079, 2024.

\bibitem[WV24]{wu2024transformer}
Xinbo Wu and Lav~R Varshney.
\newblock Transformer-based causal language models perform clustering.
\newblock {\em arXiv preprint arXiv:2402.12151}, 2024.

\bibitem[YBP{\etalchar{+}}24]{JMLR:v25:23-1547}
Yaodong Yu, Sam Buchanan, Druv Pai, Tianzhe Chu, Ziyang Wu, Shengbang Tong, Hao
  Bai, Yuexiang Zhai, Benjamin~D. Haeffele, and Yi~Ma.
\newblock White-box transformers via sparse rate reduction: Compression is all
  there is?
\newblock {\em Journal of Machine Learning Research}, 25(300):1--128, 2024.

\bibitem[YS22]{yurtsever2022cccp}
Alp Yurtsever and Suvrit Sra.
\newblock {CCCP is Frank-Wolfe} in disguise.
\newblock {\em Advances in Neural Information Processing Systems},
  35:35352--35364, 2022.

\bibitem[ZKPR25]{zimin2025learning}
Aleksandr Zimin, Anastasiia Kutakh, Yury Polyanskiy, and Philippe Rigollet.
\newblock {Learning Gaussian Mixture Models via Transformer Measure Flows}.
\newblock In {\em ICML 2025 Workshop on Methods and Opportunities at Small
  Scale}, 2025.

\bibitem[ZLL{\etalchar{+}}23]{zhai2023stabilizing}
Shuangfei Zhai, Tatiana Likhomanenko, Etai Littwin, Dan Busbridge, Jason
  Ramapuram, Yizhe Zhang, Jiatao Gu, and Joshua~M Susskind.
\newblock Stabilizing transformer training by preventing attention entropy
  collapse.
\newblock In {\em International Conference on Machine Learning}, pages
  40770--40803. PMLR, 2023.

\end{thebibliography}

	\bigskip

\begin{minipage}[t]{.5\textwidth}
{\footnotesize{\bf Borjan Geshkovski}\par
  Inria \&
  Laboratoire Jacques-Louis Lions\par
  Sorbonne Université\par
  4 Place Jussieu\par
  75005 Paris, France\par
 \par
  e-mail: \href{mailto:borjan@mit.edu}{\textcolor{blue}{\scriptsize borjan.geshkovski@inria.fr}}
  }
\end{minipage}
\begin{minipage}[t]{.5\textwidth}
  {\footnotesize{\bf Domènec Ruiz-Balet}\par
  CEREMADE, UMR CNRS 7534\par
  Université Paris-Dauphine, Université PSL\par
  Pl. du Maréchal de Lattre de Tassigny\par
  75016 Paris, France\par
 \par
  e-mail: \href{mailto:blank}{\textcolor{blue}{\scriptsize domenec.ruiz-i-balet@dauphine.psl.eu}}
  }
\end{minipage}

\vspace{1em}

\begin{center}
\begin{minipage}[t]{.45\textwidth}
  {\footnotesize{\bf Albert Alcalde}\par
  FAU DCN-AvH\par
  Department of Mathematics\par
  Friedrich--Alexander--Universit\"at\par
  Erlangen--N\"urnberg\par
  Cauerstrasse 11\par
  91058 Erlangen, Germany\par
  \par
  e-mail: \href{mailto:albert.alcalde@fau.de}{\textcolor{blue}{\scriptsize albert.alcalde@fau.de}}
  }
\end{minipage}
\end{center}

\end{document}